\def\NAT@def@citea{\def\@citea{\NAT@separator}}
\theoremstyle{plain}
\newtheorem{theorem}{Theorem}[section]
\newtheorem{lemma}[theorem]{Lemma}
\newtheorem{corollary}[theorem]{Corollary}
\newtheorem{proposition}[theorem]{Proposition}
\theoremstyle{definition}
\newtheorem{definition}[theorem]{Definition}
\theoremstyle{remark}
\newtheorem{remark}{Remark}
\newtheorem{assumption}{Assumption}
\begin{document}
	
	
\title{Twice epi-differentiablity and parabolic regularity of a class of non-amenable functions}
	
\author{
 \name{Yulan Liu \textsuperscript{a} and Shaohua Pan\textsuperscript{b}\thanks{Corresponding author: Shaohua Pan (shhpan@scut.edu.cn)}}
 \affil{\textsuperscript{a}School of Mathematics and Statistics, Guangdong University of Technology, Guangzhou; \textsuperscript{b}School of Mathematics, South China University of Technology, Guangzhou, China}
 }
	
 \maketitle
	
 \begin{abstract}
 This paper concerns the twice epi-differentiability and parabolic regularity of a class of non-amenable functions, the composition of a piecewise twice differentiable (PWTD) function and a parabolically semidifferentiable mapping. Such composite functions often appear in composite optimization problems, disjunctive optimization problems, and low-rank and/or sparsity optimization problems. By establishing the proper twice epi-differentiability and parabolic epi-differentiability of PWTD functions, we prove the parabolic epi-differentiability of this class of composite functions, and its twice epi-differentiability under the parabolic regularity assumption. Then, we identify a condition to ensure its parabolic regularity with the help of an upper and lower estimate of its second subderivative, and demonstrate that this condition holds for several classes of specific non-amenable functions.
 
\end{abstract}
	
\begin{keywords}
 Non-amenable functions; twice epi-differentiability;  parabolic epi-differentiability; parabolic regularity; second subderivative
\end{keywords}
	
\begin{amscode}
 49J52; 90C46; 46G05
\end{amscode}

\section{Introduction}
Let $\mathbb{X}$ represent a finite dimensional real vector space endowed with the inner product $\langle \cdot,\cdot\rangle$ and its induced norm $\|\cdot\|$. We are interested in the composite function
 \begin{equation}\label{ffun}
 f(x):=\vartheta(F(x))\quad{\rm for}\ \ x\in\mathbb{X},	
 \end{equation}
 where $\vartheta\!:\mathbb{R}^m\to\overline{\mathbb{R}}\!:=\!\mathbb{R}\cup \{\infty\}$ and $F\!:\mathbb{X}\to\mathbb{R}^m$ satisfy the following basic assumption:
 \begin{assumption}\label{ass0}
 \begin{description}
 \item[(i)] the function $\vartheta$ is piecewise twice differentiable (PWTD), and is strictly continuous relative to its domain ${\rm dom}\,\vartheta\ne\emptyset$;  
 
 \item[(ii)] the mapping $F$ is parabolically semidifferentiable on an open set $\mathcal{O}\!\supset\! F^{-1}({\rm dom}\,\vartheta)$;   
 \end{description}
 \end{assumption} 
 \noindent
 Here we adopt ``strictly continuous'' rather than ``locally Lipschitz continuous'' because this paper adopts the notation as used in \cite{RW98}. According to  Definition \ref{Def-PTD} below, we assume that ${\rm dom}\,\vartheta=\bigcup_{i=1}^sC_i$ for polyhedral sets $C_1,\ldots,C_{s}$, and on each $C_i$, $\vartheta$ equals a function $\vartheta_i$ that is twice differentiable on an open superset of $C_i$.
 
 The function $f$ defined in \eqref{ffun} often appears in the composite optimization problem 
 \begin{equation}\label{prob}
 \min_{x\in\mathbb{X}}\Phi(x):=f_0(x)+f(x),
 \end{equation}
 where $f_0\!:\mathbb{X}\to\overline{\mathbb{R}}$ is a lower semicontinuous (lsc) function that is twice differentiable on an open set containing $F^{-1}({\rm dom}\,\vartheta)$. Such a composite optimization problem covers major classes of constrained optimization problems such as classical nonlinear programming, second-order cone and semidefinite programming (see \cite{BS00}), eigenvalue optimization problems (see \cite{Overton96,Song15,Torki99}), and amenable composite optimization problems \cite{Roc89}. As the outer $\vartheta$ is not required to be convex, some new problems are also absorbed in model \eqref{prob} such as disjunctive programs \cite{Balas18,Gfrerer14} and composite problems arising from low-rank and/or sparsity optimization. For example, the loss function $\psi(\mathcal{A}(UV^{\top}\!))$ involved in the  factorization form of low-rank optimization problems precisely has the form of \eqref{ffun}, where $\psi\!:\mathbb{R}^m\to\mathbb{R}$ is a piecewise linear-quadratic (PWLQ) function such as the SCAD or MCP function (see \cite{Fan01,Zhang10}) and $\mathcal{A}\!:\mathbb{R}^{n_1\times n_2}\to\mathbb{R}^m$ is a sampling operator. In addition, the group SCAD and MCP functions in group sparsity optimization \cite{Guo15,Ogutu14}, which are shown to be the equivalent DC surrogates of group zero-norm in \cite{LiuBiPan18}, also take the form of \eqref{ffun} with a parabolically semidifferentiable $F$.
 
 Composite functions of form \eqref{ffun} constitute a convenient framework in variational analysis and continuous optimization for developing theories and algorithms of constrained and composite optimization. Standard assumptions under which this class of functions was investigated and applied in constrained optimization require that the inner mapping $F$ is twice continuously differentiable, that the outer function $\vartheta$ is lsc and convex, and that the epigraphical multifunction associated with $\vartheta$ is metrically regular around the point of of interest (see \cite{RW98}). Compared with metric regularity, metric subregularity of a multifunction	or, equivalently, calmness of its inverse mapping has more important application scenarios where its robust counterpart is out of reach. We refer the reader to the reference \cite{Aragon14,Gfrerer11,Ioffe08,Gfrerer16,Kruger,Ye97,Li12,Zheng10} for various developments on metric subregularity and calmness, and their applications to optimality conditions, error bounds, and convergence of algorithms. Inspired by this, Mohammadi et al. \cite{MohMS22-MOR} carried out the first-order and second-order variational anaysis for a class of fully subamenable functions of the form \eqref{ffun}, which are the same as fully amenable functions \cite{Roc89} except that metric regularity of epigraphical multifunctions required by the latter is replaced by metric subregularity qualification condition (MSQC) for system $F(x)\in{\rm dom}\,\vartheta$ at the point of interest. 
 
 Twice epi-differentiability of extended real-valued functions, as a carrier of vital second-order information, plays a significant role in second-order variational analysis of nonconvex and nonsmooth optimization, but its proof has been well recognized to be extremely difficult. Since Rockafellar's landmark paper \cite{Roc89} where this property was proved for the fully amenable functions, there are few works on this topic except \cite{Ioffe91,Cominetti91,Levy01}. The papers \cite{Ioffe91,Cominetti91} extended Rockafellar's results to the composition functions of form \eqref{ffun} with a proper lsc convex $\vartheta$ and a twice differentiable $F$, but required a restrictive assumption on the second subderivative that does not hold for constrained optimization problems; while the work \cite{Levy01} obtained upper and lower estimates for the second subderivative but did not discuss the twice epi-differentiability. Until recently, Mohammadi et al. \cite{MohMS21-TAMS,MohS20-SIOPT,MohMS22-MOR} conducted a systematic study for the twice epi-differentiability of this class of compositions under MSQCs by leveraging their parabolic epi-differentiability and regularity. In \cite{MohMS21-TAMS}, to achieve the twice epi-differentiability of the indicator function of the set $\Omega=g^{-1}(\Theta)$ where $g\!:\mathbb{R}^n\to\mathbb{R}^m$ is a mapping that is twice differentiable at the point of interest, and $\Theta\subset\mathbb{R}^m$ is a closed convex set, they fully analyzed parabolic regularity for constraint system $\Omega\cap\mathcal{O}=\big\{x\in\mathcal{O}\,|\,g(x)\in\Theta\}$ where $\mathcal{O}$ is a neighborhood of the point of interest. Later, they proved in \cite{MohS20-SIOPT} the twice epi-differentiability of the composite functions of form \eqref{ffun} under parabolic regularity by assuming that the outer $\vartheta$ is proper, lsc, convex, and strictly continuous relative to its domain. Benko and Mehiltz \cite{Benko22Arxiv} studied a chain rule and a marginal function rule for the second subderivative, which under the directional calmness of $F$ yield lower estimates for the second derivative of $f$ with an lsc $\vartheta$ and a continuous $F$,  but did not touch its twice epi-differentiability.

 This work aims to establish the twice epi-differentiability for a class of non-amenable functions, i.e., the composition \eqref{ffun} with  $\vartheta$ and $F$ satisfying Assumption \ref{ass0}. Although the domain of the outer $\vartheta$ possesses a certain polyhedrality, the associated optimization model \eqref{prob} still involves non-polyhedral conic optimization problems because the inner $F$ is not required to be differentiable. To attain this goal, we conduct in Section \ref{sec3} a systematic study for the second-order variational properties of PWTD functions, and establish their proper twice epi-differentiability and parabolic epi-differentiability. Based on these properties of the outer $\vartheta$, in Section \ref{sec4}, we obtain the parabolic epi-differentiability of this class of non-amenable functions, and establish its proper twice epi-differentiability under a parabolic regularity assumption. In Section \ref{sec5}, we identify a condition to ensure the parabolic regularity of this class of non-amenable functions with the help of an upper and lower estimate for their second subderivatives, and demonstrate that the condition holds for several specific classes of non-amenable functions. To the best of our knowledge, this work is the first to explore the twice epi-differentiability of non-amenable functions. Although Mohammadi \cite{Moh22-archive} studied their first-order variational properties, he did not discuss their second-order properties.

 \medskip
 \noindent
 {\bf Notation.}  For an extended real-valued function $h\!:\mathbb{X}\!\to\! \overline{\mathbb{R}}$, denote its domain by  ${\rm dom}\,h:=\{x\in\mathbb{X}\,|\, h(x)<\infty\}$, its epigraph by ${\rm epi}\,h\!:=\{(x,\alpha)\in\mathbb{X}\times\mathbb{R}\ |\ h(x)\le\alpha\}$, and its conjugate by $h^*$, i.e. $h^*(x^*):=\sup_{x\in\mathbb{X}}\big\{\langle x,x^*\rangle-h(x)\big\}$. Such $h$ is proper if $h(x)>-\infty$ for all $x\in\mathbb{X}$ and ${\rm dom}\,h\ne\emptyset$. If a mapping $g\!:\mathbb{X}\to\mathbb{R}^m$ is differentiable at $\overline{x}\in\mathbb{X}$,
 $\nabla g(\overline{x})$ denotes the transpose of $g'(\overline{x})$, the Jacobian of $g$ at $\overline{x}$, and if $g$ is twice differentiable at $\overline{x}$, $\nabla^2g(\overline{x})$ denotes its second-order differential mapping at $\overline{x}$. For a closed set $S\subset\mathbb{X}$, $\delta_{S}$ denotes the indicator function of $S$, i.e., $\delta_S(x)=0$ if $x\in S$, otherwise $\delta_S(x)=\infty$; and ${\rm dist}(x,S)$ denotes the distance of $x$ from $S$ on the norm $\|\cdot\|$. Let $\mathbb{B}_{\mathbb{X}}$ denote the unit ball centered at the origin of $\mathbb{X}$, and  $\mathbb{B}(x,\varepsilon)$ denotes the closed ball of radius $\varepsilon$ centered at $x\in\mathbb{X}$ on the norm $\|\cdot\|$.   For an integer $k\ge 1$, write $[k]:=\{1,\ldots,k\}$. For every $q\in (1,\infty)$, $\|\cdot\|_q$ represents the $\ell_q$-norm of vectors in $\mathbb{R}^n$. For a closed set $C\subset\mathbb{R}^m$ and a point $y\in\mathbb{R}^m$, ${\rm dist}_{2}(y,C)$ means the distance of $y$ from $C$ on the $\ell_2$-norm. Unless otherwise stated, we always write $F(x):=(F_1(x),\ldots,F_m(x))^{\top}$, and for each $y=(y_1,\ldots,y_m)^{\top}\in\mathbb{R}^m$, define the function $(yF)\!:\mathbb{X}\to\mathbb{R}$ by $(yF)(x):=\langle y,F(x)\rangle=\sum_{i=1}^my_iF_i(x)$.

 \section{Preliminaries}
 This section includes some basic concepts on variational analysis (see the monographs \cite{RW98,Mordu18} for more details) and preliminary results that will be used later. 
 \begin{definition}
  (see \cite[Definition 8.3]{RW98}) Consider a function $h\!:\mathbb{X}\to\overline{\mathbb{R}}$ and a point $x\in{\rm dom}\,h$. The regular subdifferential of $h$ at $x$ is defined as
  \[
   \widehat{\partial}h(x):=\bigg\{v\in\mathbb{X}\ |\ 
   \liminf_{x\ne x'\to x}\frac{h(x')-h(x)-\langle v,x'-x\rangle}{\|x'-x\|}\ge 0\bigg\},
  \]
  and the basic (also known as the limiting or Morduhovich) subdifferential of $h$ at $x$ is defined as
  \[
 	\partial h(x):=\Big\{v\in\mathbb{X}\ |\ \exists\,x^k\to x\ {\rm with}\ h(x^k)\to h(x)\ {\rm and}\
 	v^k\in\widehat{\partial}h(x^k)\ {\rm s.t.}\ v^k\to v\Big\}.
 \]
 \end{definition}
 
 When $h=\delta_{S}$ for a nonempty closed set $S\subset\mathbb{X}$, its regular subdifferential becomes the regular normal cone to $S$ at $x$, denoted by $\widehat{\mathcal{N}}_{S}(x)$; and its subdifferential reduces to the normal cone to $S$ at $x$, denoted by $\mathcal{N}_{S}(x)$. Furthermore, when $\widehat{\mathcal{N}}_{S}(x)=\mathcal{N}_{S}(x)$, the set $S$ is said to be (Clarke) regular at $x$.
  
 For a multifunction $\mathcal{F}\!:\mathbb{X}\rightrightarrows\mathbb{R}^m$, we denote by  $\mathcal{F}^{-1}(y):=\{x\in \mathbb{X}\ |\ y\in \mathcal{F}(x)\}$ its inverse mapping, and by ${\rm gph}\mathcal{F}:=\{(x,y)\in \mathbb{X}\times \mathbb{R}^m\ |\ y\in \mathcal{F}(x)\}$ its graph. Recall that $\mathcal{F}$ is said to have the (metric) subregularity property with modulus $\kappa>0$ at a point $(\overline{x},\overline{y})\in{\rm gph}\mathcal{F}$ if there exists $\varepsilon>0$ such that for all $x\in\mathbb{B}(\overline{x},\varepsilon)$,
 \[
  {\rm dist}(x,\mathcal{F}^{-1}(\overline{y}))\le\kappa\,{\rm dist}_2(\overline{y},\mathcal{F}(x)).
 \] 
 To characterize the tangent cones and second-order tangent sets to ${\rm dom}f$ later, we need the MSQC for constraint system $F(x)\in{\rm dom}\,\vartheta$, a mild condition also used in \cite{Gfrerer13,Gfrerer16-SIOPT,Benko22,Ioffe08}.
 \begin{definition}
  The MSQC is said to hold for system $F(x)\in{\rm dom}\,\vartheta$ at a point $\overline{x}\in{\rm dom}\,f$ if the multifunction $\mathcal{F}(x)\!:=F(x)-{\rm dom}\,\vartheta$ is metrically subregular at $(\overline{x},0)$.
 \end{definition} 
 \subsection{Subderivative and tangent cones}\label{sec2.1}
 
 Before stating the formal definition of subderivative of a function, we recall the epi-convergence of a sequence of functions $\{h^k\}_{k\in\mathbb{N}}$ on $\mathbb{X}$. 
 \begin{definition}(see \cite[Definition 7.1]{RW98})
 For any sequence $\{h^k\}_{k\in\mathbb{N}}$ of functions on $\mathbb{X}$,
  the lower epi-limit, denoted by e-$\liminf_{k}h^{k}$, is the function having $\limsup_{k}({\rm epi}\,h^{k})$ as its epigraph, i.e.,
 ${\rm epi}\,(\textrm{e-}{\textstyle\liminf_{k}}h^{k}\big)
  :={\textstyle\limsup_{k}}({\rm epi}\,h^{k})$; and the upper epi-limit, denoted by e-$\limsup_{k}h^{k}$, is the function having $\liminf_{k}({\rm epi}\,h^{k})$ as its epigraph, i.e., ${\rm epi}\,(\textrm{e-}{\textstyle\limsup_{k}}h^{k}\big):={\textstyle\liminf_{k}}({\rm epi}\,h^{k})$. When these two functions coincide, the epi-limit function e-$\lim_{k}h^{k}$ is said to exist with $h:=\textrm{e-}{\textstyle\lim_{k}}h^{k}:=\textrm{e-}{\textstyle\limsup_{k}}h^{k}=\textrm{e-}{\textstyle\liminf_{k}}h^{k}$. In this event, $h^{k}$ is said to epi-converge to $h$. Clearly, $h^{k}\xrightarrow[]{e}h$ if and only if $ 	{\rm epi}\,h^{k}\to {\rm epi}\,h$.
  \end{definition}
 \begin{definition}\label{sderive-def}
  (see \cite[Definitions 8.1 \& 7.20]{RW98}) Consider a function $h\!:\mathbb{X}\to\mathbb{\overline{R}}$ and a point $x\in{\rm dom}\,h$. Let $\Delta_{\tau}h(x)\!:\mathbb{X}\to\overline{\mathbb{R}}$ be the first-order difference quotients of $h$ at $x$:
  \[
 	\Delta_{\tau}h(x)(w'):=\tau^{-1}\big[h(x+\tau w')-h(x)\big]\quad{\rm for}\ \tau>0.
  \] 
  The subderivative function $dh(x)\!:\mathbb{X}\to[-\infty,\infty]$ of $h$ at $x$ is defined as
  \[
 	dh(x)(w):=\liminf_{\tau\downarrow 0,w'\to w}\Delta_{\tau}h(x)(w')\quad{\rm for}\ w\in \mathbb{X}.
  \]
  The function $h$ is called (properly) epi-differentiable at $x$ if $\Delta_{\tau}h(x)$ epi-converge to the (proper) function $dh(x)$ as $\tau\downarrow 0$. The function $h$ is said to be semidifferentiable at $x$ for $w$ if the limit $\lim_{\tau\downarrow 0,w'\to w} \Delta_{\tau}h(x)(w')$ exists and is finite, and it is the semiderivative of $h$ at $x$ for $w$; and if this holds for any $w$, $h$ is semidifferentiable at $x$.
 \end{definition}
 
   We say that a mapping $g\!:\mathbb{X}\to\mathbb{R}^m$ is semidifferentiable at $x$ for $w\in\mathbb{X}$ if its each component function $g_i\!:\mathbb{X}\to\mathbb{R}$ is semidifferentiable at $x$ for $w$, and now we denote by $dg(x)(w):=(dg_1(x)(w),\ldots,dg_m(x)(w))^{\top}$ the semiderivative of $g$ at $x$ for $w$. The mapping $g$ is said to be semidifferentiable at $x$ if it is semidifferentiable at $x$ for every $w\in\mathbb{X}$. By \cite[Theorem 7.21 (e)]{RW98}, one can prove that the semidifferentiability of $g$ at $x$ implies the camlness of $g$ at $x$, so the calmness of $g$ at $x$ in direction $w$ \cite[Section 2.1]{Benko22Arxiv}.
 
 With the subderivative function, we follow the same line as in \cite{MohMS21-TAMS} to introduce the critical cone to an extended real-valued $h\!:\mathbb{X}\to\overline{\mathbb{R}}$ at a point $(x,v)\in{\rm gph}\,\partial h$:
 \begin{equation*}
 \mathcal{C}_{h}(x,v):=\big\{w\in\mathbb{X}\ |\ dh(x)(w)=\langle v,w\rangle\big\}.
 \end{equation*}  
  When $h=\delta_{S}$ for a nonempty closed set $S\subset\mathbb{X}$, the subderivative $dh(x)$ for $x\in{\rm dom}\,h$ is the indicator of $\mathcal{T}_{S}(x)$, the tangent cone to $S$ at $x$, and now $\mathcal{C}_{h}(x,v)=\mathcal{T}_{S}(x)\cap\{v\}^{\perp}$. The tangent and inner tangent cones to $S$ at $x\in S$ are respectively defined as
 \begin{align*}
 \mathcal{T}_{S}(x)&:=\big\{w\in\mathbb{X}\ |\ \exists\, \tau_k\downarrow 0\ \ {\rm s.t.}\ \ {\rm dist}(x+\tau_kw,S)=o(\tau_k)\big\},\\
 \mathcal{T}_{S}^{i}(x)&:=\big\{w\in\mathbb{X}\ |\ {\rm dist}(x+\tau w,S)=o(\tau)\ \ \forall\tau\ge 0\big\}.
 \end{align*}
 We stipulate that $\mathcal{T}_{S}(x)\!=\!\mathcal{T}^i_{S}(x)\!=\!\emptyset$ if $x\!\notin\! S$. 
 The following lemma characterizes the tangent cone to ${\rm dom}\,f$, where $dF(\overline{x})(w)$ is well defined because the parabolic semidifferentiability of $F$ by Assumption \ref{ass0} implies its semidifferentiability by Definition \ref{psemi-deriv}.
 \begin{lemma}\label{tcone-domf}
 Let $\overline{x}\in{\rm dom}\,f$. If the MSQC holds for system $F(x)\!\in\!{\rm dom}\vartheta$ at $\overline{x}$, then
  \begin{align}\label{equa-tcone}
  \mathcal{T}_{{\rm dom}\,f}(\overline{x})
   &=\big\{w\in\mathbb{X}\ |\ dF(\overline{x})(w)\in\mathcal{T}_{{\rm dom}\,\vartheta}(F(\overline{x}))\big\}\nonumber\\
   &=\Big\{w\in\mathbb{X}\ |\ dF(\overline{x})(w)\in\bigcup\limits_{i=1}^s\mathcal{T}_{C_i}(F(\overline{x}))\Big\}=\mathcal{T}_{{\rm dom}\,f}^{i}(\overline{x}).
  \end{align} 	
 \end{lemma}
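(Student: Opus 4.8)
The plan is to prove \eqref{equa-tcone} via the cyclic chain of inclusions
\[
\mathcal{T}^{i}_{{\rm dom}\,f}(\overline{x})\subseteq\mathcal{T}_{{\rm dom}\,f}(\overline{x})\subseteq\big\{w\in\mathbb{X}\ \big|\ dF(\overline{x})(w)\in{\textstyle\bigcup_{i=1}^{s}}\mathcal{T}_{C_i}(F(\overline{x}))\big\}\subseteq\mathcal{T}^{i}_{{\rm dom}\,f}(\overline{x}),
\]
together with the elementary identity $\mathcal{T}_{{\rm dom}\,\vartheta}(F(\overline{x}))=\bigcup_{i=1}^{s}\mathcal{T}_{C_i}(F(\overline{x}))$, which then supplies the remaining equality in \eqref{equa-tcone}. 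The first inclusion is always valid. The identity is standard: since ${\rm dom}\,\vartheta=\bigcup_{i=1}^{s}C_i$ is a finite union of closed sets, passing along any realizing sequence to a subsequence lying in a single $C_i$ shows that the tangent cone to the union equals the union of the cones $\mathcal{T}_{C_i}(F(\overline{x}))$ over indices $i$ with $F(\overline{x})\in C_i$, while $\mathcal{T}_{C_i}(F(\overline{x}))=\emptyset$ for the other $i$ by the stated convention.

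For the second inclusion, put $\overline{y}:=F(\overline{x})$ and take $w\in\mathcal{T}_{{\rm dom}\,f}(\overline{x})$, so there are $\tau_k\downarrow0$ and $w^k\to w$ with $F(\overline{x}+\tau_k w^k)\in{\rm dom}\,\vartheta$. Passing to a subsequence, $F(\overline{x}+\tau_k w^k)\in C_{i_0}$ for some fixed $i_0$ and all $k$; as $F$ is continuous at $\overline{x}$ (a consequence of the semidifferentiability of $F$ at $\overline{x}$, which holds by Assumption \ref{ass0}(ii) and Definition \ref{psemi-deriv}) and $C_{i_0}$ is closed, we get $\overline{y}\in C_{i_0}$. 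Writing $v^k:=\tau_k^{-1}[F(\overline{x}+\tau_k w^k)-\overline{y}]$, semidifferentiability gives $v^k\to dF(\overline{x})(w)$ while $\overline{y}+\tau_k v^k=F(\overline{x}+\tau_k w^k)\in C_{i_0}$; hence $dF(\overline{x})(w)\in\mathcal{T}_{C_{i_0}}(\overline{y})\subseteq\bigcup_{i=1}^{s}\mathcal{T}_{C_i}(\overline{y})$.

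The third inclusion is the crucial one and is where the MSQC is used. Fix $w$ with $u:=dF(\overline{x})(w)\in\mathcal{T}_{C_{i_0}}(\overline{y})$ for some $i_0$ with $\overline{y}\in C_{i_0}$. Since $C_{i_0}$ is polyhedral, its tangent cone at $\overline{y}$ coincides with its radial cone, so there is $\bar{t}>0$ with $\overline{y}+tu\in C_{i_0}\subseteq{\rm dom}\,\vartheta$ for all $t\in[0,\bar{t}]$, whence ${\rm dist}_2(\overline{y}+\tau u,{\rm dom}\,\vartheta)=0$ for $\tau\in[0,\bar{t}]$. By the MSQC the multifunction $\mathcal{F}(x)=F(x)-{\rm dom}\,\vartheta$ is metrically subregular at $(\overline{x},0)$; since $\mathcal{F}^{-1}(0)={\rm dom}\,f$ and ${\rm dist}_2(0,\mathcal{F}(x))={\rm dist}_2(F(x),{\rm dom}\,\vartheta)$, there are $\kappa,\varepsilon>0$ such that ${\rm dist}(x,{\rm dom}\,f)\le\kappa\,{\rm dist}_2(F(x),{\rm dom}\,\vartheta)$ whenever $x\in\mathbb{B}(\overline{x},\varepsilon)$. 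Applying this with $x=\overline{x}+\tau w$ for small $\tau>0$, and using $\overline{y}+\tau u\in{\rm dom}\,\vartheta$ together with $\|F(\overline{x}+\tau w)-\overline{y}-\tau u\|_2=\|F(\overline{x}+\tau w)-F(\overline{x})-\tau\,dF(\overline{x})(w)\|_2=o(\tau)$ from semidifferentiability, one obtains
\[
{\rm dist}(\overline{x}+\tau w,{\rm dom}\,f)\le\kappa\,{\rm dist}_2(F(\overline{x}+\tau w),{\rm dom}\,\vartheta)\le\kappa\,\|F(\overline{x}+\tau w)-\overline{y}-\tau u\|_2=o(\tau),
\]
so $w\in\mathcal{T}^{i}_{{\rm dom}\,f}(\overline{x})$, which closes the chain and proves \eqref{equa-tcone}.

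I expect this last step to be the main obstacle: one must upgrade a purely qualitative tangency condition on $u=dF(\overline{x})(w)$ into a \emph{uniform-in-$\tau$} estimate ${\rm dist}(\overline{x}+\tau w,{\rm dom}\,f)=o(\tau)$. The polyhedrality of the pieces $C_i$ is precisely what forces $\overline{y}+\tau u$ to lie \emph{exactly} in ${\rm dom}\,\vartheta$ for all small $\tau\ge0$ (not just along a sequence), and the MSQC is precisely what lets this property be pulled back through the semidifferentiable $F$ at the cost of only an $o(\tau)$ term; without either ingredient one would in general recover only the outer tangent cone $\mathcal{T}_{{\rm dom}\,f}(\overline{x})$ and not its coincidence with $\mathcal{T}^{i}_{{\rm dom}\,f}(\overline{x})$.
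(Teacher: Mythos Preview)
Your proof is correct and follows essentially the same approach as the paper: both use polyhedrality of the pieces $C_i$ to guarantee that $\mathcal{T}_{{\rm dom}\,\vartheta}(F(\overline{x}))=\mathcal{T}^{i}_{{\rm dom}\,\vartheta}(F(\overline{x}))$, then combine semidifferentiability of $F$ with the MSQC to obtain the $o(\tau)$ estimate yielding $w\in\mathcal{T}^{i}_{{\rm dom}\,f}(\overline{x})$. The only organizational difference is that the paper invokes \cite[Proposition~1]{Herion05} for the first equality in \eqref{equa-tcone}, whereas you close the argument self-containedly via the cyclic chain and prove the inclusion $\mathcal{T}_{{\rm dom}\,f}(\overline{x})\subseteq\{w\mid dF(\overline{x})(w)\in\mathcal{T}_{{\rm dom}\,\vartheta}(F(\overline{x}))\}$ directly from semidifferentiability; the crucial third step is identical in both.
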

 \begin{proof}
 Since the MSQC holds for system $F(x)\!\in\!{\rm dom}\,\vartheta$ at $\overline{x}$, the first equality is implied by \cite[Proposition 1]{Herion05}. From ${\rm dom}\,\vartheta\!=\!\bigcup_{i=1}^s C_i$ and \cite[Proposition 3.37]{BS00}, it follows that
  \[
   \bigcup\limits_{i=1}^s\mathcal{T}_{C_i}^{i}(F(\overline{x}))
    \subset\mathcal{T}_{{\rm dom}\,\vartheta}^i(F(\overline{x}))
     \subset\mathcal{T}_{{\rm dom}\,\vartheta}(F(\overline{x}))
    =\bigcup\limits_{i=1}^s\mathcal{T}_{C_i}(F(\overline{x}))
   =\bigcup\limits_{i=1}^s\mathcal{T}_{C_i}^{i}(F(\overline{x})),
  \]
  which implies that the second equality holds and $\mathcal{T}_{{\rm dom}\,\vartheta}^i(F(\overline{x}))=\mathcal{T}_{{\rm dom}\,\vartheta}(F(\overline{x}))$. Along with the first equality in \eqref{equa-tcone} and $\mathcal{T}_{{\rm dom}f}^{i}(\overline{x})\subset\mathcal{T}_{{\rm dom}f}(\overline{x})$, the rest only needs to prove that
  \begin{equation}\label{aim-equa-tcone}
  \big\{w\in\mathbb{X}\ |\ dF(\overline{x})(w)\in\mathcal{T}^i_{{\rm dom}\,\vartheta}(F(\overline{x}))\big\}\subset\mathcal{T}_{{\rm dom}\,f}^{i}(\overline{x}).
  \end{equation}
  Pick any $w\in\mathbb{X}$ with $dF(\overline{x})(w)\in\mathcal{T}^i_{{\rm dom}\,\vartheta}(F(\overline{x}))$. By the definition of the inner tangent cone, for any sufficiently small $\tau>0$, ${\rm dist}_2(F(\overline{x})\!+\!\tau dF(\overline{x})(w),{\rm dom}\,\vartheta)=o(\tau)$. Recall that the mapping $F$ is semidifferentiable at $\overline{x}$ for $w$. By Definition \ref{sderive-def}, $F(\overline{x}+\tau w)-F(\overline{x})-\tau dF(\overline{x})(w)=o(\tau)$ and then ${\rm dist}_2(F(\overline{x}+\tau w),{\rm dom}\,\vartheta)=o(\tau)$. In addition, since the MSQC holds for system $F(x)\in{\rm dom}\,\vartheta$ at $\overline{x}$, for each sufficiently small $\tau>0$, there exist $x_{\tau}\in{\rm dom}\,f$ and $\kappa>0$ such that $\|\overline{x}+\tau w-x_{\tau}\|\le \kappa{\rm dist}_2(F(\overline{x}\!+\!\tau w),{\rm dom}\,\vartheta)$. The two sides show that ${\rm dist}(\overline{x}\!+\!\tau w,{\rm dom}\,f)=o(\tau)$ and $w\in \mathcal{T}^{i}_{{\rm dom}\,f}(\overline{x})$. Consequently, the inclusion in \eqref{aim-equa-tcone} holds.
\end{proof}
 
 According to \cite[Exercise 8.4]{RW98}, for a function $h\!:\mathbb{X}\to\mathbb{\overline{R}}$ and a point $x\in{\rm dom}\,h$, there is a close relation between its subderivative at $x$ and its regular subdifferential at $x$, i.e., 
 \begin{equation}\label{Rsdiff-sderiv}
  v\in\widehat{\partial}h(x)\ \Longleftrightarrow\ dh(x)(w)\ge\langle v,w\rangle\quad {\rm for\ all}\ w\in\mathbb{X}.
 \end{equation}
 
  \subsection{Second and parabolic subderivatives} 
 
 To introduce two kinds of second subderivatives for a function $h\!:\mathbb{X}\to\mathbb{\overline{R}}$ at a point $x\in{\rm dom}\,h$, we need two types of second-order difference quotients for $h$ at $x$: 
  \begin{align*}
  \Delta^2_{\tau}h(x)(w')
  :=\frac{h(x\!+\!\tau w')-h(x)-\tau dh(x)(w')}{\tau^2/2}\quad{\rm for}\ \tau>0,\\
  \Delta^2_{\tau}h(x|v)(w')
  :=\frac{h(x\!+\!\tau w')-h(x)-\tau\langle v,w'\rangle}{\tau^2/2}\quad{\rm for}\ \tau>0,
 \end{align*}
 where $\Delta^2_{\tau}h(x)(w'):=\infty$ whenever $h(x\!+\!\tau w')=dh(x)(w')=\infty$ or $-\infty$. 
 \begin{definition}\label{ssderiv-def}
 (see \cite[Definitions 13.3 \& 13.6]{RW98}) Consider a proper function $h\!:\mathbb{X}\to\mathbb{\overline{R}}$, a point $x\in{\rm dom}\,h$ and a vector $v\in\mathbb{X}$. The second subderivative of $h$ at $x$ for $v$ and $w$ is defined as
 \[
   d^2h(x|v)(w):=\liminf_{\tau\downarrow 0,w'\to w}\Delta^2_{\tau}h(x|v)(w'),
 \]
 while the second subderivative of $h$ at $x$ for $w$ (without mention of $v$) is defined as
 \[
  d^2h(x)(w):=\liminf_{\tau\downarrow 0,w'\to w}\Delta^2_{\tau}h(x)(w').
 \]
 The function $h$ is said to be (properly) twice epi-differentiable at $x$ for $v$ if $\Delta^2_{\tau}h(x|v)$ epi-converge to the (proper) function $d^2h(x|v)$ as $\tau\downarrow 0$.
 \end{definition}

 By Definition \ref{ssderiv-def} and \cite[Proposition 7.2]{RW98}, the twice epi-differentiability of $h$ at $x$ for $v$ can be equivalently stated as follows: for every $w\in\mathbb{X}$ and 
 every sequence $\tau_k\downarrow 0$ there exists a sequence $w^k\to w$ such that
 $\Delta^2_{\tau_k}h(x|v)(w^k)\to d^2h(x|v)(w)$ as $k\to\infty$.
 By \cite[Proposition 13.5]{RW98},  the function $d^2h(x|v)$ is lsc and positively homogenous of degree $2$; and if it is proper, then the inclusion ${\rm dom}\,d^2h(x|v)\subset\mathcal{C}_{h}(x,v)$ holds.
 
 Next we recall the parabolic epi-differentiability of an extended real-valued function, which plays a crucial role in characterizing the expression of its second subderivative. 
 \begin{definition}\label{psderiv-def}(\cite[Definition 13.59]{RW98})
  Consider a proper function $h\!:\mathbb{X}\to\mathbb{\overline{R}}$, a point $x\in{\rm dom}\,h$ and a vector $w\in\mathbb{X}$ with $dh(x)(w)$ finite. Define the parabolic difference quotients of $h$ at $x$ for $w$ by 
  \[
   \Delta^2_{\tau}h(x)(w|z'):=\frac{h(x+\tau w+\frac{1}{2}\tau^2z')-h(x)-\tau dh(x)(w)}{\tau^2/2}\quad{\rm for}\ \tau>0.
  \]  
  The parabolic subderivative of $h$ at $x$ for $w$ with respect to (w.r.t.) $z$ is defined as
  \[
   d^2h(x)(w|z):=\liminf_{\tau\downarrow 0,z'\to z}\Delta^2_{\tau}h(x)(w|z'),
  \]
  and $h$ is said to be parabolically epi-differentiable at $x$ for $w$ if $\Delta^2_{\tau} h(x)(w|\cdot)$ epi-converge to $d^2h(x)(w|\cdot)$ as $\tau\downarrow 0$ and ${\rm dom}\,d^2h(x)(w|\cdot)\neq \emptyset$. 
 \end{definition} 

 By Definition \ref{psderiv-def} and \cite[Proposition 7.2]{RW98}, the parabolic epi-differentiability of $h$ at $x$ for $w$ can be equivalently described as follows: ${\rm dom}\,d^2h(x)(w|\cdot)\ne\emptyset$ and for every $z\in\mathbb{X}$ and every sequence $\tau_k\downarrow 0$ there exists a sequence $z^k\to z$ such that
 \begin{equation*}
  \Delta^2_{\tau_k}h(x)(w|z^k)\to d^2h(x)(w|z)\ \ {\rm as}\ k\to\infty.
 \end{equation*}

 When $h=\delta_{S}$ for a nonempty closed set $S\subset\mathbb{X}$, for any $x\in S$ and $w\in\mathcal{T}_S(x)$, $d^2h(x)(w|\cdot)$ is precisely the indicator of the second-order tangent set to $S$ at $x$ for $w$:
 \[
  \mathcal{T}^{2}_{S}(x,w)
  :=\Big\{z\in\mathbb{X}\ |\ \exists\, \tau_k\downarrow 0\ \ {\rm s.t.}\ \ {\rm dist}(x+\tau_kw+(\tau_k^2/2)z,S)=o(\tau_k^2)\Big\}.
 \]
 The inner second-order tangent set to $S$ at $x\in S$ for $w$ is defined as
 \[
 \mathcal{T}^{i,2}_{S}(x,w)
 :=\Big\{z\in\mathbb{X}\ |\ {\rm dist}(x+\tau w+(\tau^2/2)z,S)=o(\tau^2)\quad \forall\tau\ge 0\Big\}.
 \]
 One can check that $\mathcal{T}^{i,2}_{S}(x,w)=\emptyset$ if  $w\notin\mathcal{T}_{S}^{i}(x)$, and $\mathcal{T}^{2}_{S}(x,w)=\emptyset$ if $w\notin\mathcal{T}_{S}(x)$. The set $S$ is called parabolically derivable at $x$ for $w\in\mathcal{T}_{S}(x)$ if $\mathcal{T}^{i,2}_{S}(x,w) =\mathcal{T}^{2}_{S}(x,w)\ne\emptyset$.
 \begin{definition}\label{psemi-deriv}
  A mapping $g\!:\mathbb{X}\to\mathbb{R}^m$ is said to be parabolically semidifferentiable at $x$ for $w\in\mathbb{X}$ if it is semidifferentiable at $x$ for $w$, and for any $z\!\in\!\mathbb{X}$ the following limit
  \[
 	\lim_{\tau\downarrow 0,z'\to z}\Delta_{\tau}^2g(x)(w|z'):=
 	\frac{g(x+\tau w+\frac{1}{2}\tau^2 z')-g(x)-\tau dg(x)(w)}{\tau^2/2}
  \]
  exists and is finite, and it is called the parabolic semiderivative of $g$ at $x$ for $w$ with respect to $z$, denoted by $g''(x;w,z)$. The mapping $g$ is parabolically semidifferentiable at $x$ if it is parabolically semidifferentiable at $x$ for all $w$. 
 \end{definition} 

 When $m\!=\!1$, the parabolic semiderivative $g''(x;w,z)$, if allowing to be infinite, coincides with $d^2g(x)(w|z)$. The parabolic semidifferentiability of $g$ at $x$ for $w$ is different from its Hadamard second-order directional differentiability at $x$ in the direction $w$ introduced in  \cite[Section 2.2.3]{BS00} because the latter does not require the semidifferentiability of $g$ at $x$ for $w$. The following lemma will be used to achieve the second-order tangent sets to ${\rm dom}\,f$, which weakens the twice differentiability of inner mapping in \cite[Theorem 4.3]{MohMS21-TAMS} to be the parabolic semidifferentiability.   
 \begin{lemma}\label{MSw-lemma}
  Let $\varphi\!:\mathbb{R}^m\!\to\!\overline{\mathbb{R}}$ be a proper lsc function, and let $g\!:\mathbb{X}\!\to\!\mathbb{R}^m$ be parabolically semidifferentiable at $\overline{x}\!\in\! g^{-1}({\rm dom}\,\varphi)$. For any $w$ with $dg(\overline{x})(w)\in\mathcal{T}_{{\rm dom}\,\varphi}(g(\overline{x}))$, define
  \begin{equation*}
  \mathcal{S}_{w}(p):=\big\{u\in\mathbb{X}\ |\  g''(\overline{x};w,u)+p
  \in\mathcal{T}_{{\rm dom}\varphi}^2(g(\overline{x}),dg(\overline{x})(w))\big\}\quad{\rm for}\ p\in\mathbb{R}^m.
  \end{equation*}
  If the multifunction $\mathcal{H}(x):=g(x)-{\rm dom}\,\varphi$ is subregular at $(\overline{x},0)\in{\rm gph}\,\mathcal{H}$ with modulus $\kappa$, then 	$\mathcal{S}_{w}(p)\subset\mathcal{S}_{w}(0)+\kappa\|p\|_2\mathbb{B}_{\mathbb{X}}$ for all $p\in\mathbb{R}^m$ uniformly in $w$.
 \end{lemma}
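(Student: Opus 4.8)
The plan is to use the parabolic semidifferentiability of $g$ to turn the membership $u\in\mathcal{S}_w(p)$ into an \emph{approximate} feasibility statement for the system $g(x)\in{\rm dom}\,\varphi$ along a parabolic arc, and then to upgrade it to an \emph{exact} one by means of the subregularity of $\mathcal{H}$, with the size of the required correction controlled by $\kappa\|p\|_2$. Throughout write $\overline{y}:=g(\overline{x})$ and $D:={\rm dom}\,\varphi$, and fix $w$ with $\xi:=dg(\overline{x})(w)\in\mathcal{T}_{D}(\overline{y})$ together with $p\in\mathbb{R}^m$. If $\mathcal{S}_w(p)=\emptyset$ there is nothing to prove, so pick $u\in\mathcal{S}_w(p)$, i.e.\ $g''(\overline{x};w,u)+p\in\mathcal{T}^2_{D}(\overline{y},\xi)$; by the definition of the second-order tangent set there is a sequence $\tau_k\downarrow 0$ with ${\rm dist}_2\big(\overline{y}+\tau_k\xi+(\tau_k^2/2)(g''(\overline{x};w,u)+p),D\big)=o(\tau_k^2)$.

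First I would apply Definition~\ref{psemi-deriv} with $z=u$ (taking the constant sequence $z'\equiv u$) to get the expansion $g(x_k)=\overline{y}+\tau_k\xi+(\tau_k^2/2)g''(\overline{x};w,u)+o(\tau_k^2)$, where $x_k:=\overline{x}+\tau_k w+(\tau_k^2/2)u$. Combining this with the previous display via the triangle inequality yields ${\rm dist}_2(g(x_k),D)\le(\tau_k^2/2)\|p\|_2+o(\tau_k^2)$. Since $\mathcal{H}^{-1}(0)=g^{-1}(D)$ and ${\rm dist}_2(0,\mathcal{H}(x))={\rm dist}_2(g(x),D)$, and since $x_k\to\overline{x}$, the subregularity of $\mathcal{H}$ at $(\overline{x},0)$ with modulus $\kappa$ provides, for all large $k$, a point $\widetilde{x}_k\in g^{-1}(D)$ with $\|\widetilde{x}_k-x_k\|\le\kappa\,{\rm dist}_2(g(x_k),D)+o(\tau_k^2)\le\kappa(\tau_k^2/2)\|p\|_2+o(\tau_k^2)$. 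Writing $\widetilde{x}_k=\overline{x}+\tau_k w+(\tau_k^2/2)(u+v_k)$, we obtain $\|v_k\|\le\kappa\|p\|_2+o(1)$.

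The crucial step is the passage to the limit. As $\{v_k\}$ is bounded, along a subsequence $v_{k_j}\to\overline{v}$ with $\|\overline{v}\|\le\kappa\|p\|_2$; now I would use the joint limit in Definition~\ref{psemi-deriv} with $\tau=\tau_{k_j}\downarrow 0$ and $z'=u+v_{k_j}\to u+\overline{v}$ to conclude $\Delta^2_{\tau_{k_j}}g(\overline{x})(w|u+v_{k_j})\to g''(\overline{x};w,u+\overline{v})$, that is, $g(\widetilde{x}_{k_j})=\overline{y}+\tau_{k_j}\xi+(\tau_{k_j}^2/2)g''(\overline{x};w,u+\overline{v})+o(\tau_{k_j}^2)$. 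Since $g(\widetilde{x}_{k_j})\in D$, this forces ${\rm dist}_2\big(\overline{y}+\tau_{k_j}\xi+(\tau_{k_j}^2/2)g''(\overline{x};w,u+\overline{v}),D\big)=o(\tau_{k_j}^2)$, whence $g''(\overline{x};w,u+\overline{v})\in\mathcal{T}^2_{D}(\overline{y},\xi)$ and $u+\overline{v}\in\mathcal{S}_w(0)$. Consequently $u=(u+\overline{v})-\overline{v}\in\mathcal{S}_w(0)+\kappa\|p\|_2\mathbb{B}_{\mathbb{X}}$, and since the modulus $\kappa$ and the radius $\kappa\|p\|_2$ depend neither on $w$ nor on $p$, the inclusion $\mathcal{S}_w(p)\subset\mathcal{S}_w(0)+\kappa\|p\|_2\mathbb{B}_{\mathbb{X}}$ holds for every $p\in\mathbb{R}^m$ uniformly in $w$.

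I expect the main obstacle to lie in this last paragraph: the feasibility-restoring correction $v_k$ genuinely depends on $k$, so its limit cannot be read off from the parabolic semiderivative in a single fixed direction, as it could when $g$ is twice differentiable and $g''(\overline{x};w,\cdot)$ is affine. The remedy is precisely the ``semi'' feature of parabolic semidifferentiability — the defining limit for $g''(\overline{x};w,\cdot)$ is taken jointly over $\tau\downarrow 0$ and $z'\to z$ — which licenses replacing the moving directions $u+v_k$ by their subsequential limit $u+\overline{v}$. A secondary, routine point to verify is that the $o(\cdot)$ terms, the subregularity neighbourhood, and the extracted subsequence can all be aligned along one common index set.
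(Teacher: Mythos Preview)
Your proof is correct and follows essentially the same route as the paper's: expand $g$ parabolically along the fixed direction $u$, use subregularity of $\mathcal{H}$ to project the resulting point back into $g^{-1}(D)$ with a correction of size $\kappa(\tau_k^2/2)\|p\|_2+o(\tau_k^2)$, extract a convergent subsequence of the rescaled corrections, and then invoke the \emph{joint} limit in Definition~\ref{psemi-deriv} to identify the limiting direction as a member of $\mathcal{S}_w(0)$. The only cosmetic difference is sign and notation (the paper writes $z^k=x^k-(\tau_k^2/2)d^k$ where you write $\widetilde{x}_k=x_k+(\tau_k^2/2)v_k$), and the paper records the membership form of the second-order tangent set with an auxiliary sequence $u^k\to g''(\overline{x};w,u)+p$ rather than the equivalent distance form you use; your diagnosis of the ``semi'' feature as the device that handles the moving directions is exactly the point the paper exploits.
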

 \begin{proof}
  Fix any $p\in\mathbb{R}^m$ and $u\in\mathcal{S}_{w}(p)$. Then $g''(\overline{x};w,u)+p\in\mathcal{T}_{{\rm dom}\,\varphi}^2(g(\overline{x})\,|\, dg(\overline{x})(w))$. By the definition of second-order tangent set, there exist $\tau_k\downarrow 0$ and $u^k\to g''(\overline{x};w,u)+p$ such that $g(\overline{x})+\tau_kdg(\overline{x})(w)+\frac{1}{2}\tau_k^2u^k\in{\rm dom}\,\varphi$ for all $k\in\mathbb{N}$. 
  For each $k\!\in\!\mathbb{N}$, let $x^k\!:=\!\overline{x}+\tau_kw+\frac{1}{2}\tau_k^2u$. The parabolic semidifferentiability of $g$ at $\overline{x}$ implies that
 \[
  g(x^k)=g(\overline{x})+\tau_kdg(\overline{x})(w) +\frac{1}{2}\tau_k^2g''(\overline{x};w,u)+o(\tau_k^2).
 \]
 From the subregularity of $\mathcal{H}$ at $(\overline{x},0)$ with modulus $\kappa$, for each sufficiently large $k$, there exists $z^k\!\in g^{-1}({\rm dom}\varphi)$ such that $\|x^k\!-\!z^k\|\le\kappa{\rm dist}_2(g(x^k),{\rm dom}\varphi)$. Then, it holds that 
  \begin{align*}
  \|x^k\!-\!z^k\|
  &\le\kappa\|g(x^k)-g(\overline{x})-\tau_kdg(\overline{x})(w)-\frac{1}{2}\tau_k^2u^k\|\\
  &\leq \frac{1}{2}\tau_k^2\kappa\|u^k-g''(\overline{x};w,u)\|_2+o(\tau_k^2)
   \le\frac{1}{2}\tau_k^2\kappa\|p\|_2+o(\tau_k^2).
  \end{align*}
  This implies that the sequence $d^k\!:=\!\frac{2(x^k-z^k)}{\tau_k^2}$ is bounded, so there is $d\!\in\!\mathbb{X}$ such that $d^k\to d$ (if necessary by taking a subsequence) with $\|d\|\!\le\! \kappa\|p\|_2$. On the other hand, for all $k$ large enough,
  \[
   g^{-1}({\rm dom}\,\varphi)\ni z^k=x^k-(\tau_k^2/2)d^k=\overline{x}+\tau_kw+(\tau_k^2/2)(u-d)+o(\tau_k^2),
  \]
  which along with the parabolic semidifferentiability of $g$ at $\overline{x}$ yields that 
  \[
   {\rm dom}\,\varphi\ni g(z^k)=g(\overline{x})+\tau_kdg(\overline{x})(w)+
 	(\tau_k^2/2)g''(\overline{x};w,u\!-\!d)+o(\tau_k^2).
  \]
  This means that $g''(\overline{x};w,u\!-\!d)\!\in\!\mathcal{T}_{{\rm dom}\,\varphi}^2(g(\overline{x}),dg(\overline{x})(w))$ or $u\!-\!d\!\in\!\mathcal{S}_{w}(0)$.	Thus, ${\rm dist}(u,\mathcal{S}_{w}(0))\le\|d\|\le\kappa\|p\|_2$. By the arbitrariness of $p$ in $\mathbb{R}^m$ and $u\in\mathcal{S}_{w}(p)$, the desired inclusion then follows. The proof is completed.
 \end{proof}
  
 The following proposition discloses the link between the second-order tangent set to ${\rm dom}(\varphi\circ g)$ and the one to ${\rm dom}\,\varphi$ for $\varphi$ and $g$ from Lemma \ref{MSw-lemma}, which extends the conclusions of \cite[Theorem 4.5]{MohMS21-TAMS},  \cite[Proposition 4.3 (ii)]{MohS20-SIOPT} and \cite[Lemma 2.5]{Mehlitz21} to the constraint system $g(x)\!\in\!{\rm dom}\,\varphi$ for a parabolically semidifferentiable rather than twice (continuously) differentiable $g$, as well as removes the Clarke regularity of ${\rm dom}\,\varphi$ required in \cite[Theorem 4.5]{MohMS21-TAMS} and \cite[Proposition 4.3]{MohS20-SIOPT}.
 \begin{proposition}\label{SOTset}
  Let $h=\varphi\circ g$ where $\varphi\!:\mathbb{R}^m\to\overline{\mathbb{R}}$ is a proper lsc function, and $g\!:\mathbb{X}\to\mathbb{R}^m$ is a mapping. Consider any $\overline{x}\in{\rm dom}\,h$. Suppose that $g$ is parabolically semidifferentiable at $\overline{x}$, and that the multifunction $\mathcal{H}(x):=g(x)-{\rm dom}\,\varphi$ is subregular at $(\overline{x},0)$ with modulus $\kappa$. Then, for any $w\in\mathcal{T}_{{\rm dom}\,h}(\overline{x})$, the following equivalence holds:
  \begin{equation}\label{aim-pderive}
  z\in\mathcal{T}_{{\rm dom}\,h}^2(\overline{x},w)
  \ \Longleftrightarrow\ g''(\overline{x};w,z)\in\mathcal{T}_{{\rm dom}\,\varphi}^2(g(\overline{x}),dg(\overline{x})(w)).
 \end{equation}
 If in addition ${\rm dom}\,\varphi$ is parabolically derivable at $g(\overline{x})$ for $dg(\overline{x})(w)$, so is ${\rm dom}\,h$ at $\overline{x}$ for $w$. 
 \end{proposition}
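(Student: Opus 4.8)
The plan is to establish the equivalence \eqref{aim-pderive} by treating its two implications separately, using only ${\rm dom}\,h=g^{-1}({\rm dom}\,\varphi)$, the parabolic semidifferentiability of $g$ at $\overline{x}$, and the subregularity of $\mathcal{H}$; then the parabolic derivability of ${\rm dom}\,h$ will be deduced from that of ${\rm dom}\,\varphi$ with the aid of Lemma \ref{MSw-lemma}. Write $\overline{y}:=g(\overline{x})$, and note that $dg(\overline{x})(w)$ is well defined because the parabolic semidifferentiability of $g$ at $\overline{x}$ entails its semidifferentiability at $\overline{x}$ for $w$ by Definition \ref{psemi-deriv}.

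For the implication ``$z\in\mathcal{T}^2_{{\rm dom}\,h}(\overline{x},w)\Rightarrow g''(\overline{x};w,z)\in\mathcal{T}^2_{{\rm dom}\,\varphi}(\overline{y},dg(\overline{x})(w))$'', I would use the definition of the second-order tangent set to pick $\tau_k\downarrow 0$ and $r_k\to 0$ with $x^k:=\overline{x}+\tau_kw+\frac{1}{2}\tau_k^2(z+r_k)\in{\rm dom}\,h$, apply the parabolic semidifferentiability of $g$ at $\overline{x}$ along $z'=z+r_k\to z$ to obtain $g(x^k)=\overline{y}+\tau_k dg(\overline{x})(w)+\frac{1}{2}\tau_k^2\big(g''(\overline{x};w,z)+s_k\big)$ with $s_k\to 0$, and conclude from $g(x^k)\in{\rm dom}\,\varphi$ that ${\rm dist}_2\big(\overline{y}+\tau_k dg(\overline{x})(w)+\frac{1}{2}\tau_k^2 g''(\overline{x};w,z),{\rm dom}\,\varphi\big)\le\frac{1}{2}\tau_k^2\|s_k\|_2=o(\tau_k^2)$; this direction needs no subregularity. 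For the converse, starting from $g''(\overline{x};w,z)\in\mathcal{T}^2_{{\rm dom}\,\varphi}(\overline{y},dg(\overline{x})(w))$ I would take $\tau_k\downarrow 0$ and $v^k\to g''(\overline{x};w,z)$ with $\overline{y}+\tau_k dg(\overline{x})(w)+\frac{1}{2}\tau_k^2 v^k\in{\rm dom}\,\varphi$, set $x^k:=\overline{x}+\tau_kw+\frac{1}{2}\tau_k^2z$, use the parabolic semidifferentiability to get $g(x^k)=\overline{y}+\tau_k dg(\overline{x})(w)+\frac{1}{2}\tau_k^2 g''(\overline{x};w,z)+o(\tau_k^2)$, whence ${\rm dist}_2(g(x^k),{\rm dom}\,\varphi)=o(\tau_k^2)$; since $x^k\to\overline{x}$, the subregularity of $\mathcal{H}$ at $(\overline{x},0)$ with modulus $\kappa$ applies for large $k$ and yields ${\rm dist}(x^k,{\rm dom}\,h)\le\kappa\,{\rm dist}_2(g(x^k),{\rm dom}\,\varphi)=o(\tau_k^2)$, i.e.\ $z\in\mathcal{T}^2_{{\rm dom}\,h}(\overline{x},w)$.

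For the last assertion I would first observe that the same two arguments, now applied to an arbitrary sequence $\tau_k\downarrow 0$ rather than a specially chosen one, give the inner analogue $z\in\mathcal{T}^{i,2}_{{\rm dom}\,h}(\overline{x},w)\Leftrightarrow g''(\overline{x};w,z)\in\mathcal{T}^{i,2}_{{\rm dom}\,\varphi}(\overline{y},dg(\overline{x})(w))$. Together with \eqref{aim-pderive} and the hypothesis $\mathcal{T}^{i,2}_{{\rm dom}\,\varphi}(\overline{y},dg(\overline{x})(w))=\mathcal{T}^2_{{\rm dom}\,\varphi}(\overline{y},dg(\overline{x})(w))$, this yields $\mathcal{T}^{i,2}_{{\rm dom}\,h}(\overline{x},w)=\mathcal{T}^2_{{\rm dom}\,h}(\overline{x},w)$. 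It then remains to check that this common set is nonempty: fixing any $v\in\mathcal{T}^2_{{\rm dom}\,\varphi}(\overline{y},dg(\overline{x})(w))$, which is nonempty by the parabolic derivability of ${\rm dom}\,\varphi$, we have $0\in\mathcal{S}_w\big(v-g''(\overline{x};w,0)\big)$ by the definition of $\mathcal{S}_w$, so Lemma \ref{MSw-lemma} forces $\mathcal{S}_w(0)\ne\emptyset$, and by \eqref{aim-pderive} one has $\mathcal{S}_w(0)=\mathcal{T}^2_{{\rm dom}\,h}(\overline{x},w)$; hence ${\rm dom}\,h$ is parabolically derivable at $\overline{x}$ for $w$.

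The routine part is the bookkeeping for the parabolic difference quotient, i.e.\ that $\Delta^2_{\tau_k}g(\overline{x})(w|z')\to g''(\overline{x};w,z)$ whenever $\tau_k\downarrow 0$ and $z'\to z$ simultaneously. The substantive step is the converse inclusion in \eqref{aim-pderive}, where metric subregularity transfers an $o(\tau_k^2)$ bound on ${\rm dist}_2(g(x^k),{\rm dom}\,\varphi)$ to the same bound on ${\rm dist}(x^k,{\rm dom}\,h)$; and the only point where Lemma \ref{MSw-lemma} is genuinely needed, rather than a direct estimate, is the nonemptiness claim in the parabolic-derivability conclusion.
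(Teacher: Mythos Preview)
Your argument for the equivalence \eqref{aim-pderive} is correct and coincides with the paper's proof essentially line by line: the forward direction uses only parabolic semidifferentiability, and the converse combines parabolic semidifferentiability with metric subregularity to transfer the $o(\tau_k^2)$ bound from ${\rm dist}_2(g(x^k),{\rm dom}\,\varphi)$ to ${\rm dist}(x^k,{\rm dom}\,h)$.

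For the parabolic derivability conclusion, both you and the paper use Lemma \ref{MSw-lemma} in the same way to obtain $\mathcal{T}^2_{{\rm dom}\,h}(\overline{x},w)\ne\emptyset$. The difference lies in how $\mathcal{T}^2_{{\rm dom}\,h}(\overline{x},w)\subset\mathcal{T}^{i,2}_{{\rm dom}\,h}(\overline{x},w)$ is established. You rerun the two implications of \eqref{aim-pderive} with an \emph{arbitrary} sequence $\tau_k\downarrow 0$ to get the inner analogue $z\in\mathcal{T}^{i,2}_{{\rm dom}\,h}(\overline{x},w)\Leftrightarrow g''(\overline{x};w,z)\in\mathcal{T}^{i,2}_{{\rm dom}\,\varphi}(\overline{y},dg(\overline{x})(w))$, and then combine with the hypothesis $\mathcal{T}^{i,2}_{{\rm dom}\,\varphi}=\mathcal{T}^2_{{\rm dom}\,\varphi}$. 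The paper instead fixes $z\in\mathcal{T}^2_{{\rm dom}\,h}(\overline{x},w)$, writes the subregularity inequality as a bound on ${\rm dist}\big(z,\tfrac{{\rm dom}\,h-\overline{x}-\tau w}{\tau^2/2}\big)$, and invokes the set-convergence result \cite[Corollary~4.7]{RW98} (which needs parabolic derivability of ${\rm dom}\,\varphi$) to send the right-hand side to ${\rm dist}_2\big(g''(\overline{x};w,z),\mathcal{T}^{i,2}_{{\rm dom}\,\varphi}\big)=0$. Your route is slightly more elementary in that it avoids the appeal to Painlev\'e--Kuratowski convergence of the difference-quotient sets; the paper's route, on the other hand, makes the role of set convergence explicit and does not require separately checking the inner forward implication. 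Both are valid and of comparable length.
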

  \begin{proof}
  Fix any $w\in\mathcal{T}_{{\rm dom}\,h}(\overline{x})$. Pick any $z\in\mathcal{T}_{{\rm dom}\,h}^2(\overline{x},w)$. Then there exist $\tau_k\downarrow 0$ and $z^k\to z$ such that $\overline{x}+\tau_kw+\frac{1}{2}\tau_k^2z^k\in{\rm dom}\,h$ for each $k\in\mathbb{N}$. By the parabolic semidifferentiability of $g$, for all sufficiently large $k$, 
  \[
  {\rm dom}\,\varphi\ni g(\overline{x}+\tau_kw+\frac{1}{2}\tau_k^2z^k)=g(\overline{x})+\tau_kdg(\overline{x})(w)+\frac{1}{2}\tau_k^2g''(\overline{x};w,z)+o(\tau_k^2).
 \]
  This shows that $g''(\overline{x};w,z)\!\in\!\mathcal{T}_{{\rm dom}\,\varphi}^2(g(\overline{x}),dg(\overline{x})(w))$, so the implication $\Longrightarrow$ follows. For the converse, pick any $z\in\mathbb{X}$ such that $g''(\overline{x};w,z)\!\in\!\mathcal{T}_{{\rm dom}\,\varphi}^2(g(\overline{x}),dg(\overline{x})(w))$.  Then there exist $\tau_k\downarrow 0$ and $\xi^k\to g''(\overline{x};w,z)$ such that $g(\overline{x})+\tau_kdg(\overline{x})(w)+\frac{1}{2}\tau_k^2\xi^k\!\in\!{\rm dom}\,\varphi$ for each $k\in\mathbb{N}$. Write $x^k\!:=\!\overline{x}+\tau_kw+\frac{1}{2}\tau_k^2z$ for each $k\in\mathbb{N}$. By the parabolic semidifferentiability of $g$ at $\overline{x}$, we have $g(x^k)=g(\overline{x})+\tau_kdg(\overline{x})(w)+\frac{1}{2}\tau_k^2g''(\overline{x};w,z)+o(\tau_k^2)$ for all sufficiently large $k$. From the subregularity of $\mathcal{H}$ at $(\overline{x},0)$ with modulus $\kappa$, for each sufficiently large $k$, there exists $z^k\in{\rm dom}\,h$ such that
  \[
   \|x^k\!-\!z^k\|\le\kappa{\rm dist}_2(g(x^k),{\rm dom}\varphi)\le\kappa\big\|g(x^k)\!-\![g(\overline{x})+\tau_kdg(\overline{x})(w)+\frac{1}{2}\tau_k^2\xi^k]\big\|_2=o(\tau_k^2).
  \]
  This implies that ${\rm dom}\,h\ni z^k=\overline{x}+\tau_kw+\frac{1}{2}\tau_k^2(z+o(\tau_k^2)/\tau_k^2)$ for all $k$ large enough. Consequently, $z\in\mathcal{T}_{{\rm dom}\,h}^2(\overline{x},w)$, and the desired implication  holds. 
 
  Assume that ${\rm dom}\,\varphi$ is parabolically derivable. Then $\mathcal{T}_{{\rm dom}\,\varphi}^2(g(\overline{x}),dg(\overline{x})(w))\!\ne\!\emptyset$. Pick any $u\in\mathcal{T}_{{\rm dom}\,\varphi}^2(g(\overline{x}),dg(\overline{x})(w))$ and  any $z\in\mathbb{X}$. Then, $z\in\mathcal{S}_{w}(p)$ with $p:=u-g''(\overline{x};w,z)$, where $\mathcal{S}_{w}$ is the multifunction defined in Lemma \ref{MSw-lemma}. By invoking Lemma \ref{MSw-lemma}, there exists $\widetilde{z}\in\mathcal{S}_{w}(0)$ such that $\|z-\widetilde{z}\|\le\kappa\|p\|_2$. From $\widetilde{z}\in\mathcal{S}_{w}(0)$ and the equivalence in \eqref{aim-pderive}, $\widetilde{z}\in\mathcal{T}_{{\rm dom}\,h}^2(\overline{x},w)$. Consequently, $\mathcal{T}_{{\rm dom}\,h}^{2}(\overline{x},w)\ne\emptyset$. Pick any $z\in\mathcal{T}_{{\rm dom}\,h}^2(\overline{x},w)$. By the subregularity of $\mathcal{H}$ at $(\overline{x},0)$, for any sufficiently small $\tau>0$, it holds that 
  \(
  {\rm dist}(\overline{x}+\tau w+\frac{1}{2}\tau^2z,{\rm dom}\,h)\le\kappa{\rm dist}_2(g(\overline{x}+\tau w+\frac{1}{2}\tau^2z),{\rm dom}\varphi),
  \)
  or equivalently 
 \begin{equation}\label{distz0}
 {\rm dist}\Big(z,\frac{{\rm dom}\,h-\overline{x}-\tau w}{\frac{1}{2}\tau^2}\Big)
 \le\kappa{\rm dist}_2\Big(\Delta_{\tau}^2g(\overline{x})(w|z), \frac{{\rm dom}\varphi-g(\overline{x})-\tau dg(\overline{x})(w)}{\frac{1}{2}\tau^2}\Big).
 \end{equation}
 Clearly, $\Delta_{\tau}^2g(\overline{x})(w|z)\to g''(\overline{x};w,z):=\zeta$ as $\tau\downarrow 0$. Furthermore, since ${\rm dom}\,\varphi$ is parabolically derivable at $g(\overline{x})$ for $dg(\overline{x})(w)$, by invoking \cite[Corollary 4.7]{RW98}, when $\tau\downarrow 0$, 
 \[
   {\rm dist}_2\Big(\zeta,\frac{{\rm dom}\varphi-g(\overline{x})-\tau dg(\overline{x})(w)}
 	{\frac{1}{2}\tau^2}\Big)
   \ \to\ {\rm dist}_2\Big(\zeta,\mathcal{T}_{{\rm dom}\varphi}^{i,2}(g(\overline{x}),dg(\overline{x})(w))\Big)=0,
 \]
 where the equality is due to the equivalence in \eqref{aim-pderive}. Thus, the right-hand side of \eqref{distz0} tends to zero as $\tau\downarrow 0$, which implies that the left-hand side must approach to $0$ as $\tau\downarrow 0$, so $z\in\mathcal{T}_{{\rm dom}\,h}^{i,2}(\overline{x},w)$. By the arbitrariness of $z\in\mathcal{T}_{{\rm dom}\,h}^2(\overline{x},w)$, we have $\emptyset\ne\mathcal{T}_{{\rm dom}\,h}^2(\overline{x},w)=\mathcal{T}_{{\rm dom}\,h}^{i,2}(\overline{x},w)$. This shows that ${\rm dom}h$ is parabolically derivable at $\overline{x}$ for $w$. 
 \end{proof}

 For any $w\!\in\!\mathbb{X}$ with $dF(\overline{x})(w)\!\in\! \mathcal{T}_{{\rm dom}\,\vartheta}(F(\overline{x}))$, by invoking \cite[Proposition 3.37]{BS00}, 
 \begin{align*}
 \bigcup_{i=1}^s\mathcal{T}_{C_i}^{i,2}(F(\overline{x}),dF(\overline{x})(w))
  &\subset\mathcal{T}_{{\rm dom}\,\vartheta}^{i,2}(F(\overline{x}),dF(\overline{x})(w))
  \subset \mathcal{T}^2_{{\rm dom}\,\vartheta}(F(\overline{x}),dF(\overline{x})(w))\\  &=\bigcup_{i=1}^s\mathcal{T}_{C_i}^2(F(\overline{x}),dF(\overline{x})(w))=\bigcup_{i=1}^s\mathcal{T}_{C_i}^{i,2}(F(\overline{x}),dF(\overline{x})(w)),
 \end{align*}
 which implies that $\mathcal{T}_{{\rm dom}\,\vartheta}^{i,2}(F(\overline{x}),dF(\overline{x})(w))
 =\mathcal{T}^2_{{\rm dom}\,\vartheta}(F(\overline{x}),dF(\overline{x})(w))$. As ${\rm dom}\,\vartheta\ne\emptyset$ and each $C_i$ is convex polyhedral, by \cite[Page 168]{BS00}, there is an active component $C_i$ such that $0\!\in\!\mathcal{T}_{C_i}^{i,2}(F(\overline{x}),dF(\overline{x})(w))$, which means that $0\!\in\! \mathcal{T}^2_{{\rm dom}\,\vartheta}(F(\overline{x}),dF(\overline{x})(w))$. Thus, ${\rm dom}\,\vartheta$ is parabolically derivable. The parabolic derivability of ${\rm dom}\,\vartheta$ was also achieved in \cite[Lemma 2.4]{Mehlitz21}. Now invoking Proposition \ref{SOTset} leads to the following corollary.
 \begin{corollary}\label{Stcone-domf}
  Fix any $\overline{x}\in{\rm dom}\,f$. If the MSQC holds for constraint system $F(x)\in{\rm dom}\,\vartheta$ at $\overline{x}$, then ${\rm dom}\,f$ is parabolically derivable at $\overline{x}$ for any $w\in\!\mathcal{T}_{{\rm dom}\,f}(\overline{x})$, i.e., 
  \begin{equation*}
  \emptyset\ne\mathcal{T}^2_{{\rm dom}\,f}(\overline{x},w)=\big\{z\in\mathbb{X}\ |\ F''(\overline{x};w,z)\in\mathcal{T}^2_{{\rm dom}\,\vartheta}\big(F(\overline{x}),dF(\overline{x})(w)\big)\big\}
 		=\mathcal{T}^{i,2}_{{\rm dom}\,f}(\overline{x},w).
 \end{equation*}
 \end{corollary}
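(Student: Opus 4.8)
The plan is to read Corollary~\ref{Stcone-domf} as a direct application of Proposition~\ref{SOTset} with $\varphi=\vartheta$ and $g=F$, so that $h=\vartheta\circ F=f$ and ${\rm dom}\,h=F^{-1}({\rm dom}\,\vartheta)={\rm dom}\,f$. First I would verify the standing hypotheses of that proposition. By Assumption~\ref{ass0}(i), ${\rm dom}\,\vartheta=\bigcup_{i=1}^sC_i$ is a finite union of polyhedral sets, hence closed, and $\vartheta$ is (strictly) continuous relative to this closed domain, so $\vartheta$ is proper and lsc on $\mathbb{R}^m$. By Assumption~\ref{ass0}(ii), $F$ is parabolically semidifferentiable on an open superset of $F^{-1}({\rm dom}\,\vartheta)$, in particular at $\overline{x}\in{\rm dom}\,f$. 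Finally, the MSQC for system $F(x)\in{\rm dom}\,\vartheta$ at $\overline{x}$ says precisely that $\mathcal{H}(x):=F(x)-{\rm dom}\,\vartheta$ is metrically subregular at $(\overline{x},0)\in{\rm gph}\,\mathcal{H}$ with some modulus $\kappa>0$.

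Next, fix any $w\in\mathcal{T}_{{\rm dom}\,f}(\overline{x})$. By Lemma~\ref{tcone-domf}, $dF(\overline{x})(w)\in\mathcal{T}_{{\rm dom}\,\vartheta}(F(\overline{x}))$, so that the second-order tangent set and parabolic derivability at $F(\overline{x})$ for $dF(\overline{x})(w)$ are meaningful. Applying the first part of Proposition~\ref{SOTset} then yields the equivalence
\[
z\in\mathcal{T}^2_{{\rm dom}\,f}(\overline{x},w)\ \Longleftrightarrow\ F''(\overline{x};w,z)\in\mathcal{T}^2_{{\rm dom}\,\vartheta}\big(F(\overline{x}),dF(\overline{x})(w)\big),
\]
which identifies the middle set in the displayed formula of the corollary with $\mathcal{T}^2_{{\rm dom}\,f}(\overline{x},w)$.

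It then remains to show that ${\rm dom}\,f$ is parabolically derivable at $\overline{x}$ for $w$, i.e.\ $\emptyset\ne\mathcal{T}^2_{{\rm dom}\,f}(\overline{x},w)=\mathcal{T}^{i,2}_{{\rm dom}\,f}(\overline{x},w)$. For this I would invoke the second part of Proposition~\ref{SOTset}, whose extra hypothesis---parabolic derivability of ${\rm dom}\,\vartheta$ at $F(\overline{x})$ for $dF(\overline{x})(w)$---is exactly what was established in the discussion preceding the corollary (using that each $C_i$ is a convex polyhedron together with \cite[Proposition~3.37]{BS00} and \cite[Page~168]{BS00}, which give $0\in\mathcal{T}^2_{{\rm dom}\,\vartheta}(F(\overline{x}),dF(\overline{x})(w))$ and equality of the inner and outer second-order tangent sets there). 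The proposition then delivers both $\mathcal{T}^2_{{\rm dom}\,f}(\overline{x},w)\ne\emptyset$ and its coincidence with $\mathcal{T}^{i,2}_{{\rm dom}\,f}(\overline{x},w)$, which combined with the equivalence above completes the proof.

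Since every nontrivial ingredient---the tangent-cone formula of Lemma~\ref{tcone-domf}, the transfer of second-order tangent sets and parabolic derivability through the metrically subregular constraint map in Proposition~\ref{SOTset}, and the parabolic derivability of ${\rm dom}\,\vartheta$---has already been established, I do not anticipate a genuine obstacle here. The only point requiring a little care is checking that $\vartheta$, although not assumed convex, is nonetheless proper and lsc so that Proposition~\ref{SOTset} applies; this follows from the polyhedrality of ${\rm dom}\,\vartheta$ and the strict continuity of $\vartheta$ relative to it.
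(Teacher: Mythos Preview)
Your proposal is correct and follows precisely the paper's own approach: the corollary is stated immediately after the discussion establishing parabolic derivability of ${\rm dom}\,\vartheta$, and the paper's proof amounts to the single sentence ``Now invoking Proposition~\ref{SOTset} leads to the following corollary.'' You have simply spelled out that invocation in full detail, including the verification that $\vartheta$ is proper lsc and the use of Lemma~\ref{tcone-domf} to place $dF(\overline{x})(w)$ in $\mathcal{T}_{{\rm dom}\,\vartheta}(F(\overline{x}))$.
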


 Parabolic regularity of an extended real-valued function, as demonstrated in \cite{MohMS21-TAMS,MohS20-SIOPT}, is a crucial property to build the relationship between its second subderivative and its parabolic subderivative. Now we recall this important property.
 \begin{definition}\label{pregular-def}
  A function $h\!:\mathbb{X}\to\overline{\mathbb{R}}$ is parabolically regular at a point $\overline{x}\in{\rm dom}\,h$ for $\overline{v}$ if for every $w$ with $dh(\overline{x})(w)=\langle \overline{v},w\rangle$, 
  \(
   \inf\limits_{z\in\mathbb{X}}\big\{d^2 h(\overline{x})(w|z)-\langle\overline{v},z\rangle\big\}=d^2 h(\overline{x}|\overline{v})(w),
  \)
  or in other words, if for any $w\in{\rm dom}\,d^2h(\overline{x}|\overline{v})$ with $dh(\overline{x})(w)=\langle \overline{v},w\rangle$, there exist, among the sequences $\tau_k\downarrow 0$ and $w^k\to w$ with $\Delta_{\tau_k}^2h(\overline{x}|\overline{v})(w^k)\!\to d^2h(\overline{x}|\overline{v})(w)$, ones with an additional property $\limsup_{k\to\infty}\,\frac{\|w^k-w\|}{\tau_k}<\infty$. 
\end{definition}

 \section{Second-order variational properties of PWTD functions}\label{sec3}
 \begin{definition}\label{Def-PTD}
 A function $h\!:\mathbb{R}^m\to\overline{\mathbb{R}}$ is said to be PWTD if its domain is nonempty and can be represented as a union of finitely many polyhedral sets, say, ${\rm dom}\,h=\bigcup_{i=1}^s\Omega_i$ for polyhedral sets $\Omega_1,\ldots,\Omega_s$, and for each $i\in[s]$, there is a function $h_i$ that is twice differentiable on an open superset of $\Omega_i$ such that $h=h_i$ on $\Omega_i$.
 \end{definition}

 For the PWTD function $\psi$ appearing in this section, we write $\emptyset\ne{\rm dom}\,\psi:=\bigcup_{i=1}^s\Omega_i$ for polyhedral sets $\Omega_1,\ldots,\Omega_s$, and for each $i\in[s]$, let $\psi_i$ be a function that is twice differentiable on an open superset of $\Omega_i$ with $\psi=\psi_i$ on $\Omega_i$. For any given $y\in{\rm dom}\,\psi$ and $w\in\mathbb{R}^m$, we write $J_{y}:=\big\{i\in[s]\,|\, y\in\Omega_i\big\}$ and $J_{y,w}\!:=\{j\in J_{y}\,|\,w\in\mathcal{T}_{\Omega_j}(y)\}$. 
 
 First, we take a closer look at the subderivatives of PWTD functions. The following lemma provides the characterization for them and extends the results of \cite[Proposition 10.21]{RW98}. Since its proof is similar to that of \cite[Proposition 10.21]{RW98}, we do not include it. 
 \begin{lemma}\label{sderive-PTD}
  For a PWTD function $\psi\!:\mathbb{R}^m\to\overline{\mathbb{R}}$, the following assertions hold. 
  \begin{enumerate}
  \item[(i)] ${\rm dom}\,\psi$ is closed and $\psi$ is continuous relative to ${\rm dom}\,\psi$, so $\psi$ is lsc on $\mathbb{R}^m$;
		
  \item[(ii)] for each $y\in{\rm dom}\,\psi$, ${\rm dom}\,d\psi(y)=\mathcal{T}_{{\rm dom}\,\psi}(y)=\ {\textstyle\!\bigcup_{j\in J_{y}}}\mathcal{T}_{\Omega_j}(y)$; 
		
  \item[(iii)] for any $y\in{\rm dom}\,\psi$ and $w\in\mathbb{R}^m$,
		\[
		d\psi(y)(w)=\lim_{\tau\downarrow 0}\Delta_{\tau}\psi(y)(w)
		=\left\{\begin{array}{cl} 	 
		\!\!\langle\nabla \psi_k(y),w\rangle\ {\rm for\ any}\ k\in J_{y,w}&{\rm if}\ J_{y,w}\ne\emptyset,\\
			\infty &{\rm if}\ J_{y,w}=\emptyset, 
		\end{array}\right.	
	    \] 
 so $\psi$ is a properly epi-differentiable function with a proper piecewise linear $d\psi(y)$.   
 \end{enumerate}
\end{lemma}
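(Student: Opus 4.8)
For the final statement (Lemma~\ref{sderive-PTD}), the plan is to follow the template of the proof of \cite[Proposition 10.21]{RW98} for piecewise linear-quadratic functions; the only new ingredient is that each $\psi_i$ is now merely (twice) differentiable on an open superset of $\Omega_i$, which suffices since all the difference-quotient computations below use only first-order differentiability, while finiteness and the local structure of the difference quotients of $\psi$ are governed entirely by the polyhedral pieces $\Omega_i$. For (i), ${\rm dom}\,\psi=\bigcup_{i=1}^s\Omega_i$ is a finite union of closed polyhedra, hence closed. Given $y\in{\rm dom}\,\psi$ and $y^k\to y$ with $y^k\in{\rm dom}\,\psi$, finiteness of $s$ lets us pass to a subsequence along which $y^k\in\Omega_i$ for a fixed $i$; closedness of $\Omega_i$ forces $y\in\Omega_i$, and then $\psi(y^k)=\psi_i(y^k)\to\psi_i(y)=\psi(y)$ by continuity of $\psi_i$. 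Since every subsequence of $\{y^k\}$ has a further subsequence along which $\psi(y^k)\to\psi(y)$, we get $\psi(y^k)\to\psi(y)$, i.e. $\psi$ is continuous relative to ${\rm dom}\,\psi$; as ${\rm dom}\,\psi$ is closed, $\psi$ is lsc on $\mathbb{R}^m$ (its epigraph is relatively closed in the closed set ${\rm dom}\,\psi\times\mathbb{R}$).

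For (ii), since each $\Omega_i$ with $i\notin J_y$ is closed and avoids $y$, there is a neighborhood of $y$ on which ${\rm dom}\,\psi$ coincides with $\bigcup_{j\in J_y}\Omega_j$; because tangent cones are a local notion and the tangent cone of a finite union equals the union of the tangent cones, $\mathcal{T}_{{\rm dom}\,\psi}(y)=\bigcup_{j\in J_y}\mathcal{T}_{\Omega_j}(y)$. The inclusion ${\rm dom}\,d\psi(y)\subset\mathcal{T}_{{\rm dom}\,\psi}(y)$ is automatic: if $w\notin\mathcal{T}_{{\rm dom}\,\psi}(y)$, then for any $\tau_k\downarrow0$ and $w^k\to w$ one has $y+\tau_kw^k\notin{\rm dom}\,\psi$ for all large $k$ (otherwise $z^k:=y+\tau_kw^k\in{\rm dom}\,\psi$ with $(z^k-y)/\tau_k\to w$, forcing $w\in\mathcal{T}_{{\rm dom}\,\psi}(y)$), hence $\Delta_{\tau_k}\psi(y)(w^k)=\infty$ eventually and $d\psi(y)(w)=\infty$. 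For the reverse inclusion, fix $j\in J_y$ and $w\in\mathcal{T}_{\Omega_j}(y)$; polyhedrality of $\Omega_j$ gives $y+\tau w\in\Omega_j$ for all small $\tau>0$, so $\Delta_\tau\psi(y)(w)=\tau^{-1}[\psi_j(y+\tau w)-\psi_j(y)]\to\langle\nabla\psi_j(y),w\rangle<\infty$ and $w\in{\rm dom}\,d\psi(y)$. This proves the chain of equalities in (ii).

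For (iii), fix $y\in{\rm dom}\,\psi$ and $w\in\mathbb{R}^m$. If $J_{y,w}=\emptyset$, then $w\notin\mathcal{T}_{\Omega_j}(y)$ for every $j\in J_y$, so by polyhedrality $y+\tau w\notin\Omega_j$ for all small $\tau>0$, while $y+\tau w\notin\Omega_i$ for small $\tau$ when $i\notin J_y$; hence $y+\tau w\notin{\rm dom}\,\psi$ and $\Delta_\tau\psi(y)(w)=\infty$ for all small $\tau>0$, and by (ii) also $d\psi(y)(w)=\infty$, as claimed. If $J_{y,w}\ne\emptyset$, pick $k\in J_{y,w}$; then $y+\tau w\in\Omega_k$ for small $\tau>0$, so $\lim_{\tau\downarrow0}\Delta_\tau\psi(y)(w)=\langle\nabla\psi_k(y),w\rangle$, and this value is independent of the choice of $k\in J_{y,w}$ since any two such pieces satisfy $\psi_k=\psi=\psi_{k'}$ on $\Omega_k\cap\Omega_{k'}$, which contains $y+\tau w$ for all small $\tau>0$, whence $\langle\nabla\psi_k(y),w\rangle=\langle\nabla\psi_{k'}(y),w\rangle$. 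It remains to identify $d\psi(y)(w)$ with this limit; the inequality ``$\le$'' is trivial, and for ``$\ge$'' take $\tau_\nu\downarrow0$ and $w^\nu\to w$ realizing the defining $\liminf$ with finite value (otherwise $d\psi(y)(w)=\infty$ and there is nothing to prove); then $y+\tau_\nu w^\nu\in{\rm dom}\,\psi$ for large $\nu$, so one may pass to a subsequence along which $y+\tau_\nu w^\nu\in\Omega_i$ for a fixed $i$, note $i\in J_y$ (closedness of $\Omega_i$) and $w=\lim w^\nu\in\mathcal{T}_{\Omega_i}(y)$, hence $i\in J_{y,w}$, and then differentiability of $\psi_i$ at $y$ yields $\Delta_{\tau_\nu}\psi(y)(w^\nu)=\tau_\nu^{-1}[\psi_i(y+\tau_\nu w^\nu)-\psi_i(y)]\to\langle\nabla\psi_i(y),w\rangle=\langle\nabla\psi_k(y),w\rangle$. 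This establishes the displayed formula and shows that $\lim_{\tau\downarrow0}\Delta_\tau\psi(y)(w)$ exists and equals $d\psi(y)(w)$ for every $w$. Since $d\psi(y)$ is by definition the lower epi-limit of $\{\Delta_\tau\psi(y)\}$ and the upper epi-limit is bounded above by the pointwise limit along the constant sequence $w'\equiv w$, we obtain $\textrm{e-}\limsup_{\tau\downarrow0}\Delta_\tau\psi(y)\le d\psi(y)=\textrm{e-}\liminf_{\tau\downarrow0}\Delta_\tau\psi(y)$, hence $\Delta_\tau\psi(y)\xrightarrow[]{e}d\psi(y)$; moreover $d\psi(y)$ is proper (it equals $0$ at $w=0$ and never takes the value $-\infty$) and piecewise linear, its domain $\bigcup_{j\in J_y}\mathcal{T}_{\Omega_j}(y)$ being a finite union of polyhedral cones on each of which $d\psi(y)$ coincides with the linear function $w\mapsto\langle\nabla\psi_j(y),w\rangle$, these being mutually consistent on overlaps.

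The step I expect to be the crux is the ``$\ge$'' direction in (iii): ruling out that the $\liminf$ over the \emph{pair} $(\tau,w')$ defining $d\psi(y)(w)$ drops below the one-sided limit $\lim_{\tau\downarrow0}\Delta_\tau\psi(y)(w)$. The mechanism is that any minimizing sequence, after refinement, is confined to a single polyhedral piece $\Omega_i$ whose index is forced to belong to $J_{y,w}$, which pins the difference quotient to $\langle\nabla\psi_i(y),w\rangle$; what must be secured beforehand are the well-definedness (``consistency'') of the common gradient value on overlapping active pieces and the polyhedral identity $\mathcal{T}_{\Omega_j}(y)=\{w\mid y+\tau w\in\Omega_j\ {\rm for\ all\ small}\ \tau>0\}$, which collapses tangent and inner tangent cones and is exactly what lets the whole argument reduce to ordinary differential calculus on each piece.
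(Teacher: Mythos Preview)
Your proof is correct and follows precisely the approach the paper itself indicates, namely adapting the argument of \cite[Proposition~10.21]{RW98} to the PWTD setting; the paper in fact omits the proof, simply remarking that it is similar to that reference. All the key ingredients you isolate---the local reduction to active pieces $j\in J_y$, the polyhedral identity $\mathcal{T}_{\Omega_j}(y)=\{w\mid y+\tau w\in\Omega_j\ \text{for small }\tau>0\}$, the subsequence argument forcing a realizing sequence into a single piece, and the verification of epi-convergence via the constant sequence $w^k\equiv w$---are exactly what is needed, and your discussion of the ``crux'' step (the $\ge$ direction in (iii)) is on point.
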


Now we characterize the second subderivatives of PWTD functions and prove their  twice epi-differentiablility, extending \cite[Proposition 13.9]{RW98} for convex PWLQ functions and \cite[Theorem 3.2]{Aze98} for convex PWTD functions to general PWTD functions whose regular subdifferentials coincide with their basic subdifferentials.
\begin{proposition}\label{ssderive-PTD}
 For a PWTD function $\psi\!:\mathbb{R}^m\to\overline{\mathbb{R}}$, the following assertions hold. 
 \begin{enumerate}
  \item [(i)] Consider any $y\in{\rm dom}\,\psi$. For any $w\in\mathbb{R}^m$, it holds that
		\begin{equation*}
	d^2\psi(y)(w)=\lim_{\tau\downarrow 0}\Delta_{\tau}^2\psi(y)(w)
	  =\left\{\begin{array}{cl}
			\!\!\langle w,\nabla^2 \psi_{k}(y)w\rangle\ {\rm for\ each}\ k\in J_{y,w}&{\rm if}\ J_{y,w}\ne\emptyset,\\
				\infty & {\rm if}\ J_{y,w}=\emptyset,
			\end{array}\right.
		\end{equation*}
		and consequently $d^2\psi(y)$ is a PWLQ function.
		
  \item[(ii)] Fix any $y\in{\rm dom}\,\psi$ with $\widehat{\partial}\psi(y)=\partial \psi(y)$ and $v\in\partial \psi(y)$. For each $w\in\mathbb{R}^m$, 
		\begin{align*}
		d^2 \psi(y|v)(w)
		&=\left\{\begin{array}{cl}
		\!\!\langle w,\nabla^2 \psi_k(y)w\rangle\ {\rm for\ any}\ k\in J_{y,w}
				&{\rm if}\ w\in\mathcal{C}_{\psi}(y,v),\\
				\infty & {\rm if}\ w\notin\mathcal{C}_{\psi}(y,v)
			\end{array}\right.  \nonumber \\
			&=d^2 \psi(y)(w)+\delta_{\mathcal{C}_{\psi}(y,v)}(w)=\lim_{\tau\downarrow 0}\Delta_{\tau}^2\psi(y|v)(w),
	\end{align*}
	so $\psi$ is properly twice epi-differentiable at $y$ for $v$ with a proper PWLQ $d^2\psi(y|v)$.
 \end{enumerate}
\end{proposition}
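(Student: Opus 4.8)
The plan is to reduce everything to the second-order Taylor expansion of the smooth pieces $\psi_i$ by exploiting the polyhedrality of the $\Omega_i$. The first thing I would record is a geometric fact: for a polyhedral set $\Omega_i$ with $y\in\Omega_i$, one has $w\in\mathcal{T}_{\Omega_i}(y)$ if and only if $y+\tau w\in\Omega_i$ for all small $\tau>0$; and if $y+\tau_\ell w^\ell\in\Omega_i$ with $\tau_\ell\downarrow 0$ and $w^\ell\to w$, then convexity of $\Omega_i$ gives $w^\ell\in\mathcal{T}_{\Omega_i}(y)$, closedness of $\mathcal{T}_{\Omega_i}(y)$ gives $w\in\mathcal{T}_{\Omega_i}(y)$, and closedness of $\Omega_i$ gives $y\in\Omega_i$; hence $i\in J_{y,w^\ell}\cap J_{y,w}$. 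In particular $y+\tau w'\in{\rm dom}\,\psi$ forces $J_{y,w'}\neq\emptyset$, so $d\psi(y)(w')$ is finite whenever $\psi(y+\tau w')$ is, which rules out the degenerate branch of the difference-quotient conventions.

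For part (i), when $k\in J_{y,w}$ we have $y+\tau w\in\Omega_k$ for small $\tau$, so $\psi(y+\tau w)=\psi_k(y+\tau w)$; combining the second-order expansion of $\psi_k$ at $y$ with Lemma \ref{sderive-PTD}(iii) gives $\lim_{\tau\downarrow 0}\Delta^2_{\tau}\psi(y)(w)=\langle w,\nabla^2\psi_k(y)w\rangle$, which in particular shows this value is independent of the choice of $k\in J_{y,w}$ and yields $d^2\psi(y)(w)\le\langle w,\nabla^2\psi_k(y)w\rangle$. For the reverse inequality I take any $\tau_\ell\downarrow 0$, $w^\ell\to w$; indices with $y+\tau_\ell w^\ell\notin{\rm dom}\,\psi$ contribute $+\infty$ and may be dropped, and on a subsequence with $y+\tau_\ell w^\ell$ in a fixed piece $\Omega_i$ the geometric fact gives $i\in J_{y,w}$, while the expansion of $\psi_i$ at $y$ applied to $u=\tau_\ell w^\ell$ (with $o(\|u\|^2)=o(\tau_\ell^2)$ since $w^\ell$ is bounded) gives $\Delta^2_{\tau_\ell}\psi(y)(w^\ell)=\langle w^\ell,\nabla^2\psi_i(y)w^\ell\rangle+o(1)\to\langle w,\nabla^2\psi_i(y)w\rangle=\langle w,\nabla^2\psi_k(y)w\rangle$; hence $d^2\psi(y)(w)\ge\langle w,\nabla^2\psi_k(y)w\rangle$. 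When $J_{y,w}=\emptyset$ the same subsequence extraction forces $d^2\psi(y)(w)=+\infty$, and $y+\tau w\notin{\rm dom}\,\psi$ for small $\tau$ gives $\lim_{\tau\downarrow 0}\Delta^2_\tau\psi(y)(w)=+\infty$ too. Finally ${\rm dom}\,d^2\psi(y)=\bigcup_{j\in J_y}\mathcal{T}_{\Omega_j}(y)$ is a finite union of polyhedral cones on each of which $d^2\psi(y)$ is the quadratic $\langle\cdot,\nabla^2\psi_j(y)\cdot\rangle$ (consistently over overlaps), so $d^2\psi(y)$ is PWLQ.

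For part (ii), the key identity is $\Delta^2_\tau\psi(y|v)(w')=\Delta^2_\tau\psi(y)(w')+\frac{2}{\tau}\big[d\psi(y)(w')-\langle v,w'\rangle\big]$ whenever $y+\tau w'\in{\rm dom}\,\psi$, together with the fact that $v\in\partial\psi(y)=\widehat\partial\psi(y)$ forces $d\psi(y)(w')\ge\langle v,w'\rangle$ via \eqref{Rsdiff-sderiv}, so the correction term is nonnegative. If $w\in\mathcal{C}_\psi(y,v)$ it vanishes along $w'=w$, so $\Delta^2_\tau\psi(y|v)(w)=\Delta^2_\tau\psi(y)(w)\to d^2\psi(y)(w)$ by part (i), while for arbitrary $w^\ell\to w$ the nonnegative correction term only helps, so part (i)'s argument gives $\liminf_\ell\Delta^2_{\tau_\ell}\psi(y|v)(w^\ell)\ge d^2\psi(y)(w)$; hence $d^2\psi(y|v)(w)=d^2\psi(y)(w)$. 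If $w\notin\mathcal{C}_\psi(y,v)$, then either $d\psi(y)(w)=+\infty$ (treated as in part (i)) or $d\psi(y)(w)>\langle v,w\rangle$; in the latter case $d\psi(y)(w')-\langle v,w'\rangle$ stays bounded below by a positive constant along $w'\to w$ by lower semicontinuity of $d\psi(y)$, and $\Delta^2_{\tau_\ell}\psi(y)(w^\ell)$ is bounded below (eventually $\ge d^2\psi(y)(w)-1$, finite by part (i)), so the identity forces $\Delta^2_{\tau_\ell}\psi(y|v)(w^\ell)\to+\infty$, i.e. $d^2\psi(y|v)(w)=+\infty$. This proves the displayed formula; since $w'=w$ always realizes the limit, the twice epi-differentiability at $y$ for $v$ follows from the characterization recalled after Definition \ref{ssderiv-def}. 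Properness holds because $0\in\mathcal{C}_\psi(y,v)$ with $d^2\psi(y|v)(0)=0$ and the values are never $-\infty$, and the PWLQ property follows from $d^2\psi(y|v)=d^2\psi(y)+\delta_{\mathcal{C}_\psi(y,v)}$ with $\mathcal{C}_\psi(y,v)=\bigcup_{j\in J_y}\big(\mathcal{T}_{\Omega_j}(y)\cap(\nabla\psi_j(y)-v)^\perp\big)$ a finite union of polyhedral sets.

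The main obstacle is the lower-bound ("$\ge$") direction in both parts: controlling $\Delta^2_\tau\psi(y)(w')$ when $w'$ varies freely toward $w$, not merely along $w'=w$. The device that makes this work is the pigeonhole passage to a subsequence contained in a single polyhedral piece $\Omega_i$, followed by the observation that then $w^\ell\in\mathcal{T}_{\Omega_i}(y)$, so the active quadratic $\langle\cdot,\nabla^2\psi_i(y)\cdot\rangle$ is a legitimate one and the Taylor remainder (uniform in $\ell$ because $w^\ell$ is bounded) is negligible. A secondary point needing care is the $+\infty$ conventions in the definition of $\Delta^2_\tau$, which are innocuous precisely because $\psi(y+\tau w')$ finite implies $d\psi(y)(w')$ finite.
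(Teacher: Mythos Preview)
Your proposal is correct and follows essentially the same strategy as the paper: reduce to a single polyhedral piece by pigeonhole, Taylor-expand the smooth representative, and in part (ii) split off the nonnegative correction $\frac{2}{\tau}[d\psi(y)(w')-\langle v,w'\rangle]$ using $v\in\widehat\partial\psi(y)$. The paper organizes things slightly differently (it takes a single sequence realizing the liminf rather than bounding all sequences, and invokes continuity of $d\psi(y)$ relative to its domain rather than mere lower semicontinuity for the $w\notin\mathcal{C}_\psi(y,v)$ case), but the substantive ideas coincide.
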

\begin{proof}
 {\bf(i)} Fix any $w\in\mathbb{R}^m$. It suffices to prove the second equality. We first consider that $J_{y,w}\ne\emptyset$. Pick any $k\in J_{y,w}$. Clearly, $w\in\mathcal{T}_{\Omega_k}(y)$. By Definition \ref{ssderiv-def} and Lemma \ref{sderive-PTD} (ii), there exist $\tau_{\nu}\downarrow 0$ and $w^{\nu}\to w$ with $w^{\nu}\in\tau_{\nu}^{-1}[\,\bigcup_{i\in J_{y}}\Omega_i\!-\!y]$  such that 
 \[
  d^2\psi(y)(w)=\lim_{\nu\to\infty}\frac{\psi(y+\tau_{\!\nu}w^{\nu})-\psi(y)-\tau_{\!\nu}d\psi(y)(w^{\nu})}{\tau_{\!\nu}^2/2}.
 \]
 Obviously, $y+\tau_{\!\nu}w^{\nu}\in{\rm dom}\,\psi$ for each $\nu\in\mathbb{N}$ and $w^{\nu}\in{\rm dom}\,d\psi(y)$ for all sufficiently large $\nu\in\mathbb{N}$. Then, there exist an index $\overline{j}\in J_{y}$ and an infinite index set $N\subset\mathbb{N}$ such that for all $\nu\in N$, $y+\tau_{\nu}w^{\nu}\in\Omega_{\overline{j}}$ and $w^{\nu}\in\mathcal{T}_{\Omega_{\overline{j}}}(y)$, so that $\overline{j}\in J_{y,w}$ and
 \begin{equation}\label{d2h-equa0}
  d^2\psi(y)(w)=\lim_{\nu\xrightarrow[N]{}\infty}\frac{\psi_{\overline{j}}(y+\tau_{\!\nu}w^{\nu})-\psi_{\overline{j}}(y)-\tau_{\!\nu}\langle\nabla \psi_{\overline{j}}(y),w^{\nu}\rangle}{\tau_{\!\nu}^2/2}=\langle w,\nabla^2\psi_{\overline{j}}(y)w\rangle.
 \end{equation}
 On the other hand, by the polyhedrality of each $\Omega_{i}$ for $i\in[s]$ and \cite[Exercise 6.47]{RW98}, for any $\tau\!>\!0$ small enough, $y+\tau w\!\in\!\bigcap_{i\in J_{y,w}}\Omega_i$. Hence, for each $i\!\in\! J_{y,w}$, it holds that
 \begin{align}\label{d2h-equa1}
 \langle w,\nabla^2\psi_i(y)w\rangle
  &=\lim_{\tau\downarrow 0}\frac{\psi_i(y+\tau w)-\psi_i(y)-\tau\langle\nabla \psi_i(y),w\rangle}{\tau^2/2} \nonumber\\
  &=\lim_{\tau\downarrow 0}\frac{\psi(y+\tau w)-\psi(y)-\tau d\psi(y)(w)}{\tau^2/2}.
 \end{align} 
 Recall that $k\in J_{y,w}$. Together with the above equations \eqref{d2h-equa0} and \eqref{d2h-equa1}, 
 \[
  d^2\psi(y)(w)=\langle w,\nabla^2\psi_{\overline{j}}(y)w\rangle=\lim_{\tau\downarrow 0}\Delta_{\tau}^2\psi(y)(w)=\langle w,\nabla^2 \psi_k(y)w\rangle.
 \]
 By the arbitrariness of $k\in J_{y,w}$, the conclusion holds for the case $J_{y,w}\ne\emptyset$. When $J_{y,w}=\emptyset$, as $y\in{\rm dom}\,\psi$, we deduce from Lemma \ref{sderive-PTD} (ii) that $w\notin\mathcal{T}_{{\rm dom}\,\psi}(y)$. Consequently, for any sufficiently small $\tau>0$ and any $w'$ sufficiently close to $w$, $y+\tau w'\notin{\rm dom}\,\psi$, which implies that $d^2\psi(y)(w)=\infty=\lim_{\tau\downarrow 0}\Delta_{\tau}^2\psi(y)(w)$. Thus, the conclusion of part (i) holds.  

 \noindent
 {\bf(ii)} Fix any $w\in\mathbb{R}^m$. We first consider that $w\in\mathcal{T}_{{\rm dom}\,\psi}(y)$. By Definition \ref{ssderiv-def} and Lemma \ref{sderive-PTD} (ii), there exist $\tau_{\nu}\downarrow 0$ and $w^{\nu}\to w$ with $w^{\nu}\in\tau_{\nu}^{-1}[\,\bigcup_{j\in J_{y}}\Omega_j-y]$ such that 
 \[
  d^2\psi(y|v)(w)=\lim_{\nu\to\infty}\frac{\psi(y+\tau_{\nu}w^{\nu})-\psi(y)-\tau_{\!\nu}\langle v,w^{\nu}\rangle}{\tau_{\nu}^2/2}.
 \]
 Obviously, for each $\nu\in\mathbb{N}$, $y+\tau_{\!\nu}w^{\nu}\in{\rm dom}\,\psi$. Then, there exist an index $\widetilde{j}\in J_{y}$ and an infinite index set $N\subset\mathbb{N}$ such that for all $\nu\in N$, $y+\tau_{\nu}w^{\nu}\in\Omega_{\widetilde{j}}$ and $w^{\nu}\in\mathcal{T}_{\Omega_{\widetilde{j}}}(y)$, and consequently, $\widetilde{j}\in\!J_{y,w}$ and
 \begin{align}\label{d2h-equa2}
  d^2\psi(y|v)(w)&=\lim_{\nu\xrightarrow[N]{}\infty}\frac{\psi_{\widetilde{j}}(y+\tau_{\!\nu}w^{\nu})-\psi_{\widetilde{j}}(y)-\tau_{\!\nu}\langle v,w^{\nu}\rangle}{\tau_{\nu}^2/2}\nonumber\\
  &=\lim_{\nu\xrightarrow[N]{}\infty}\Big[\!\langle w^{\nu},\nabla^2\psi_{\widetilde{j}}(y)w^{\nu}\rangle+\frac{2(d\psi(y)(w^{\nu})-\langle v,w^{\nu}\rangle)}{\tau_{\!\nu}}+\frac{o(\tau_{\!\nu}^2)}{\tau_{\!\nu}^2}\Big]\nonumber\\
  &\ge\left\{\begin{array}{cl}
		\langle w,\nabla^2\psi_{\widetilde{j}}(y)w\rangle &{\rm if}\ w\in\mathcal{C}_{\psi}(y,v),\\
		\infty&{\rm if}\ w\in\mathcal{T}_{{\rm dom}\,\psi}(y)\backslash\mathcal{C}_{\psi}(y,v),
  \end{array}\right.	 
 \end{align}
 where the inequality for the case $w\in\mathcal{C}_{\psi}(y,v)$ is due to $v\in\widehat{\partial}\psi(y)$ and the equivalence in \eqref{Rsdiff-sderiv}, and the inequality for the case $w\in\mathcal{T}_{{\rm dom}\,\psi}(y)\backslash\mathcal{C}_{\psi}(y,v)$ is obtained by using the continuity of $d\psi(y)$ relative to its domain by Proposition \ref{sderive-PTD} (i). Note that $\widetilde{j}\in\!J_{y,w}$. Combining the above inequality \eqref{d2h-equa2} and equality \eqref{d2h-equa1} yields that
 \[
  d^2\psi(y|v)(w)=\left\{\begin{array}{cl}
	\!\langle w,\nabla^2 \psi_k(y)w\rangle\ {\rm for\ each}\ k\in J_{y,w}
	&{\rm if}\ w\in\mathcal{C}_{\psi}(y,v),\\
	\infty & {\rm if}\ w\in\mathcal{T}_{{\rm dom}\,\psi}(y)\backslash\mathcal{C}_{\psi}(y,v).
 \end{array}\right. 
 \]
 When $w\notin\mathcal{T}_{{\rm dom}\,\psi}(y)$, since $\mathcal{T}_{{\rm dom}\,\psi}(y)=\big\{w\in\mathbb{R}^m\,|\,\liminf_{\tau\downarrow 0}{\rm dist_2}\big(w,\frac{{\rm dom}\,\psi-y}{\tau}\big)=0\big\}$, there exists $\varepsilon>0$ such that for any $\tau\!\in(0,\varepsilon)$ and any $w'$ with $\|w'\!-w\|_2\le\varepsilon$, $w'\notin\tau^{-1}[{\rm dom}\,\psi-y]$. By Definition \ref{ssderiv-def}, $d^2\psi(y|v)(w)=\infty=d^2\psi(y)(w)$. Together with the above equation, we obtain the desired result. 
 \end{proof}

 In Proposition \ref{ssderive-PTD} (ii), the restriction $\widehat{\partial}\psi(y)=\partial \psi(y)$ on $y\in{\rm dom}\,\psi$ is crucial to the properness of $d^2\psi(y|v)$. For example, for the piecewise linear $h(t):=\max\{0,\min\{1,t\}\}$ and $\overline{t}=1$, an elementary calculation yields that $\emptyset=\widehat{\partial} h(\overline{t})\ne\partial h(\overline{t})=\{0,1\}$, and $dh(\overline{t})(w)=\left\{\begin{array}{cl}
 w &{\rm if}\ w\leq 0,\\
 0 &{\rm if}\ w>0.
 \end{array}\right.$ Pick $\overline{v}=1\in\partial h(\overline{t})$. As $\overline{v}w> dh(\overline{t})(w)=0$ for all $w>0$, we have $d^2 h(\overline{t}|\overline{v})(w)=-\infty$ by \cite[Proposition 13.5]{RW98}, so $d^2h(\overline{t}|\overline{v})$ is not proper.

 Next we characterize the parabolic subderivatives of PWTD functions, and show that a PWTD function $\psi$ is parabolically epi-differentiable at any $y\in{\rm dom}\,\psi$ for each $w\in{\rm dom}\,d\psi(y)$ under the same condition as in Proposition \ref{ssderive-PTD}.
 \begin{proposition}\label{psderive-PTD}
 Let $\psi\!:\mathbb{R}^m\to\overline{\mathbb{R}}$ be a PWTD function. Consider any $y\in{\rm dom}\,\psi$ and $w\in{\rm dom}\,d\psi(y)$. The following assertions hold.
 \begin{enumerate}
 \item[(i)]  $\mathcal{T}^2_{{\rm dom}\,\psi}(y,w)=\bigcup_{k\in J_{y,w}}\!\mathcal{T}^2_{\Omega_k}(y,w)=\mathcal{T}_{\mathcal{T}_{{\rm dom}\,\psi}(y)}(w)$.
		
 \item [(ii)]  $d^2\psi(y)(w|z)<\infty$ if and only if $z\in\mathcal{T}^2_{{\rm dom}\,\psi}(y,w)$.
		
 \item[(iii)] For any $z\in\mathbb{R}^m$, it holds that
		\begin{equation}\label{d2h-equa} 
		-\infty<d^2\psi(y)(w|z)=\lim_{\tau\downarrow 0}\frac{\psi\big(y+\tau w+\frac{1}{2}\tau^2 z\big)-\psi(y)-\tau d\psi(y)(w)}{\tau^2/2}, 
		\end{equation} 
	and consequently $\psi$ is parabolically epi-differentiable at $y$ for $w$.
		
 \item[(iv)] For any $z\in\mathbb{R}^m$, $d^2\psi(y)(w|z)\!=\!d^2\psi(y)(w)+d(d\psi(y)(w))(z)$. If $\psi$ is regular at $y$, i.e. its epigraph ${\rm epi}\,\psi$ is Clarke regular at $(y,\psi(y))$, then it holds that
		\begin{equation*}
		d^2 \psi(y)(w|z)=d^2 \psi(y)(w)+\sup_{\xi\in\mathcal{A}_{\psi}(y,w)}\langle\xi,z\rangle\quad\ \forall z\in\mathbb{R}^m,
		\end{equation*}
		where $\mathcal{A}_{\psi}(y,w)\!:=\!\big\{\xi\in\partial \psi(y)\ |\ \langle \xi,w\rangle=d\psi(y)(w)\}$.
	\end{enumerate}
 \end{proposition}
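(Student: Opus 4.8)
The whole argument rests on the piecewise polyhedral--smooth structure of $\psi$ together with the elementary second-order geometry of polyhedral sets, and I would treat (i)--(iv) in order, with (ii) and (iii) built on one common Taylor-expansion computation. For (i): since ${\rm dom}\,\psi=\bigcup_{i}\Omega_i$ is a finite union of closed sets, a pigeonhole argument on a sequence realizing $z\in\mathcal{T}^2_{{\rm dom}\,\psi}(y,w)$ confines it, along a subsequence, to a single $\Omega_k$, and passing to the limit of the associated difference quotients forces $y\in\Omega_k$ and $w\in\mathcal{T}_{\Omega_k}(y)$, i.e. $k\in J_{y,w}$; with the trivial reverse inclusion this yields the first equality. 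For the second equality I would invoke that for a polyhedral set $\mathcal{T}^2_{\Omega_k}(y,w)=\mathcal{T}_{\mathcal{T}_{\Omega_k}(y)}(w)$, combine it with Lemma~\ref{sderive-PTD}(ii), and apply the same ``tangent cone of a finite union of closed sets'' identity to the polyhedral cones $\mathcal{T}_{\Omega_j}(y)$, $j\in J_y$.

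For (ii) and (iii), fix $z\in\mathbb{R}^m$ and first suppose $z\in\mathcal{T}^2_{{\rm dom}\,\psi}(y,w)$. By (i) there is $k\in J_{y,w}$ with $z\in\mathcal{T}^2_{\Omega_k}(y,w)=\mathcal{T}_{\mathcal{T}_{\Omega_k}(y)}(w)$, and the explicit polyhedral inequality description of $\Omega_k$ shows $y+\tau w+\frac{1}{2}\tau^2z\in\Omega_k$ for all small $\tau>0$; a second-order Taylor expansion of $\psi_k$ at $y$, together with $d\psi(y)(w)=\langle\nabla\psi_k(y),w\rangle$ (Lemma~\ref{sderive-PTD}(iii)), then gives $\lim_{\tau\downarrow0}\Delta^2_{\tau}\psi(y)(w|z)=\langle w,\nabla^2\psi_k(y)w\rangle+\langle\nabla\psi_k(y),z\rangle$, a finite number. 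Conversely, any sequence $\tau_\nu\downarrow0$, $z^\nu\to z$ realizing $d^2\psi(y)(w|z)$ is eventually feasible, hence (by the pigeonhole/limit argument of (i)) confined to a fixed $\Omega_j$ with $j\in J_{y,w}$ and $z\in\mathcal{T}^2_{\Omega_j}(y,w)$, and the same Taylor expansion with moving centre produces the same limiting value (its independence of the admissible index following because $d^2\psi(y)(w|z)$ is a single well-defined number); so $d^2\psi(y)(w|z)$ is finite and equals $\lim_{\tau\downarrow0}\Delta^2_{\tau}\psi(y)(w|z)$.

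If instead $z\notin\mathcal{T}^2_{{\rm dom}\,\psi}(y,w)$, I would use that $\bigcup_{k\in J_{y,w}}\mathcal{T}^2_{\Omega_k}(y,w)$ is a closed polyhedral set, choose $\varepsilon>0$ with $\mathbb{B}(z,\varepsilon)$ disjoint from it, and read off from the polyhedral descriptions that for every $z'\in\mathbb{B}(z,\varepsilon)$ one has $y+\tau w+\frac{1}{2}\tau^2z'\notin\Omega_k$ for \emph{all} $\tau>0$ when $k\in J_{y,w}$, and $y+\tau w+\frac{1}{2}\tau^2z'\notin\Omega_i$ for all small $\tau$ (uniformly in such $z'$) when $i\notin J_{y,w}$ (splitting into $y\notin\Omega_i$ and $y\in\Omega_i$ with $w\notin\mathcal{T}_{\Omega_i}(y)$). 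Thus $\psi\equiv\infty$ on a whole parabolic neighbourhood, so $d^2\psi(y)(w|z)=\infty=\lim_{\tau\downarrow0}\Delta^2_{\tau}\psi(y)(w|z)$. This proves (ii); for (iii) the displayed identity has now been verified for all $z$, its value is $>-\infty$, and parabolic epi-differentiability follows since $0\in\mathcal{T}^2_{{\rm dom}\,\psi}(y,w)\ne\emptyset$ puts $0$ in ${\rm dom}\,d^2\psi(y)(w|\cdot)$ while the constant sequence $z^k\equiv z$ realizes the epi-limit (the e-$\liminf$ inequality being automatic). I expect the real obstacle to lie precisely here: showing that the $\liminf$ defining $d^2\psi(y)(w|z)$ is already attained along the constant sequence requires, in the infeasible case, a genuinely uniform (in $z'$ near $z$) statement that the parabolic arc misses ${\rm dom}\,\psi$ for \emph{all} small $\tau$, which is exactly where one must exploit the explicit inequality description of the $\Omega_i$ rather than merely their tangent and second-order tangent cones, and where the pieces with $i\notin J_{y,w}$ must be disposed of by a separate uniform argument.

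For (iv), the key observation is that $d\psi(y)$ is itself a PWTD function: by Lemma~\ref{sderive-PTD}(iii) its domain is $\bigcup_{j\in J_y}\mathcal{T}_{\Omega_j}(y)$, a finite union of polyhedra, and on each $\mathcal{T}_{\Omega_j}(y)$ it coincides with the linear (hence twice differentiable) function $\langle\nabla\psi_j(y),\cdot\rangle$. Applying Lemma~\ref{sderive-PTD}(iii) to $d\psi(y)$ and using $\mathcal{T}_{\mathcal{T}_{\Omega_j}(y)}(w)=\mathcal{T}^2_{\Omega_j}(y,w)$ gives $d(d\psi(y))(w)(z)=\langle\nabla\psi_k(y),z\rangle$ for $k\in J_{y,w}$ with $z\in\mathcal{T}^2_{\Omega_k}(y,w)$ and $=\infty$ otherwise; comparing with the expression for $d^2\psi(y)(w|z)$ from (iii) and with $d^2\psi(y)(w)=\langle w,\nabla^2\psi_k(y)w\rangle$ from Proposition~\ref{ssderive-PTD}(i) yields the first identity. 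When $\psi$ is regular at $y$, by the subdifferential formula for regular functions \cite[Theorem~8.30]{RW98} we have $d\psi(y)=\sigma_{\partial\psi(y)}$, the support function of the nonempty polyhedral convex set $\partial\psi(y)$; the classical formula for the directional derivative of a support function then gives $d(d\psi(y))(w)(z)=\sup_{\xi\in\mathcal{A}_\psi(y,w)}\langle\xi,z\rangle$, and substituting into the first identity completes (iv).
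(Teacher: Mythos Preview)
Your proposal is correct and follows essentially the same route as the paper: the pigeonhole argument on realizing sequences together with the polyhedral identity $\mathcal{T}^2_{\Omega_k}(y,w)=\mathcal{T}_{\mathcal{T}_{\Omega_k}(y)}(w)$ for (i), the Taylor expansion on a single piece for (ii)--(iii), and the observation that $d\psi(y)$ is itself piecewise linear (hence amenable to Lemma~\ref{sderive-PTD}) for (iv). The one place where you work harder than necessary is the infeasible case $z\notin\mathcal{T}^2_{{\rm dom}\,\psi}(y,w)$: your uniform-in-$z'$ argument is correct, but the paper simply reads off $d^2\psi(y)(w|z)=\infty$ from the contrapositive of (ii) (whose forward implication you already have from the realizing-sequence/pigeonhole step), and then only needs to check that the \emph{constant} sequence $z'\equiv z$ gives $+\infty$, which is immediate; so the ``obstacle'' you flag does not actually arise.
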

 \begin{proof} 
  {\bf(i)} From Lemma \ref{sderive-PTD} (ii), $w\in \mathcal{T}_{{\rm dom}\,\psi}(y)$; while from \cite[Proposition 3.37]{BS00}, $\mathcal{T}^2_{{\rm dom}\,\psi}(y,w)=\bigcup_{i=1}^{s}\!\mathcal{T}^2_{\Omega_i}(y,w)$. For each $\Omega_i$ with $i\in[s]$, invoking \cite[Proposition 13.12]{RW98} leads to $\mathcal{T}^2_{\Omega_i}(y,w)=\mathcal{T}_{\mathcal{T}_{\Omega_i}(y)}(w)$. Thus, for each $k\in[s]$, $z\in\mathcal{T}^2_{\Omega_k}(y,w)$ implies that $y\in\Omega_k$ and $w\in\mathcal{T}_{\Omega_k}(y)$, i.e., $k\in J_{y,w}$. Consequently,   $\bigcup_{i=1}^{s}\mathcal{T}^2_{\Omega_i}(y,w)=\bigcup_{k\in J_{y,w}}\!\mathcal{T}^2_{\Omega_k}(y,w)$. Together with \cite[Proposition 3.37]{BS00}, it follows that
  \[
  \mathcal{T}^2_{{\rm dom}\,\psi}(y,w)=\bigcup_{i=1}^{s}\mathcal{T}^2_{\Omega_i}(y,w)=\bigcup_{k\in J_{y,w}}\!\mathcal{T}^2_{\Omega_k}(y,w)=\mathcal{T}_{\bigcup_{k\in J_{y,w}}\!\mathcal{T}_{\Omega_k}(y)}(w)=\mathcal{T}_{\mathcal{T}_{{\rm dom}\,\psi}(y)}(w).
  \]

  \noindent
  {\bf(ii)} Pick any $z\in\mathbb{R}^m$ with $d^2\psi(y)(w|z)<\infty$. By Definition \ref{psderiv-def}, there exist sequences $\tau_{\nu}\downarrow 0$ and $z^{\nu}\to z$ such that
  \begin{equation*}
  \infty>d^2\psi(y)(w|z)=\lim_{\nu\to \infty}\frac{\psi(y+\tau_{\!\nu} w+\frac{1}{2}\tau_{\!\nu}^2 z^{\nu})-\psi(y)-\tau_{\!\nu}d\psi(y)(w)}{\tau_{\nu}^2/2},
  \end{equation*}
  which implies that $y+\tau_{\!\nu}w+\frac{1}{2}\tau_{\!\nu}^2 z^{\nu}\in {\rm dom}\,\psi$ for all sufficiently large $\nu$. Then, there exists an index $\overline{i}\!\in\![s]$ and an infinite index set $N\subset\mathbb{N}$ such that $\{y+\tau_{\!\nu}w+\frac{1}{2}\tau_{\!\nu}^2 z^{\nu}\}_{\nu\in N}\!\subset\! \Omega_{\overline{i}}$. Hence, $w\in\mathcal{T}_{\Omega_{\overline{i}}}(y)$ and $z\in\mathcal{T}^2_{\Omega_{\overline{i}}}(y,w)\subset\mathcal{T}^2_{{\rm dom}\,\psi}(y,w)$. Moreover, from the last equation, 
  \begin{align}\label{pd2h-equa0}
   d^2\psi(y)(w|z)
   &=\lim_{\nu\xrightarrow[N]{} \infty}\frac{\psi_{\overline{i}}(y+\tau_{\!\nu} w+\frac{1}{2}\tau_{\!\nu}^2 z^{\nu})-\psi_{\overline{i}}(y)-\tau_{\!\nu}\langle \nabla \psi_{\overline{i}}(y), w\rangle}{\tau_{\nu}^2/2}\nonumber\\
   &=\lim_{\nu\xrightarrow[N]{}\infty}\frac{\tau_{\!\nu}^2\langle\nabla \psi_{\overline{i}}(y), z^\nu\rangle+\langle \tau_{\nu}w+\frac{1}{2}\tau_{\nu}^2z^\nu,\nabla ^2 \psi_{\overline{i}}(y)(\tau_{\nu}w+\frac{1}{2}\tau_{\nu}^2z^\nu)\rangle+o(\tau_{\nu}^2)}{\tau_{\nu}^2}\nonumber\\
   &=\langle\nabla \psi_{\overline{i}}(y),z\rangle+\langle w,\nabla^2\psi_{\overline{i}}(y)w\rangle.
  \end{align}
  Conversely, pick any $z\!\in\!\mathcal{T}^2_{{\rm dom}\,\psi}(y,w)$. Then, there exists $\widetilde{i}\!\in\![s]$ such that $z\in\mathcal{T}^2_{\Omega_{\widetilde{i}}}(y,w)$. By the proof of part (i), $\widetilde{i}\!\in\! J_{y,w}$. Recall that $\Omega_{\widetilde{i}}$ is polyhedral. For any sufficiently small $\tau\!>\!0$, $y+\tau w+\frac{1}{2}\tau^2 z\in\Omega_{\widetilde{i}}$. Consequently, it holds that
  \begin{align*}
  d^2\psi(y)(w|z)
  &\le\lim_{\tau\downarrow 0}\frac{\psi(y+\tau w+\frac{1}{2}\tau^2z)-\psi(y)\!-\!\tau d\psi(y)(w)}{\frac{1}{2}\tau^2}\\
  &=\lim_{\tau\downarrow 0}\frac{\psi_{\widetilde{i}}(y+\tau w+\frac{1}{2}\tau^2z)
 		-\psi_{\widetilde{i}}(y)\!-\!\tau d\psi_{\widetilde{i}}(y)(w)}{\frac{1}{2}\tau^2}\\
   &=\langle w,\nabla^2 \psi_{\widetilde{i}}(y)w\rangle+\langle\nabla \psi_{\widetilde{i}}(y), z\rangle.
  \end{align*}
  This shows that $d^2\psi(y)(w|z)<\infty$, and the desired equivalence then follows. 

  \noindent
  {\bf(iii)} Fix any $z\!\in\!\mathbb{R}^m$. We proceed the arguments by two cases: $z\!\notin\!\mathcal{T}^2_{{\rm dom}\,\psi}(y,w)$ and $z\in\mathcal{T}^2_{{\rm dom}\,\psi}(y,w)$. If $z\notin\mathcal{T}^2_{{\rm dom}\,\psi}(y,w)$, for any $\tau>0$ small enough, $y+\tau w+\frac{1}{2}\tau^2z\notin{\rm dom}\,\psi$. Together  with $w\in{\rm dom}\,d\psi(y)$, it follows that
  \[
   d^2\psi(y)(w|z)=\infty=\lim_{\tau\downarrow 0}\frac{\psi(y+\tau w+\frac{1}{2}\tau^2 z)-\psi(y)-\tau d\psi(y)(w)}{\tau^2/2}.
  \]
  That is, equation \eqref{d2h-equa} holds for this case. Next we consider that $z\in\mathcal{T}^2_{{\rm dom}\,\psi}(y,w)=\bigcup_{k\in J_{y,w}}\!\mathcal{T}^2_{\Omega_k}(y,w)$. For convenience, write $J_{y,w,z}\!:=\{k\!\in \!J_{y,w}\,|\,z\!\in\!\mathcal{T}^2_{\Omega_k}(y,w)\}$. For each $i\!\in\!J_{y,w,z}$, from $z\in\mathcal{T}^2_{\Omega_i}(y,w)$, the polyhedrality of  $\Omega_{i}$ and \cite[Proposition 13.12]{RW98}, for any sufficiently small $\tau>0$, $y+\tau w+\frac{1}{2}\tau^2z\in\Omega_i$. Thus, for each $i\in J_{y,w,z}$, by using Lemma \ref{sderive-PTD} (ii) and an elementary calculation, it holds that
  \begin{align}\label{pd2h-equa1}
   \langle w,\nabla^2 \psi_{i}(y)w\rangle+\langle\nabla \psi_{i}(y), z\rangle
   &=\lim_{\tau\downarrow 0}\frac{\psi_i(y+\tau w+\frac{1}{2}\tau^2z)-\psi_i(y)-\tau \langle \psi_i(y),w\rangle}{\tau^2/2}\nonumber\\
   &=\lim_{\tau\downarrow 0}\frac{\psi(y+\tau w+\frac{1}{2}\tau^2z)-\psi(y)-\tau d\psi(y)(w)}{\tau^2/2}.
  \end{align}
  In addition, from the proof of the necessity for part (ii), there exists an index $\overline{i}\in J_{y,w,z}$. By combining \eqref{pd2h-equa1} and \eqref{pd2h-equa0}, we obtain \eqref{d2h-equa}. Note that $0\in\mathcal{T}^2_{\Omega_k}(y,w)$ for each $k\in J_{y,w}$. From part (ii), we have $d^2\psi(y)(w|0)<\infty$. This, along with \eqref{d2h-equa}, shows that $\psi$ is parabolically epi-differentiable at $y$ for $w$.  

  \noindent
  {\bf(iv)} Fix any $z\in\mathbb{R}^m$. We first consider that $J_{y,w,z}\ne\emptyset$. Pick any $i\in\!J_{y,w,z}$. By the polyhedrality of  $\Omega_{i}$, for any $\tau>0$ small enough, $y+\tau w+\frac{1}{2}\tau^2z\in\Omega_i$. From part (iii), 
  \begin{align*}
  d^2\psi(y)(w|z)
  &=\lim_{\tau\downarrow 0}
 	\frac{\psi_i(y+\tau w+\frac{1}{2}\tau^2z )-\psi_i(y)-\tau d\psi_i(y)(w)}{\tau^2/2}\nonumber\\
  &=\langle w,\nabla^2\psi_i(y)w\rangle+\langle\nabla \psi_i(y),z\rangle
 	=d^2\psi(y)(w)+\langle\nabla \psi_i(y),z\rangle, 
  \end{align*}
  where the second equality is using $d\psi(y)(w)=\langle\nabla \psi_i(y),w\rangle$ and the third one is due to Proposition \ref{ssderive-PTD} (i). When $J_{y,w,z}=\emptyset$, by the definition of $J_{y,w,z}$, for each $k\in J_{y,w}$, $z\notin\mathcal{T}_{\Omega_k}^2(y,w)$. From the proof of part (i), $z\notin\mathcal{T}_{\mathcal{T}_{{\rm dom}\,\psi}(y)}(w)={\rm dom}\,d^2\psi(y)(w|\cdot)$, and consequently, $d^2\psi(y)(w|z)=\infty$. The above discussions show that  
  \begin{equation}\label{pd2h-equa3}
   d^2\psi(y)(w|z)=d^2\psi(y)(w)+\left\{\begin{array}{cl}
 		\!\langle\nabla \psi_k(y),z\rangle\ {\rm for\ any}\ k\in J_{y,w,z}&{\rm if}\ J_{y,w,z}\ne\emptyset,\\
 		\infty&{\rm if}\ J_{y,w,z}=\emptyset. 
 	\end{array}\right.
  \end{equation}
  Let $\psi_{y}=d\psi(y)$. By Lemma \ref{sderive-PTD} (ii)-(iii), $\psi_y$ is a proper piecewise linear function with ${\rm dom}\,\psi_{y}=\bigcup_{k\in J_{y}}\!\mathcal{T}_{\Omega_k}(y)$. For any $w\in\mathbb{R}^m$ and $z\!\in\!\mathbb{R}^m$, define the index sets
  \[
   I_{w}\!:=\!\{i\!\in\! J_{y}\,|\,w\in\mathcal{T}_{\Omega_i}(y)\}\ \ {\rm and}\ \ I_{w,z}\!:=\!\{i\in I_{w}\,|\,z\!\in\!\mathcal{T}_{\mathcal{T}_{\Omega_i}(y)}(w)\}. 
  \]
  Applying the conclusion of Lemma \ref{sderive-PTD} (iii) to the function $\psi_y$ yields that 
  \[
   d\psi_{y}(w)(z)=\left\{\begin{array}{cl}
 	\!\langle \nabla \psi_k(y),z\rangle\ {\rm for\ any}\ k\in I_{w,z}&{\rm if}\ I_{w,z}\ne\emptyset,\\
 	\infty&{\rm if}\ I_{w,z}=\emptyset. 
   \end{array}\right.
  \]
  Note that $I_{w,z}\!=\!J_{y,w,z}$ because $\mathcal{T}^2_{\Omega_i}(y,w)\!=\!\mathcal{T}_{\mathcal{T}_{\Omega_i}(y)}(w)$ for each $i\in[s]$. Combining the above equality with \eqref{pd2h-equa3} yields the first part of the conclusions. Now assume that $\psi$ is regular at $y$. Invoking \cite[Theorem 8.30]{RW98} for $\psi$ and the properness of $d\psi(y)$ by Lemma  \ref{sderive-PTD} (iii) leads to $\psi_y(w')=\sup_{v\in\partial \psi(y)}\langle v,w'\rangle$ for any $w'\!\in\!\mathbb{R}^m$, which implies that $\psi_y$ is  lsc convex. Along with its properness, $\partial \psi_y(w)=\mathop{\arg\max}_{v\in\partial \psi(y)}\langle v,w\rangle$, and consequently
  \[
   d\psi_y(w)(z)=\sup_{\xi\in\partial \psi_y(w)}\langle \xi,z\rangle
   =\sup_{\xi\in\mathbb{R}^m}\Big\{\langle \xi,z\rangle\ \ {\rm s.t.}\ \ 
   \xi\in\partial \psi(y),\,\langle\xi,w\rangle=d\psi(y)(w)\Big\},
  \] 
  where the first equality is obtained by using \cite[Theorem 8.30]{RW98} for $\psi_y$ and the properness of $d\psi_{y}(w)$ due to Lemma \ref{sderive-PTD} (iii). Combining above equalities and the first part of the conclusions and using the definition of $\mathcal{A}_{\psi}(y,w)$ yields the result. 
 \end{proof}

 Proposition \ref{psderive-PTD} extends the result of \cite[Exercise 13.61]{RW98} for PWLQ convex functions to PWTD functions. By Propositions \ref{psderive-PTD} (iv) and \ref{ssderive-PTD} (ii), we can prove the parabolic regularity of PWTD functions, which partly extends the result of \cite[Theorem 13.67]{RW98}.
\begin{proposition}
 Let $\psi\!:\mathbb{R}^m\to\overline{\mathbb{R}}$ be a PWTD function. Consider any $y\in{\rm dom}\,\psi$ and $w\in{\rm dom}\,d\psi(y)$. Let $\varphi(z)\!:=d^2\psi(y)(w|z)$ for $z\in\mathbb{R}^m$. The following assertions hold.
 \begin{enumerate}
 \item [(i)] $\varphi$ is a proper lsc function with $\varphi^*=-d^2\psi(y)(w)+h^*$, where $h(\cdot)=d(d\psi(y)(w))(\cdot)$. 
		
 \item[(ii)] Suppose that $\psi$ is regular at $y$. Then, $\varphi$ is a proper lsc convex function with 
 \[
  \varphi^*(z^*)=-d^2\psi(y)(w)+\delta_{\mathcal{A}_{\psi}(y,w)}(z^*)\quad{\rm for}\ z^*\in\mathbb{R}^m, 
  \]
  and the function $\psi$ is parabolically regular at $y$ for every $v\in\partial \psi(y)$.   
 \end{enumerate}
\end{proposition}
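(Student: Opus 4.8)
My plan is to read both conjugate formulas off the parabolic‑subderivative identity in Proposition~\ref{psderive-PTD}(iii)--(iv), and then to deduce parabolic regularity by computing $-\varphi^{*}(v)$ and matching it with the second subderivative $d^2\psi(y|v)(w)$ from Proposition~\ref{ssderive-PTD}(ii). \textbf{Part (i).} By Proposition~\ref{psderive-PTD}(iii)--(iv), $\varphi(z)=d^2\psi(y)(w|z)=c+h(z)$ for all $z\in\mathbb{R}^m$, where $c:=d^2\psi(y)(w)\in\mathbb{R}$ is finite (Proposition~\ref{psderive-PTD}(iii)) and $h(\cdot)=d(d\psi(y)(w))(\cdot)$. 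Writing $\psi_{y}:=d\psi(y)$, Lemma~\ref{sderive-PTD}(ii)--(iii) shows $\psi_{y}$ is a proper piecewise linear function, hence itself PWTD; a second application of Lemma~\ref{sderive-PTD}(iii), now to $\psi_{y}$ at $w$, shows $h=d\psi_{y}(w)$ is lsc and never equals $-\infty$, and $0\in{\rm dom}\,h$ because $0\in\mathcal{T}^2_{\Omega_k}(y,w)$ for $k\in J_{y,w}$. Hence $h$ is proper lsc, so is $\varphi=c+h$, and $\varphi^{*}=(c+h)^{*}=-c+h^{*}=-d^2\psi(y)(w)+h^{*}$.

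\textbf{Part (ii).} When $\psi$ is regular at $y$, Proposition~\ref{psderive-PTD}(iv) gives $\varphi(z)=c+\sup_{\xi\in\mathcal{A}_{\psi}(y,w)}\langle\xi,z\rangle$, i.e.\ $\varphi$ equals $c$ plus the support function of $\mathcal{A}_{\psi}(y,w)$, which is sublinear and lsc, so $\varphi$ is lsc convex; it is proper by Part~(i) (equivalently $\varphi(0)=c$ is finite, which forces $\mathcal{A}_{\psi}(y,w)\ne\emptyset$). Next I would record that $\mathcal{A}_{\psi}(y,w)$ is closed and convex: regularity gives $\partial\psi(y)=\widehat{\partial}\psi(y)$, hence convex, and the computation in the proof of Proposition~\ref{psderive-PTD}(iv) identifies $\mathcal{A}_{\psi}(y,w)=\partial\psi_{y}(w)=\mathop{\arg\max}_{\xi\in\partial\psi(y)}\langle\xi,w\rangle$, the intersection of a closed convex set with a supporting hyperplane. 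Therefore the conjugate of its support function is $\delta_{\mathcal{A}_{\psi}(y,w)}$, and with Part~(i) this yields $\varphi^{*}(z^{*})=-d^2\psi(y)(w)+\delta_{\mathcal{A}_{\psi}(y,w)}(z^{*})$.

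\textbf{Parabolic regularity.} Fix $v\in\partial\psi(y)$ and any $w$ with $d\psi(y)(w)=\langle v,w\rangle$; this forces $w\in{\rm dom}\,d\psi(y)$ and $w\in\mathcal{C}_{\psi}(y,v)$, and the formulas of Parts~(i)--(ii) are available at this $w$ since Propositions~\ref{psderive-PTD} and~\ref{ssderive-PTD} hold for every direction in ${\rm dom}\,d\psi(y)$. Then
\[
 \inf_{z\in\mathbb{R}^m}\big\{d^2\psi(y)(w|z)-\langle v,z\rangle\big\}=-\varphi^{*}(v)=d^2\psi(y)(w)-\delta_{\mathcal{A}_{\psi}(y,w)}(v).
\]
Since $v\in\partial\psi(y)$ and $\langle v,w\rangle=d\psi(y)(w)$, we have $v\in\mathcal{A}_{\psi}(y,w)$, so the right-hand side equals $d^2\psi(y)(w)$. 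On the other hand $\widehat{\partial}\psi(y)=\partial\psi(y)$ lets Proposition~\ref{ssderive-PTD}(ii) apply, giving $d^2\psi(y|v)(w)=d^2\psi(y)(w)$ for $w\in\mathcal{C}_{\psi}(y,v)$. Comparing, the defining identity of Definition~\ref{pregular-def} holds at $y$ for every $v\in\partial\psi(y)$.

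\textbf{Main obstacle.} The arithmetic is short; the two points needing care are (a) observing that $\psi_{y}=d\psi(y)$ is again a proper PWTD function, so Lemma~\ref{sderive-PTD} can be reapplied to make $h$ genuinely \emph{proper} rather than merely nowhere $-\infty$, and (b) establishing, under regularity, that $\mathcal{A}_{\psi}(y,w)$ is closed and convex, so that the conjugate of its support function is exactly $\delta_{\mathcal{A}_{\psi}(y,w)}$ — this is precisely where $\partial\psi(y)=\widehat{\partial}\psi(y)$ enters, and hence why the regularity hypothesis cannot be dropped. Reconciling the two formulations of Definition~\ref{pregular-def} is then routine, since ${\rm dom}\,d^2\psi(y|v)=\mathcal{C}_{\psi}(y,v)$ by Proposition~\ref{ssderive-PTD}(ii).
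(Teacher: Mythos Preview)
Your proof is correct and follows essentially the same route as the paper: both read $\varphi=c+h$ (resp.\ $\varphi=c+\sigma_{\mathcal{A}_{\psi}(y,w)}$ under regularity) off Proposition~\ref{psderive-PTD}(iv), take conjugates, and then verify parabolic regularity by computing $-\varphi^{*}(v)$ and matching it with $d^2\psi(y|v)(w)$ via Proposition~\ref{ssderive-PTD}(ii). Your argument is in fact slightly more direct in the last step, since you observe $v\in\mathcal{A}_{\psi}(y,w)$ outright rather than sandwiching with the general inequality of \cite[Proposition~13.64]{RW98}, and you add useful justification for the properness of $h$ and the closed convexity of $\mathcal{A}_{\psi}(y,w)$ that the paper leaves implicit.
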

 \begin{proof}  
 {\bf(i)} By the first part of Proposition \ref{psderive-PTD} (iv), $\varphi(z)=d^2\psi(y)(w)+d(d\psi(y)(w))(z)$ for $z\in\mathbb{R}^m$. The expression of $\varphi^*$ then follows the definition of conjugate functions. 

 \noindent
 {\bf(ii)} Since $\psi$ is regular at $y$, from the second part of Proposition \ref{psderive-PTD} (iv), $\varphi$ is a sum of $d^2\psi(y)(w)$ and the support function of the closed convex set $\mathcal{A}_{\psi}(y,w)$, so is a proper lsc convex function. The expression of $\varphi^*$ follows the definition of conjugate functions. Fix any $v\in\partial \psi(y)$. Pick any $w\in\mathcal{C}_{\psi}(y,v)$. From the expression of $\varphi^*$, it follows that
 \[
  -\varphi^*(v)=d^2\psi(y)(w)-\delta_{\mathcal{A}_{\psi}(y,w)}(v)\le d^2\psi(y)(w)=d^2\psi(y|v)(w),
 \] 
 where the second equality is due to Proposition \ref{ssderive-PTD} (ii) and $w\in\mathcal{C}_{\psi}(y,v)$. By the definition of conjugate functions, $-\varphi^*(v)\!=\!\inf_{z\in\mathbb{R}^m}\big\{d^2\psi(y)(w|z)-\langle v,z\rangle\}\!\ge\! d^2\psi(y|v)(w)$, where the inequality is due to \cite[Proposition 13.64]{RW98}. Together with the above equation,  $\inf_{z\in\mathbb{R}^m}\big\{d^2\psi(y)(w|z)-\langle v,z\rangle\}\!=\!d^2\psi(y|v)(w)$. This, by Definition \ref{pregular-def}, shows that $\psi$ is parabolically regular at $y$ for $v$. The result follows the arbitrariness of $v\in\partial \psi(y)$.
 \end{proof}

 \section{Twice epi-differentiability of $f$}\label{sec4}
 
 Before establishing the twice epi-differentiability of $f$, we first prove its parabolic epi-differentiability. This requires the following technical lemma. Since its proof can be found in \cite[Theorem 3.4]{Moh22-archive} by Assumption \ref{ass0} and Lemma \ref{sderive-PTD}, here we do not include it.
 \begin{lemma}\label{fsubderive-lemma}
  Fix any $\overline{x}\in{\rm dom}\,f$. If the MSQC holds for system $F(x)\in{\rm dom}\,\vartheta$ at $\overline{x}$, then $df(\overline{x})(w)=d\vartheta(F(\overline{x}))(dF(\overline{x})(w))$ for $w\in\mathbb{X}$, and consequently, $df(\overline{x})$ is a proper and piecewise positively homogeneous continuous function (i.e., ${\rm dom}\,df(\overline{x})$ is nonempty and can be represented as a union of finitely many polyhedral sets, say $\bigcup_{i=1}^lD_i$ for polyhedral sets $D_1,\ldots,D_l$, and for each $i\in[l]$, there is a function $h_i$, which is a positively homogeneous continuous function on a superset of $D_i$, such that $df(\overline{x})=h_i$ on $D_i$).
 \end{lemma}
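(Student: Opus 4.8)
The plan is to reduce the whole statement to the subderivative chain rule $df(\overline{x})(w)=d\vartheta(F(\overline{x}))(dF(\overline{x})(w))$ for all $w\in\mathbb{X}$, and then to read off properness and the piecewise description from Lemma~\ref{sderive-PTD}. Write $\overline{y}:=F(\overline{x})$. Two facts are available: by Assumption~\ref{ass0}(ii) and Definition~\ref{psemi-deriv}, $F$ is semidifferentiable at $\overline{x}$, so $\tau^{-1}[F(\overline{x}+\tau w')-\overline{y}]\to dF(\overline{x})(w)$ whenever $\tau\downarrow 0$ and $w'\to w$; and by Lemma~\ref{sderive-PTD}, for $u\in{\rm dom}\,d\vartheta(\overline{y})=\mathcal{T}_{{\rm dom}\,\vartheta}(\overline{y})$ one has $\overline{y}+\tau u\in{\rm dom}\,\vartheta$ for all small $\tau>0$ and $\Delta_{\tau}\vartheta(\overline{y})(u)\to d\vartheta(\overline{y})(u)$ is a genuine limit. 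I would establish the chain rule through two one-sided estimates.

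For the inequality $df(\overline{x})(w)\ge d\vartheta(\overline{y})(dF(\overline{x})(w))$ no qualification condition is needed. If $df(\overline{x})(w)=\infty$ there is nothing to prove; otherwise I pick $\tau_k\downarrow 0$ and $w^k\to w$ with $\Delta_{\tau_k}f(\overline{x})(w^k)\to df(\overline{x})(w)<\infty$, so $F(\overline{x}+\tau_k w^k)\in{\rm dom}\,\vartheta$ for large $k$. Setting $u^k:=\tau_k^{-1}[F(\overline{x}+\tau_k w^k)-\overline{y}]$, semidifferentiability of $F$ gives $u^k\to dF(\overline{x})(w)=:u$, while $\Delta_{\tau_k}f(\overline{x})(w^k)=\Delta_{\tau_k}\vartheta(\overline{y})(u^k)$ by the very definition of $u^k$. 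Passing to the limit yields $df(\overline{x})(w)=\lim_k\Delta_{\tau_k}\vartheta(\overline{y})(u^k)\ge d\vartheta(\overline{y})(u)$ from the definition of the subderivative.

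The opposite inequality is where the MSQC enters. Fix $w$; if $d\vartheta(\overline{y})(dF(\overline{x})(w))=\infty$ we are done, so assume $u:=dF(\overline{x})(w)\in{\rm dom}\,d\vartheta(\overline{y})$. Then $\overline{y}+\tau u\in{\rm dom}\,\vartheta$ for small $\tau>0$, hence ${\rm dist}_2(F(\overline{x}+\tau w),{\rm dom}\,\vartheta)\le\|F(\overline{x}+\tau w)-(\overline{y}+\tau u)\|_2=o(\tau)$ by semidifferentiability. Invoking the MSQC with modulus $\kappa$ produces, for all small $\tau>0$, a point $x_{\tau}\in{\rm dom}\,f$ with $\|x_{\tau}-(\overline{x}+\tau w)\|\le\kappa\,{\rm dist}_2(F(\overline{x}+\tau w),{\rm dom}\,\vartheta)=o(\tau)$; put $w_{\tau}:=\tau^{-1}(x_{\tau}-\overline{x})\to w$. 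Semidifferentiability applied once more gives $F(x_{\tau})=\overline{y}+\tau u+o(\tau)$, and since $F(x_{\tau})$ and $\overline{y}+\tau u$ both lie in ${\rm dom}\,\vartheta$ near $\overline{y}$ while $\vartheta$ is strictly continuous relative to its domain, $|\vartheta(F(x_{\tau}))-\vartheta(\overline{y}+\tau u)|=o(\tau)$. Therefore $\Delta_{\tau}f(\overline{x})(w_{\tau})=\Delta_{\tau}\vartheta(\overline{y})(u)+o(1)$, and letting $\tau\downarrow 0$ gives $df(\overline{x})(w)\le d\vartheta(\overline{y})(u)$. Combining the two estimates proves the chain rule, which also yields ${\rm dom}\,df(\overline{x})=\{w\in\mathbb{X}\,|\,dF(\overline{x})(w)\in\mathcal{T}_{{\rm dom}\,\vartheta}(\overline{y})\}=\mathcal{T}_{{\rm dom}\,f}(\overline{x})$, consistent with Lemma~\ref{tcone-domf}.

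For the remaining assertions, properness is immediate: $d\vartheta(\overline{y})$ is proper by Lemma~\ref{sderive-PTD}(iii), so $df(\overline{x})>-\infty$, and $dF(\overline{x})(0)=0\in{\rm dom}\,d\vartheta(\overline{y})$ gives $0\in{\rm dom}\,df(\overline{x})$. Writing ${\rm dom}\,d\vartheta(\overline{y})$ as the union of the polyhedral cones $\mathcal{T}_{C_j}(\overline{y})$ over the active indices $j$, on each of which $d\vartheta(\overline{y})$ agrees with a linear functional $\langle\nabla\vartheta_j(\overline{y}),\cdot\rangle$, the sets $D_j:=\{w\in\mathbb{X}\,|\,dF(\overline{x})(w)\in\mathcal{T}_{C_j}(\overline{y})\}$ cover ${\rm dom}\,df(\overline{x})$ and $df(\overline{x})$ restricted to $D_j$ equals $\langle\nabla\vartheta_j(\overline{y}),dF(\overline{x})(\cdot)\rangle$, which is positively homogeneous and continuous. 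The one genuinely delicate point I expect here is that each $D_j$ is itself a finite union of polyhedral sets. When $dF(\overline{x})$ is piecewise linear — as when $F$ is piecewise affine, and in particular when $F$ is differentiable at $\overline{x}$ so that $dF(\overline{x})$ is linear — this is clear, since the preimage of a polyhedral cone under a piecewise linear map is such a union; for a merely parabolically semidifferentiable $F$, $dF(\overline{x})$ is only positively homogeneous and continuous, and this is precisely the step handled by the detailed analysis of \cite[Theorem~3.4]{Moh22-archive} that we invoke. Everything else is routine once the chain rule is in place.
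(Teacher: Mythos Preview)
Your chain-rule argument is correct and essentially reconstructs what the paper leaves implicit: the paper gives no proof here at all and simply cites \cite[Theorem~3.4]{Moh22-archive} together with Assumption~\ref{ass0} and Lemma~\ref{sderive-PTD}. Your two one-sided estimates are exactly the right decomposition, and the use of the MSQC to manufacture the approximating points $x_\tau\in{\rm dom}\,f$, combined with the strict continuity of $\vartheta$ relative to its domain to control $\vartheta(F(x_\tau))-\vartheta(\overline{y}+\tau u)$, is precisely the mechanism one expects. Properness and the identification ${\rm dom}\,df(\overline{x})=\mathcal{T}_{{\rm dom}\,f}(\overline{x})$ also follow as you say.

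One caution on the final paragraph. You correctly isolate the only genuinely nontrivial residual claim---that each $D_j=\{w:dF(\overline{x})(w)\in\mathcal{T}_{C_j}(\overline{y})\}$ is a finite union of polyhedral sets---and defer it to \cite{Moh22-archive}, mirroring the paper. But be aware that for a merely parabolically semidifferentiable $F$ this claim, read literally, can fail: already in the paper's Type~II example ($F(x)=\|x_2\|_q-x_1$, $\vartheta=\delta_{\mathbb{R}_-}$) at $\overline{x}=0$ one has ${\rm dom}\,df(\overline{x})=K$, the $q$-order cone, which for $n\ge 3$ and $q\in(1,\infty)$ is not a finite union of polyhedra. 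What survives, and what the paper actually uses downstream (Lemmas~\ref{ccone-ffun} and Proposition~\ref{prop-fpsubderive}), is only the chain rule, properness, and continuity of $df(\overline{x})$ relative to its domain---all of which your argument does establish. So your deference to the reference is in line with the paper, but you should not read the polyhedrality assertion too literally; the substantive content you need is already in your hands.
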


 By Lemma \ref{fsubderive-lemma}, the critical cone of the function $f$ can be characterized as follows. 
 \begin{lemma}\label{ccone-ffun}
  Consider any $\overline{x}\in{\rm dom}f$ and $\overline{v}\in\partial\!f(\overline{x})$. Suppose that the MSQC holds for constraint system $F(x)\in{\rm dom}\,\vartheta$ at $\overline{x}$. Then, the following assertions hold.
  \begin{enumerate}
  \item [(i)] $w\in\mathcal{C}_{\!f}(\overline{x},\overline{v})$ iff
 		$d\vartheta(F(\overline{x}))(dF(\overline{x})(w))
 		=\langle\overline{v},w\rangle$, and in this case
 		$w\in\mathcal{T}_{{\rm dom}\,f}(\overline{x})$.
 		
 \item[(ii)] For any $w\in \mathcal{T}_{{\rm dom}\,f}(\overline{x})$,
 $\mathcal{T}^2_{{\rm epi}\,f}\big((\overline{x},F(\overline{x})),(w,dF(\overline{x})(w))\big)\ne\emptyset$.
 		
  \item [(iii)] $\mathcal{C}_{\!f}(\overline{x},\overline{v})\subset
 		{\rm dom}\,d^2\!f(\overline{x}|\overline{v})$, and the converse inclusion also holds if $\overline{v}\in\widehat{\partial}f(\overline{x})$.
 		
  \item[(iv)] $\mathcal{C}_{\!f}(\overline{x},\overline{v})\ne\emptyset$ whenever $dF(\overline{x})(0)=0$.
 \end{enumerate}
 \end{lemma}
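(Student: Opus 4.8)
The plan is to obtain all four parts from the subderivative chain rule of Lemma~\ref{fsubderive-lemma} together with the parabolic‐derivability results of Section~\ref{sec3} and Corollary~\ref{Stcone-domf}.

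Parts (i) and (iv) are the quickest. For (i), Lemma~\ref{fsubderive-lemma} gives $df(\overline x)(w)=d\vartheta(F(\overline x))(dF(\overline x)(w))$ for every $w$, so by the definition of the critical cone $w\in\mathcal{C}_f(\overline x,\overline v)$ is equivalent to $d\vartheta(F(\overline x))(dF(\overline x)(w))=\langle\overline v,w\rangle$. When this holds, $df(\overline x)(w)=\langle\overline v,w\rangle$ is finite, hence $dF(\overline x)(w)\in{\rm dom}\,d\vartheta(F(\overline x))=\mathcal{T}_{{\rm dom}\,\vartheta}(F(\overline x))$ by Lemma~\ref{sderive-PTD}(ii), and then Lemma~\ref{tcone-domf} yields $w\in\mathcal{T}_{{\rm dom}\,f}(\overline x)$. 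For (iv), if $dF(\overline x)(0)=0$ then $df(\overline x)(0)=d\vartheta(F(\overline x))(0)=0=\langle\overline v,0\rangle$ (using that $d\vartheta(F(\overline x))$ is positively homogeneous of degree one and finite at $0$ by Lemma~\ref{sderive-PTD}(ii)--(iii)), i.e.\ $0\in\mathcal{C}_f(\overline x,\overline v)$, so $\mathcal{C}_f(\overline x,\overline v)\neq\emptyset$.

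For (ii) I would observe that ${\rm epi}\,f=\{(x,\alpha)\,|\,(F(x),\alpha)\in{\rm epi}\,\vartheta\}$ and apply the mechanism of Proposition~\ref{SOTset} to the parabolically semidifferentiable mapping $(x,\alpha)\mapsto(F(x),\alpha)$ and the set ${\rm epi}\,\vartheta$. The set ${\rm epi}\,\vartheta$ is parabolically derivable at $(F(\overline x),\vartheta(F(\overline x)))$ for the direction associated with $dF(\overline x)(w)$: this is the parabolic epi-differentiability of the PWTD function $\vartheta$ from Proposition~\ref{psderive-PTD}(iii), which applies since, for $w\in\mathcal{T}_{{\rm dom}\,f}(\overline x)$, $dF(\overline x)(w)\in\mathcal{T}_{{\rm dom}\,\vartheta}(F(\overline x))={\rm dom}\,d\vartheta(F(\overline x))$ by Lemma~\ref{tcone-domf}. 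As the MSQC for $F(x)\in{\rm dom}\,\vartheta$ at $\overline x$ transfers to metric subregularity of $(x,\alpha)\mapsto(F(x),\alpha)-{\rm epi}\,\vartheta$ at the corresponding point of its graph, Proposition~\ref{SOTset} yields parabolic derivability of ${\rm epi}\,f$, so the second-order tangent set in the statement is nonempty. Equivalently, one may pick $z\in\mathcal{T}^2_{{\rm dom}\,f}(\overline x,w)$, nonempty by Corollary~\ref{Stcone-domf}, and verify, using the piecewise $C^2$ structure of $\vartheta$ along the parabolic arc $\overline x+\tau w+\frac12\tau^2z$ together with the parabolic semidifferentiability of $F$, that $d^2f(\overline x)(w|z)<\infty$, which exhibits an explicit element of that tangent set.

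For (iii), the inclusion $\mathcal{C}_f(\overline x,\overline v)\subset{\rm dom}\,d^2f(\overline x|\overline v)$ follows by combining (i) and (ii) with \cite[Proposition~13.64]{RW98}: for $w\in\mathcal{C}_f(\overline x,\overline v)$ we have $w\in\mathcal{T}_{{\rm dom}\,f}(\overline x)$ by (i), hence some $z$ with $d^2f(\overline x)(w|z)<\infty$ by (ii), and then $d^2f(\overline x|\overline v)(w)\le d^2f(\overline x)(w|z)-\langle\overline v,z\rangle<\infty$. For the converse under $\overline v\in\widehat\partial f(\overline x)$, I would show $d^2f(\overline x|\overline v)(w)=\infty$ for every $w\notin\mathcal{C}_f(\overline x,\overline v)$, arguing as in the proof of Proposition~\ref{ssderive-PTD}(ii): if $w\notin\mathcal{T}_{{\rm dom}\,f}(\overline x)$ then $\overline x+\tau w'\notin{\rm dom}\,f$ for all small $\tau>0$ and $w'$ near $w$, so the difference quotient is $+\infty$; if $w\in\mathcal{T}_{{\rm dom}\,f}(\overline x)\setminus\mathcal{C}_f(\overline x,\overline v)$, one uses the identity $\Delta^2_\tau f(\overline x|\overline v)(w')=\Delta^2_\tau f(\overline x)(w')+2\tau^{-1}\big(df(\overline x)(w')-\langle\overline v,w'\rangle\big)$ together with $df(\overline x)(w')\ge\langle\overline v,w'\rangle$ (from $\overline v\in\widehat\partial f(\overline x)$ and \eqref{Rsdiff-sderiv}), the continuity and positive homogeneity of $df(\overline x)$ relative to its domain from Lemma~\ref{fsubderive-lemma} (so $df(\overline x)(w')-\langle\overline v,w'\rangle\to df(\overline x)(w)-\langle\overline v,w\rangle>0$ along admissible $w'$), and a lower bound on $\Delta^2_\tau f(\overline x)(w')$ coming from the piecewise $C^2$ structure of $\vartheta$ and the parabolic semidifferentiability of $F$, to conclude that the second term forces the quotient to $+\infty$.

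The main obstacle I expect is (ii) — more precisely, the finiteness and lower-boundedness estimates that both (ii) and the harder direction of (iii) rest on: one must control $d^2f(\overline x)(w|\cdot)$ (and rule out a $-\infty$ value of $\Delta^2_\tau f(\overline x)(\cdot)$) for the non-amenable composition $\vartheta\circ F$ with $F$ merely parabolically semidifferentiable, which is exactly where the metric-subregularity transfer of Lemma~\ref{MSw-lemma}/Proposition~\ref{SOTset} and the piecewise-$C^2$ bookkeeping along parabolic arcs have to be carried out carefully.
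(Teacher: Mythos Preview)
Your arguments for (i), (iv), the forward inclusion of (iii), and the second (direct) approach to (ii) are correct and essentially match the paper's. The paper's proof of (ii) is precisely your alternative: pick $z\in\mathcal T^2_{{\rm dom}\,f}(\overline x,w)$ via Corollary~\ref{Stcone-domf}, push the parabolic arc through $F$ and then through a single active piece $\vartheta_i$ of the PWTD outer function, and read off an explicit element of $\mathcal T^2_{{\rm epi}\,f}$. (Your first route for (ii), transferring the MSQC from ${\rm dom}\,\vartheta$ to ${\rm epi}\,\vartheta$, is plausible but not as automatic as you suggest: you would need to combine the subregularity of $F(\cdot)-{\rm dom}\,\vartheta$ with the strict continuity of $\vartheta$ on its domain to control the $\alpha$-component, and that deserves a line of justification.)

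The real issue is the converse inclusion in (iii). Your decomposition
\[
\Delta^2_\tau f(\overline x|\overline v)(w')=\Delta^2_\tau f(\overline x)(w')+\tfrac{2}{\tau}\big(df(\overline x)(w')-\langle\overline v,w'\rangle\big)
\]
is only meaningful when $df(\overline x)(w')$ is finite, i.e.\ when $w'\in\mathcal T_{{\rm dom}\,f}(\overline x)$. But for a sequence $w^k\to w$, $\tau_k\downarrow 0$ realising the $\liminf$ with $\overline x+\tau_k w^k\in{\rm dom}\,f$, there is no reason that $w^k\in\mathcal T_{{\rm dom}\,f}(\overline x)$; when it is not, $\Delta^2_{\tau_k}f(\overline x)(w^k)=-\infty$ while the correction term is $+\infty$, and your lower-bound strategy collapses. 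The paper sidesteps all of this: since $\overline v\in\widehat\partial f(\overline x)$ gives $df(\overline x)(w')\ge\langle\overline v,w'\rangle$ for every $w'$, and since \cite[Proposition~13.5]{RW98} already yields ${\rm dom}\,d^2f(\overline x|\overline v)\subset\{w:df(\overline x)(w)\le\langle\overline v,w\rangle\}$, the converse inclusion is immediate. No case analysis, no lower bound on $\Delta^2_\tau f(\overline x)(\cdot)$, and none of the ``main obstacle'' you anticipated is needed for this direction.
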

 \begin{proof}  
 {\bf (i)} The equivalence follows the definition of $\mathcal{C}_{\!f}(\overline{x},\overline{v})$ and Lemma \ref{fsubderive-lemma}. For any $w$ with $d\vartheta(F(\overline{x}))(dF(\overline{x})(w))=\langle \overline{v},w\rangle$, we have $dF(\overline{x})(w)\in {\rm dom}\,d\vartheta(F(\overline{x}))=\mathcal{T}_{{\rm dom}\,\vartheta}(F(\overline{x}))$, where the equality is due to Lemma \ref{sderive-PTD} (ii) with $\psi=\vartheta$. By Lemma \ref{tcone-domf}, $w\in\mathcal{T}_{{\rm dom}f}(\overline{x})$.
 
 \noindent
 {\bf(ii)} Fix any $w\in\!\mathcal{T}_{{\rm dom}f}(\overline{x})$. By Corollary \ref{Stcone-domf},  $\mathcal{T}^2_{{\rm dom}\,f}(\overline{x},w)\ne\emptyset$. Pick any $z\in\!\mathcal{T}^2_{{\rm dom}\,f}(\overline{x},w)$. Then, there exist $\tau_k\downarrow 0$ and $\{x^k\}_{k\in\mathbb{N}}\subset{\rm dom}\,f$ with $x^k=\overline{x}+\tau_k w+\frac{1}{2}\tau^2_kz+o(\tau^2_k)$. From the parabolic semidifferentiability of $F$, for all sufficiently large $k$, it holds that
 \[
 \bigcup_{i=1}^sC_i={\rm dom}\,\vartheta\ni F(x^k)=F(\overline{x})+\tau_kdF(\overline{x})(w)
 +\frac{1}{2}\tau^2_k F''(\overline{x};w,z)+o(\tau^2_k).
 \]
 Clearly, there exist $i\in [s]$ and an infinite index set $N\!\subset\mathbb{N}$ such that
 for all $k\in N$,
 \[
  C_i\ni F(x^k)=F(\overline{x})+\tau_kdF(\overline{x})(w)
 +\frac{1}{2}\tau^2_k F''(\overline{x};w,z)+o(\tau^2_k).
 \]
 Note that $dF(\overline{x})(w)\in \mathcal{T}_{C_{i}}(F(\overline{x}))$. From Lemma \ref{fsubderive-lemma} and Lemma \ref{sderive-PTD} (ii), we have
 \begin{equation}\label{temp-dfequa1}
 	df(\overline{x})(w)=d\vartheta(F(\overline{x}))(dF(\overline{x})(w))
 	=\langle\nabla\!\vartheta_{j}(F(\overline{x})),dF(\overline{x})(w)\rangle.
 \end{equation}
 From $F(x^k)\!\in\! C_i$, equation \eqref{temp-dfequa1} and the twice differentiability of $\vartheta_{j}$ at $F(\overline{x})$, 
 \begin{align*}
  f(x^k)&=\vartheta_{i}(F(x^k))=\vartheta_{i}\big(F(\overline{x})+\tau_kdF(\overline{x})(w)+\frac{1}{2}\tau^2_k F''(\overline{x};w,z)+o(\tau^2_k)\big)\\
 	&=\vartheta_{i}(F(\overline{x}))\!+\tau_kdF(\overline{x})(w)\!+\frac{1}{2}\tau^2_k
 	\langle\nabla\!\vartheta_{i}(F(\overline{x})),F''(\overline{x};w,z)\rangle\\
 	&\quad\!+\!\frac{1}{2}\tau^2_k\langle dF(\overline{x})(w),\nabla^2\vartheta_i(F(\overline{x}))dF(\overline{x})(w)\rangle\!+\!o(\tau^2_k).
 \end{align*}
 This, along with $(x^k,f(x^k))\in{\rm epi}\,f$ and $(w,df(\overline{x})(w))\in\mathcal{T}_{{\rm epi}\,f}(\overline{x},f(\overline{x}))$, implies that
 \begin{align*}
 &\big(z,\langle\nabla\!\vartheta_{j}(F(\overline{x})),F''(\overline{x};w,z)\rangle
 \!+\!\langle dF(\overline{x})(w),\nabla^2\vartheta_j(F(\overline{x}))dF(\overline{x})(w)\rangle\big)\\
 &\in\!\mathcal{T}^2_{{\rm epi}f}((\overline{x},f(\overline{x})),(w,df(\overline{x})w)).
 \end{align*}
 Consequently, $\mathcal{T}^2_{{\rm epi}\,f}((\overline{x},F(\overline{x})),(w,dF(\overline{x})w))$ is nonempty, and part (ii) holds.

 \noindent
 {\bf (iii)} Pick any $w\in\mathcal{C}_{f}(\overline{x},\overline{v})$. From part (i), $w\!\in\! \mathcal{T}_{{\rm dom}\,f}(\overline{x})$; while from part (ii), $\mathcal{T}^2_{{\rm epi}\,f}((\overline{x},f(\overline{x})),(w,dF(\overline{x})(w)))\ne\emptyset$. Pick any $(u,\varpi)\in\mathcal{T}^2_{{\rm epi}f}((\overline{x},f(\overline{x})),(w,dF(\overline{x})(w)))$. Then there exist $\tau_k\downarrow 0$ and $(u^k,\varpi_k)\to (u,\varpi)$ such that $(\overline{x},f(\overline{x}))+\tau_k(w,df(\overline{x})(w)) +\frac{1}{2}\tau_k^2(u^k,\varpi_k)\in {\rm epi}\,f$ for each $k\in\mathbb{N}$. Together with $df(\overline{x})(w)=\langle\overline{v},w\rangle$, for each $k\in\mathbb{N}$, 
 \begin{align*}
 \varpi_k
  &\ge\frac{f(\overline{x}+\tau_kw+\frac{1}{2}\tau_k^2 u^k)\!-f(\overline{x})
 			\!-\tau_k df(\overline{x})(w)}{\tau_k^2/2}\\
  &=\frac{f(\overline{x}+\!\tau_k(w\!+\frac{1}{2}\tau_ku^k))\!-f(\overline{x})
 			\!-\tau_k\langle\overline{v},w\!+\frac{1}{2}\tau_ku^k\rangle}{\tau_k^2/2}+\langle\overline{v},u^k\rangle.
 \end{align*}
 Passing the limit $k\to \infty$ to the last inequality yields that $\varpi\ge d^2\!f(\overline{x}|\overline{v})(w)\!+\!\langle\overline{v},u\rangle$, which means that $w\in{\rm dom}\,d^2\!f(\overline{x}|\overline{v})$ and the inclusion $\mathcal{C}_{f}(\overline{x},\overline{v})\subset {\rm dom}\,d^2\!f(\overline{x}|\overline{v})$. For the converse inclusion, as $\overline{v}\in\widehat{\partial}\!f(\overline{x})$, from \eqref{Rsdiff-sderiv} we have $df(\overline{x})(w')\ge\langle\overline{v},w'\rangle$ for all $w'\in\mathbb{X}$; while ${\rm dom}\,d^2\!f(\overline{x}|\overline{v})\subset\{w\in\mathbb{X}\,|\, df(\overline{x})(w)\le\langle \overline{v},w\rangle\}$ by \cite[Proposition 13.5]{RW98}. Thus, ${\rm dom}\,d^2\!f(\overline{x}|\overline{v})\subset\{w\in\mathbb{X}\,|\, df(\overline{x})(w)=\langle\overline{v},w\rangle\}=\mathcal{C}_{f}(\overline{x},\overline{v})$.

 \noindent
 {\bf(iv)} When $dF(\overline{x})(0)=0$, we have $d\vartheta(F(\overline{x}))(dF(\overline{x})(0))=0$ by Lemma \ref{sderive-PTD} (iii), which along with part (i) implies that $0\in\mathcal{C}_{f}(\overline{x},\overline{v})$. Consequently,  $\mathcal{C}_{f}(\overline{x},\overline{v})\ne\emptyset$. 
 \end{proof}
 
 Lemma \ref{ccone-ffun} (iii) extends the conclusion of \cite[Theorem 4.4]{MohMS22-MOR} for the fully subamenable function to a large class of nonconvex composite functions of form \eqref{ffun}. Now we are ready to characterize the parabolic subderivative of $f$ via Proposition \ref{prop-fpsubderive}, which extends the conclusion of \cite[Theorem 4.4]{MohS20-SIOPT} to the composite function of form \eqref{ffun}. 
 \begin{proposition}\label{prop-fpsubderive}
  Let $\overline{x}\in{\rm dom}\,f$. Suppose that the MSQC holds for constraint system $F(x)\in{\rm dom}\,\vartheta$ at $\overline{x}$. Fix any $w\in\mathcal{T}_{{\rm dom}\,f}(\overline{x})$. Then, the following assertions hold.
 \begin{enumerate}
  \item [(i)] For any $z\in \mathbb{X}$, $-\infty<d^2\!f(\overline{x})(w|z)		=d^2\vartheta(F(\overline{x}))(dF(\overline{x})(w)\,|\,F''(\overline{x};w,z))$ with
 		\begin{equation}\label{fdomain-psderiv}
 		{\rm dom}\,d^2f(\overline{x})(w\,|\,\cdot)=\mathcal{T}^2_{{\rm dom}\,f}(\overline{x},w).
 		\end{equation}
 		
 \item [(ii)] The function $f$ is parabolically epi-differentiable at $\overline{x}$ for $w$.
 \end{enumerate}
 \end{proposition}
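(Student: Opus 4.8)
The plan is to reduce (i) to the parabolic subderivative formula for the PWTD outer function $\vartheta$ established in Proposition \ref{psderive-PTD}, and then to read off (ii) and the domain identity \eqref{fdomain-psderiv}. Write $\overline{y}:=F(\overline{x})$ and $\overline{u}:=dF(\overline{x})(w)$; these are well defined since Assumption \ref{ass0} (ii) makes $F$ parabolically semidifferentiable at $\overline{x}\in{\rm dom}\,f\subset\mathcal{O}$, and since $w\in\mathcal{T}_{{\rm dom}\,f}(\overline{x})$ Lemma \ref{tcone-domf} gives $\overline{u}\in\mathcal{T}_{{\rm dom}\,\vartheta}(\overline{y})={\rm dom}\,d\vartheta(\overline{y})$ (Lemma \ref{sderive-PTD} (ii)), so $d\vartheta(\overline{y})(\overline{u})$ is finite, $df(\overline{x})(w)=d\vartheta(\overline{y})(\overline{u})$ by Lemma \ref{fsubderive-lemma}, and $d^2\vartheta(\overline{y})(\overline{u}\,|\,\cdot)$ in the right-hand side of (i) is the proper function described by Proposition \ref{psderive-PTD} (with $\psi=\vartheta$). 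The argument rests on the elementary identity, valid for all $\tau>0$ and $z'\in\mathbb{X}$ (both sides being $+\infty$ when the inner point leaves ${\rm dom}\,\vartheta$),
\[
\Delta^2_{\tau}f(\overline{x})(w|z')=\Delta^2_{\tau}\vartheta(\overline{y})(\overline{u}\,|\,\Delta^2_{\tau}F(\overline{x})(w|z')),
\]
which follows from $F(\overline{x}+\tau w+\frac{1}{2}\tau^2 z')=\overline{y}+\tau\overline{u}+\frac{1}{2}\tau^2\Delta^2_{\tau}F(\overline{x})(w|z')$. Put $\zeta:=F''(\overline{x};w,z)$, so $\Delta^2_{\tau}F(\overline{x})(w|z')\to\zeta$ as $\tau\downarrow0$, $z'\to z$. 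For the inequality $d^2 f(\overline{x})(w|z)\ge d^2\vartheta(\overline{y})(\overline{u}|\zeta)$ I would take any $\tau_k\downarrow0$, $z^k\to z$, set $\xi_k:=\Delta^2_{\tau_k}F(\overline{x})(w|z^k)\to\zeta$, and combine the identity with the definition of $d^2\vartheta(\overline{y})(\overline{u}|\cdot)$ as $\liminf_{\tau\downarrow0,\xi'\to\zeta}\Delta^2_{\tau}\vartheta(\overline{y})(\overline{u}|\xi')$; taking the infimum over such sequences gives the claim. Since $d^2\vartheta(\overline{y})(\overline{u}|\cdot)$ is nowhere $-\infty$ (Proposition \ref{psderive-PTD} (iii)), this already yields $d^2 f(\overline{x})(w|z)>-\infty$, and if $\zeta\notin\mathcal{T}^2_{{\rm dom}\,\vartheta}(\overline{y},\overline{u})$ then $d^2\vartheta(\overline{y})(\overline{u}|\zeta)=\infty$ by Proposition \ref{psderive-PTD} (ii), so both sides of the equality in (i) are $+\infty$.

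The substantive step is the reverse inequality when $\zeta\in\mathcal{T}^2_{{\rm dom}\,\vartheta}(\overline{y},\overline{u})$, equivalently (Corollary \ref{Stcone-domf}) when $z\in\mathcal{T}^2_{{\rm dom}\,f}(\overline{x},w)$. Fix any $\tau_k\downarrow0$. By Proposition \ref{psderive-PTD} (i) some piece $C_{\widetilde{i}}$ of ${\rm dom}\,\vartheta$ satisfies $\widetilde{i}\in J_{\overline{y},\overline{u}}$ and $\zeta\in\mathcal{T}^2_{C_{\widetilde{i}}}(\overline{y},\overline{u})$, so by polyhedrality of $C_{\widetilde{i}}$ (see \cite[Proposition 13.12]{RW98}) $\overline{y}+\tau\overline{u}+\frac{1}{2}\tau^2\zeta\in C_{\widetilde{i}}\subset{\rm dom}\,\vartheta$ for all small $\tau>0$; together with $F(\overline{x}+\tau_k w+\frac{1}{2}\tau_k^2 z)=\overline{y}+\tau_k\overline{u}+\frac{1}{2}\tau_k^2\Delta^2_{\tau_k}F(\overline{x})(w|z)$ and $\Delta^2_{\tau_k}F(\overline{x})(w|z)\to\zeta$ this gives ${\rm dist}_2(F(\overline{x}+\tau_k w+\frac{1}{2}\tau_k^2 z),{\rm dom}\,\vartheta)=o(\tau_k^2)$. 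The MSQC for $F(x)\in{\rm dom}\,\vartheta$ at $\overline{x}$ then furnishes, for all large $k$, a point $\widetilde{x}^k\in{\rm dom}\,f$ with $\|\overline{x}+\tau_k w+\frac{1}{2}\tau_k^2 z-\widetilde{x}^k\|\le\kappa\,{\rm dist}_2(F(\overline{x}+\tau_k w+\frac{1}{2}\tau_k^2 z),{\rm dom}\,\vartheta)=o(\tau_k^2)$. Setting $z^k:=2(\widetilde{x}^k-\overline{x}-\tau_k w)/\tau_k^2\to z$, we have $\overline{x}+\tau_k w+\frac{1}{2}\tau_k^2 z^k=\widetilde{x}^k\in{\rm dom}\,f$, so $\Delta^2_{\tau_k}f(\overline{x})(w|z^k)$ is finite; writing $\eta_k:=\Delta^2_{\tau_k}F(\overline{x})(w|z^k)\to\zeta$, passing to a subsequence along which $F(\widetilde{x}^k)$ stays in one piece $C_{\overline{i}}$ (so that $\overline{i}\in J_{\overline{y},\overline{u}}$ and $\zeta\in\mathcal{T}^2_{C_{\overline{i}}}(\overline{y},\overline{u})$), and expanding $\vartheta=\vartheta_{\overline{i}}$ by its twice differentiability at $\overline{y}$ exactly as in the proof of Proposition \ref{psderive-PTD} (iii) yields $\Delta^2_{\tau_k}f(\overline{x})(w|z^k)\to\langle\nabla\vartheta_{\overline{i}}(\overline{y}),\zeta\rangle+\langle\overline{u},\nabla^2\vartheta_{\overline{i}}(\overline{y})\overline{u}\rangle=d^2\vartheta(\overline{y})(\overline{u}|\zeta)$, where the last equality, and the fact that it is independent of the chosen piece, is exactly what the proof of Proposition \ref{psderive-PTD} (iii) records. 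Since there are only finitely many pieces $C_i$, every subsequence has this same limit, so the whole sequence converges to it, giving $d^2 f(\overline{x})(w|z)\le d^2\vartheta(\overline{y})(\overline{u}|\zeta)$ and hence the equality in (i). Then \eqref{fdomain-psderiv} follows, since $d^2 f(\overline{x})(w|z)<\infty\Leftrightarrow\zeta\in\mathcal{T}^2_{{\rm dom}\,\vartheta}(\overline{y},\overline{u})\Leftrightarrow z\in\mathcal{T}^2_{{\rm dom}\,f}(\overline{x},w)$ by Proposition \ref{psderive-PTD} (ii) and Corollary \ref{Stcone-domf}.

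For (ii) I would use the sequential characterization of parabolic epi-differentiability recalled after Definition \ref{psderiv-def}: one needs ${\rm dom}\,d^2 f(\overline{x})(w|\cdot)\ne\emptyset$, which holds by \eqref{fdomain-psderiv} and Corollary \ref{Stcone-domf}, together with, for every $z\in\mathbb{X}$ and every $\tau_k\downarrow0$, a sequence $z^k\to z$ with $\Delta^2_{\tau_k}f(\overline{x})(w|z^k)\to d^2 f(\overline{x})(w|z)$. When $z\in\mathcal{T}^2_{{\rm dom}\,f}(\overline{x},w)$, the $z^k$ built in the previous paragraph (constructed there for an arbitrary $\tau_k\downarrow0$) does exactly this, since $d^2 f(\overline{x})(w|z)=d^2\vartheta(\overline{y})(\overline{u}|\zeta)$. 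When $z\notin\mathcal{T}^2_{{\rm dom}\,f}(\overline{x},w)$, so $d^2 f(\overline{x})(w|z)=\infty$, I would take $z^k\equiv z$: if $\overline{x}+\tau_k w+\frac{1}{2}\tau_k^2 z\in{\rm dom}\,f$ held for infinitely many $k$, then along a subsequence $F(\overline{x}+\tau_k w+\frac{1}{2}\tau_k^2 z)=\overline{y}+\tau_k\overline{u}+\frac{1}{2}\tau_k^2\Delta^2_{\tau_k}F(\overline{x})(w|z)$ would lie in a single piece $C_{i_0}$, forcing $\zeta\in\mathcal{T}^2_{C_{i_0}}(\overline{y},\overline{u})\subset\mathcal{T}^2_{{\rm dom}\,\vartheta}(\overline{y},\overline{u})$ and hence $z\in\mathcal{T}^2_{{\rm dom}\,f}(\overline{x},w)$ by Corollary \ref{Stcone-domf}, a contradiction; thus $\Delta^2_{\tau_k}f(\overline{x})(w|z)=\infty$ for all large $k$ and converges to $d^2 f(\overline{x})(w|z)$.

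I expect the recovery construction in the second paragraph to be the main obstacle: it requires simultaneously invoking the MSQC to pull the approximately parabolic curve $\tau\mapsto F(\overline{x}+\tau w+\frac{1}{2}\tau^2 z)$ back into ${\rm dom}\,f$ with $o(\tau^2)$ error, and the polyhedrality of the pieces $C_i$ (through Proposition \ref{psderive-PTD}) both to certify that $o(\tau_k^2)$ estimate and to evaluate the resulting limit on a smooth piece $\vartheta_i$. By contrast, the ``$\ge$'' half and part (ii) should be comparatively routine once the displayed identity and Proposition \ref{psderive-PTD} are available.
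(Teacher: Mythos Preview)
Your proof is correct and follows the same overall architecture as the paper's (lower bound via the difference-quotient identity, upper bound via a recovery sequence, then (ii) via the sequential characterization), but the mechanism you use for the upper bound is genuinely different. The paper, after obtaining $z\in\mathcal{T}^{i,2}_{{\rm dom}\,f}(\overline{x},w)$ from Corollary~\ref{Stcone-domf} and picking $x^k\in{\rm dom}\,f$ along that inner second-order tangent, compares $\vartheta(F(x^k))$ with $\vartheta$ at the exactly parabolic points $y^k=\overline{y}+\tau_k\overline{u}+\frac{1}{2}\tau_k^2\zeta$ and bounds the discrepancy using the \emph{strict continuity} of $\vartheta$ relative to ${\rm dom}\,\vartheta$ (Assumption~\ref{ass0}\,(i)). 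You instead apply the MSQC directly (essentially rederiving the relevant part of Corollary~\ref{Stcone-domf}), then pass to a subsequence on which $F(\widetilde{x}^k)$ stays in a single polyhedral piece $C_{\overline{i}}$ and Taylor-expand the smooth selection $\vartheta_{\overline{i}}$; the piece-independence of the resulting limit is exactly what is recorded in the proof of Proposition~\ref{psderive-PTD}\,(iii). Your route avoids invoking the Lipschitz part of Assumption~\ref{ass0}\,(i) at this step and relies only on the PWTD structure of $\vartheta$, at the cost of the extra subsequence-extraction bookkeeping; the paper's route is a one-line Lipschitz estimate but uses an additional hypothesis. For part~(ii) your treatment of the case $z\notin\mathcal{T}^2_{{\rm dom}\,f}(\overline{x},w)$ (via $z^k\equiv z$ and a contradiction forcing $\zeta\in\mathcal{T}^2_{{\rm dom}\,\vartheta}(\overline{y},\overline{u})$) is cleaner than the paper's phrasing through the inner second-order tangent set, but equivalent once Corollary~\ref{Stcone-domf} is in hand.
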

 \begin{proof}  
 {\bf(i)} Since $w\in\mathcal{T}_{{\rm dom}\,f}(\overline{x})$, from Lemma \ref{tcone-domf}, $dF(\overline{x})(w)\in\mathcal{T}_{{\rm dom}\,\vartheta}(F(\overline{x}))$, which by Lemma \ref{sderive-PTD} (ii) for $\psi=\vartheta$ implies that $d\vartheta(F(\overline{x}))(dF(\overline{x})(w))\!<\!\infty$. Together with the properness of $d\vartheta(F(\overline{x}))$ and Lemma \ref{fsubderive-lemma}, $df(\overline{x})(w)$ is finite. Then, by Definition \ref{psderiv-def}, 
 \begin{equation}\label{p-sderive-equa1}
  {\rm dom}\,d^2\!f(\overline{x})(w\,|\,\cdot)\subset\mathcal{T}^2_{{\rm dom}\,f}(\overline{x},w).
 \end{equation}
 Fix any $z\in\mathbb{X}$. By Definition \ref{psderiv-def} and Lemma \ref{fsubderive-lemma}, there are sequences $\tau_k\downarrow 0$ and $z^k\!\to\! z$ such that 
 \[
   d^2\!f(\overline{x})(w|z)
  =\lim_{k\to\infty}\frac{\vartheta(F(\overline{x}+\tau_kw\!+\!\frac{1}{2}\tau_k^2z^k))-\vartheta(F(\overline{x}))\!-\!\tau_k d\vartheta(F(\overline{x}))(dF(\overline{x})(w))}{\tau_k^2/2}.
 \]
 Together with the parabolic semidifferentiability of $F$ at $\overline{x}$, it follows that
 \begin{align*}
  d^2\!f(\overline{x})(w|z)
  & =\lim_{k\to\infty}\frac{\vartheta(F(\overline{x})+\tau_k dF(\overline{x})w\!+\!\frac{1}{2}\tau_k^2 (F''(\overline{x};w,z)+o(\tau^2_k)/\tau_k^2))}{\tau_k^2/2} \\
  &\quad\qquad-\frac{\vartheta(F(\overline{x}))+\tau_k d\vartheta(F(\overline{x}))(dF(\overline{x})(w))}{\tau_k^2/2}\\
  &\ge d^2\vartheta(F(\overline{x}))(dF(\overline{x})(w)|F''(\overline{x};w,z))>-\infty,
 \end{align*}
 where the second inequality is due to Proposition \ref{psderive-PTD} (iii).
 Next we prove the converse inequality by two cases: $z\notin\!\mathcal{T}^2_{{\rm dom}\,f}(\overline{x},w)$ and $z\!\in\!\mathcal{T}^2_{{\rm dom}\,f}(\overline{x},w)$. Write $u=F''(\overline{x};w,z)$. 
 
 \noindent
 {\bf Case 1: $z\notin\mathcal{T}^2_{{\rm dom}\,f}(\overline{x},w)$}. By Corollary \ref{Stcone-domf}, $u\notin\mathcal{T}^2_{{\rm dom}\,\vartheta}(F(\overline{x}),dF(\overline{x})(w))$, which by Proposition \ref{psderive-PTD} (ii) for $\psi=\vartheta$ implies that $d^2\vartheta(F(\overline{x}))(dF(\overline{x})(w)|u)=\infty$. While from \eqref{p-sderive-equa1}, $d^2\!f(\overline{x})(w|z)=\infty$. Thus, 	$d^2\!f(\overline{x})(w|z)\!=d^2\vartheta(F(\overline{x}))(dF(\overline{x})(w)|d^2F(\overline{x})(w|z))$.
 	
 \noindent
 {\bf Case 2: $z\in\mathcal{T}^2_{{\rm dom}\,f}(\overline{x},w)$}. By Corollary \ref{Stcone-domf}, we have $u\in\mathcal{T}^2_{{\rm dom}\,\vartheta}(F(\overline{x}),dF(\overline{x})(w))$ and $z\in\mathcal{T}^{i,2}_{{\rm dom}\,f}(\overline{x},w)$. Pick any $\tau_k\downarrow 0$. From the definition of inner second-order tangent sets, there exists $z^k\to z$ such that for each $k\in\mathbb{N}$, ${\rm dom}f\ni x^k\!:=\overline{x}+\tau_k w+\frac{1}{2}\tau_k^2z^k$. Let $y^k\!:=F(\overline{x})\!+\!\tau_k dF(\overline{x})(w)\!+\!\frac{1}{2}\tau_k^2u$. By Proposition \ref{psderive-PTD} (ii)-(iii),
 \[
   \infty>d^2\vartheta(F(\overline{x}))(dF(\overline{x})(w)|u)
   =\lim_{k\to \infty}\frac{\vartheta(y^k)-\vartheta(F(\overline{x}))
    -\tau_kd\vartheta(F(\overline{x}))(dF(\overline{x})(w))}{\tau_k^2/2},
 \]
 which implies that $y^k\in{\rm dom}\,\vartheta$ for all sufficiently large $k$. Consequently, it holds that
 \begin{align}\label{ParaDerUb}
  d^2\!f(\overline{x})(w|z)
  &\le\liminf_{k\to\infty}\frac{f(x^k)-f(\overline{x})-\tau_k df(\overline{x})(w)}
 		{\tau_k^2/2} \nonumber\\
  &\le\limsup_{k\to\infty}\frac{\vartheta(F(x^k))-\vartheta(F(\overline{x}))-
 			\tau_kd\vartheta(F(\overline{x}))(dF(\overline{x})(w))}{\tau_k^2/2}\nonumber\\
  &=\lim_{k\to\infty}\frac{\vartheta(y^k)-\vartheta(F(\overline{x}))\!-\!
 			\tau_kd\vartheta(F(\overline{x}))(dF(\overline{x})(w))}{\tau_k^2/2}
 		+\limsup_{k\to \infty}\frac{\vartheta(F(x^k))\!-\!\vartheta(y^k)}{\tau_k^2/2}
 		\nonumber\\
  &=d^2\vartheta(F(\overline{x}))(dF(\overline{x})(w)|u)
  +\limsup_{k\to \infty}\frac{\vartheta(F(x^k))\!-\!\vartheta(y^k)}{\tau_k^2/2}
  \nonumber\\
  &\le d^2\vartheta(F(\overline{x}))(dF(\overline{x})(w)|u)
 		+L_{\vartheta}\limsup_{k\to \infty} \frac{2\|F(x^k)-y^k\|_2}{\tau_k^2}
 		\nonumber\ \ {\rm for\ some}\ L_{\vartheta}>0\\
  &= d^2\vartheta(F(\overline{x}))(dF(\overline{x})(w)\,|\,u),
  \end{align}
  where the third inequality is due to Assumption \ref{ass0} (i), the second equality is due to Proposition \ref{psderive-PTD} (iii), and the third one is by the parabolic semidifferentiability of $F$. 
 	
  The above arguments show that the first equality of part (i) holds. To achieve \eqref{fdomain-psderiv}, it suffices to prove that the converse inclusion in \eqref{p-sderive-equa1} holds. Pick any $z\in\mathcal{T}^2_{{\rm dom}\,f}(\overline{x},w)$. From Corollary \ref{Stcone-domf}, $u\in\mathcal{T}_{{\rm dom}\,\vartheta}^2(F(\overline{x}), dF(\overline{x})(w))$, which by Proposition \ref{psderive-PTD} (ii) implies that $d^2\vartheta(F(\overline{x}))(dF(\overline{x})(w)|u)\!<\!\infty$. Together with the first equality of part (i), we have $z\in{\rm dom}\,d^2\!f(\overline{x})(w|\cdot)$, so the converse inclusion in \eqref{p-sderive-equa1} follows.
 	
 \noindent
  {\bf(ii)} By Corollary \ref{Stcone-domf} and part (i), $\emptyset\ne\mathcal{T}^{i,2}_{{\rm dom}\,f}(\overline{x},w)=\mathcal{T}^2_{{\rm dom}\,f}(\overline{x},w)={\rm dom}\,d^2f(\overline{x})(w\,|\,\cdot)$. Consider any $z\in\mathbb{X}$. Pick any $\tau_k\downarrow 0$. From the discussions after Definition \ref{psderiv-def}, it suffices to argue that there exists a sequence $z^k\to z$ such that $\Delta_{\tau_k}^2f(\overline{x})(w|z^k)\to d^2f(\overline{x})(w|z)$ as $k\to\infty$. Indeed, when $z\notin\mathcal{T}^{i,2}_{{\rm dom}\,f}(\overline{x},w)={\rm dom}\,d^2f(\overline{x})(w\,|\,\cdot)$, by the definition of inner second-order tangent sets, for any $z^k\!\to\! z$,  $\overline{x}+\tau_kw+\frac{1}{2}\tau_k^2z^k\notin{\rm dom}\,f$. By the definition of the parabolic difference quotients of $f$ at $\overline{x}$ for $w$, it holds that
  \[
    \Delta_{\tau_k}^2f(\overline{x})(w|z^k)=\frac{f(\overline{x}+\tau_kw+\frac{1}{2}\tau_k^2z^k)-f(\overline{x})-\tau_kdf(\overline{x})(w)}{\tau_k^2/2}=\infty
    =d^2f(\overline{x})(w|z)
  \] 
  where the last equality is due to $z\notin{\rm dom}\,d^2f(\overline{x})(w\,|\,\cdot)$. When $z\in\mathcal{T}^{i,2}_{{\rm dom}\,f}(\overline{x},w)$, there exists $z^k\to z$ such that ${\rm dom}\,f\ni x^k=\overline{x}+\tau_kw+\frac{1}{2}\tau_k^2z^k$ for each $k$, and using the same arguments as those for the above \eqref{ParaDerUb} leads to 
  \begin{align*}
   \lim_{k\to\infty}\Delta_{\tau_k}^2f(\overline{x})(w|z^k)
   &\le\limsup_{k\to\infty}\frac{\vartheta(F(x^k))-\vartheta(F(\overline{x}))-
  	\tau_kd\vartheta(F(\overline{x}))(dF(\overline{x})(w))}{\tau_k^2/2}\\
   &\le d^2\vartheta(F(\overline{x}))(dF(\overline{x})(w)|F''(\overline{x};w,z))
   =d^2f(\overline{x})(w|z)
  \end{align*}
  where the equality is due to part (i). Note that $d^2f(\overline{x})(w|z)\le\lim_{k\to\infty}\Delta_{\tau_k}^2f(\overline{x})(w|z^k)$. 
  Then, $\lim_{k\to\infty}\Delta_{\tau_k}^2f(\overline{x})(w|z^k)\to d^2f(\overline{x})(w|z)$ as $k\to\infty$. The proof is completed. 
  \end{proof} 
 
  By the parabolic epi-differentiability of $f$, we can achieve its twice epi-differentiability at $\overline{x}\in{\rm dom}\,f$ for $\overline{v}\in\partial f(\overline{x})$ under the parabolic regularity assumption. Its proof is similar to that of \cite[Theorem 3.8]{MohS20-SIOPT}, and we include it for completeness.
 \begin{theorem}\label{theorem-tepi} 
 Fix any $\overline{x}\in{\rm dom}\,f$ and  $\overline{v}\in\partial\!f(\overline{x})$. Suppose that the MSQC holds for constraint system $F(x)\in{\rm dom}\,\vartheta$ at $\overline{x}$, that $\overline{v}\in\widehat{\partial}\!f(\overline{x})$, and that $f$ is parabolically regular at $\overline{x}$ for $\overline{v}$. Then, the function $f$ is properly twice epi-differentiable at $\overline{x}$ for $\overline{v}$ with 
 \begin{equation}\label{equa-theorem2}
   d^2f(\overline{x}|\overline{v})(w)
 	=\left\{\begin{array}{cl}
 	\min_{z\in\mathbb{X}}\big\{d^2\!f(\overline{x})(w|z)-\langle\overline{v},z\rangle\}&{\rm if}\ 
 	w\in\mathcal{C}_{\!f}(\overline{x},\overline{v}),\\
 	\infty &{\rm otherwise}.
  \end{array}\right.  
 \end{equation}
 \end{theorem}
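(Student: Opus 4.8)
The plan is to derive \eqref{equa-theorem2} from a general principle: for a proper function that is both parabolically epi-differentiable at $\overline{x}$ for $w$ and parabolically regular at $\overline{x}$ for $\overline{v}$, the second subderivative coincides with the infimal value $\inf_z\{d^2f(\overline{x})(w|z)-\langle\overline{v},z\rangle\}$, and moreover the infimum is attained. First I would establish the domain relation ${\rm dom}\,d^2f(\overline{x}|\overline{v})=\mathcal{C}_{\!f}(\overline{x},\overline{v})$, which is exactly Lemma~\ref{ccone-ffun}(iii) together with the hypothesis $\overline{v}\in\widehat{\partial}f(\overline{x})$; this accounts for the ``$\infty$ otherwise'' branch and for the restriction $df(\overline{x})(w)=\langle\overline{v},w\rangle$ being automatic on $\mathcal{C}_{\!f}(\overline{x},\overline{v})$.

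Next, fix $w\in\mathcal{C}_{\!f}(\overline{x},\overline{v})$. The inequality $d^2f(\overline{x}|\overline{v})(w)\ge\inf_z\{d^2f(\overline{x})(w|z)-\langle\overline{v},z\rangle\}$ is the easy direction: it is precisely \cite[Proposition 13.64]{RW98} (used already in the proof of the last Proposition), which holds with no regularity needed. For the reverse inequality I would invoke parabolic regularity directly in the form given in Definition~\ref{pregular-def}: it states $\inf_z\{d^2f(\overline{x})(w|z)-\langle\overline{v},z\rangle\}=d^2f(\overline{x}|\overline{v})(w)$ for every such $w$. So the equality of values is immediate from the hypotheses; the substantive content is that the ``$\inf$'' is a ``$\min$'', i.e.\ attainment, and that $\Delta^2_\tau f(\overline{x}|\overline{v})$ genuinely epi-converges to $d^2f(\overline{x}|\overline{v})$ (not merely pointwise $\liminf$).

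For attainment and epi-convergence I would argue as follows. By Proposition~\ref{prop-fpsubderive}(i), $z\mapsto d^2f(\overline{x})(w|z)$ is proper with domain $\mathcal{T}^2_{{\rm dom}\,f}(\overline{x},w)\ne\emptyset$, and it equals $d^2\vartheta(F(\overline{x}))(dF(\overline{x})(w)\,|\,F''(\overline{x};w,\cdot))$; since $\vartheta$ is PWTD its parabolic subderivative is PWLQ (Proposition~\ref{psderive-PTD}), hence $z\mapsto d^2f(\overline{x})(w|z)-\langle\overline{v},z\rangle$ is bounded below on its (nonempty polyhedral-structured) domain by a quadratic and its infimum is finite and attained — this gives the $\min$. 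For epi-convergence, the $\liminf$-inequality $\liminf_k\Delta^2_{\tau_k}f(\overline{x}|\overline{v})(w^k)\ge d^2f(\overline{x}|\overline{v})(w)$ whenever $w^k\to w$ is automatic from the definition of $d^2f(\overline{x}|\overline{v})$. For the recovery sequence, take $\overline z$ attaining the minimum; parabolic regularity (the ``$\limsup_k\|w^k-w\|/\tau_k<\infty$'' clause) combined with the parabolic epi-differentiability of $f$ at $\overline{x}$ for $w$ (Proposition~\ref{prop-fpsubderive}(ii)) lets me build $w^k:=w+\tfrac12\tau_k z^k$ with $z^k\to\overline z$ along which $\Delta^2_{\tau_k}f(\overline{x}|\overline{v})(w^k)\to d^2f(\overline{x})(w|\overline z)-\langle\overline{v},\overline z\rangle=d^2f(\overline{x}|\overline{v})(w)$; the algebraic identity $\Delta^2_\tau f(\overline{x}|\overline{v})(w+\tfrac12\tau z)=\Delta^2_\tau f(\overline{x})(w|z)-\langle\overline{v},z\rangle$ (valid because $df(\overline{x})(w)=\langle\overline{v},w\rangle$ on the critical cone) is the bridge. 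Finally, properness of $d^2f(\overline{x}|\overline{v})$ follows since it is $>-\infty$ everywhere (by the $\liminf$ bound and $\overline{v}\in\widehat{\partial}f(\overline{x})$) and finite at some point of $\mathcal{C}_{\!f}(\overline{x},\overline{v})$ by the $\min$ formula.

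The main obstacle I anticipate is the recovery-sequence construction: one must carefully chase the parabolic difference quotients through the composition $\vartheta\circ F$ using the parabolic semidifferentiability of $F$ and the PWLQ structure of $d^2\vartheta$, and verify that the rate restriction from parabolic regularity is compatible with the $z^k\to\overline z$ needed for parabolic epi-convergence of $f$. Controlling the cross term $\vartheta(F(x^k))-\vartheta(y^k)$ as in the estimate \eqref{ParaDerUb}, now with the minimizer $\overline z$ in place of an arbitrary $z$, is where the strict continuity of $\vartheta$ relative to its domain (Assumption~\ref{ass0}(i)) does the essential work.
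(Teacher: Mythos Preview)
Your overall architecture matches the paper's: split on whether $w\in\mathcal{C}_{\!f}(\overline{x},\overline{v})$, use Lemma~\ref{ccone-ffun}(iii) with $\overline{v}\in\widehat{\partial}f(\overline{x})$ for the domain identification, invoke parabolic regularity for the value equality, and build the recovery sequence $w^k=w+\tfrac12\tau_k z^k$ from parabolic epi-differentiability of $f$ (Proposition~\ref{prop-fpsubderive}(ii)) together with the algebraic identity you state. That part is correct and essentially the paper's proof.

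There is, however, a genuine gap in your attainment step. You claim that because the parabolic subderivative of $\vartheta$ is PWLQ, the map $z\mapsto d^2f(\overline{x})(w|z)-\langle\overline{v},z\rangle$ has a ``polyhedral-structured'' domain and attains its infimum. But $d^2f(\overline{x})(w|z)=d^2\vartheta(F(\overline{x}))(dF(\overline{x})(w)\,|\,F''(\overline{x};w,z))$, and $z\mapsto F''(\overline{x};w,z)$ is in general neither linear nor piecewise linear (for instance, in the $q$-cone example it can equal $\|z_2\|_q$). After composing, neither the domain $\mathcal{T}^2_{{\rm dom}\,f}(\overline{x},w)$ nor the objective need be polyhedral or PWLQ in $z$, so no structural argument yields attainment. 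The paper obtains $\overline{z}$ differently: it uses the \emph{rate} characterization in Definition~\ref{pregular-def} (via \cite[Proposition~3.6]{MohS20-SIOPT}) to pick realizing sequences $\tau_k\downarrow 0$, $w^k\to w$ with $\limsup_k\|w^k-w\|/\tau_k<\infty$, sets $z^k:=2(w^k-w)/\tau_k$, and extracts $\overline{z}$ as a cluster point of this bounded sequence; a short computation then shows $\overline{z}$ attains the infimum. You mention the rate clause, but you place it in the recovery-sequence step rather than where it is actually needed.

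A smaller point: the ``main obstacle'' you anticipate is not one. Once $\overline{z}$ is in hand, the recovery sequence comes for free from the parabolic epi-differentiability of $f$ itself, which was already established in Proposition~\ref{prop-fpsubderive}(ii); the cross-term estimate \eqref{ParaDerUb} and the strict continuity of $\vartheta$ were consumed there and need not be reinvoked.
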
 	
 \begin{proof}
  Fix any $w\in\mathbb{X}$ and pick any $\tau_k\downarrow 0$. We proceed the arguments by two cases. 
 	
 \noindent
 {\bf Case 1:} $w\in\mathcal{C}_{\!f}(\overline{x},\overline{v})$. Now, from $\overline{v}\in\widehat{\partial}\!f(\overline{x})$ and Lemma \ref{ccone-ffun} (iii), it follows that ${\rm dom}\,d^2f(\overline{x}|\overline{v})=\mathcal{C}_{f}(\overline{x},\overline{v})$.  
 Since $f$ is parabolically regular at $\overline{x}$ for $\overline{v}$, by following the same arguments as those for the first part of the proof of \cite[Proposition 3.6]{MohS20-SIOPT}, there exists $\overline{z}\in{\rm dom}\,d^2\!f(\overline{x})(w\,|\,\cdot)$ such that
 \begin{equation}\label{equa1-critcone}  d^2f(\overline{x})(w|\overline{z})-\langle\overline{v},\overline{z}\rangle=d^2\!f(\overline{x}|\overline{v})(w).
 \end{equation}		
 Note that $w\in\mathcal{T}_{{\rm dom}\,f}(\overline{x})$. By Proposition \ref{prop-fpsubderive} (ii), $f$ is parabolically epi-differentiable at $\overline{x}$ for $w$, so we can find a sequence $z^k\to\overline{z}$ such that 
 \begin{align*}
  d^2\!f(\overline{x})(w|\overline{z})
  &=\lim_{k\to\infty}\frac{f(\overline{x}+\tau_kw+\frac{1}{2}\tau_k^2z^k)-f(\overline{x})-\tau_kdf(\overline{x})(w)}{\tau_k^2/2}\\
  &=\lim_{k\to\infty}\frac{f(\overline{x}+\tau_kw+\frac{1}{2}\tau_k^2z^k)-f(\overline{x})-\tau_k \langle \overline{v},w\rangle}{\tau_k^2/2}\\
  &=\lim_{k\to\infty}(\Delta_{\tau_k}^2f(\overline{x}|\overline{v})(w^k)+\langle\overline{v},z^k\rangle)\ \ {\rm with}\ w^k=w+\frac{1}{2}\tau_kz^k,
 \end{align*}
 where the second equality is due to $df(\overline{x})(w)=\langle\overline{v},w\rangle$ implied by $w\in \mathcal{C}_f(\overline{x},\overline{x})$. Together with \eqref{equa1-critcone}, it follows that $d^2\!f(\overline{x}|\overline{v})(w)=\lim_{k\to\infty}\Delta_{\tau_k}^2f(\overline{x}|\overline{v})(w^k)$.
 	
 \noindent
 {\bf Case 2:} $w\notin\mathcal{C}_{\!f}(\overline{x},\overline{v})$. Now from $\overline{v}\in\widehat{\partial}\!f(\overline{x})$ and Lemma \ref{ccone-ffun} (iii), $w\notin{\rm dom}\,d^2\!f(\overline{x}|\overline{v})$, i.e.,  $d^2\!f(\overline{x}|\overline{v})(w)=\infty$. Take $w^k=w$ for each $k\in\mathbb{N}$. By Definition \ref{ssderiv-def}, it holds that
 \[
  \infty=d^2\!f(\overline{x}|\overline{v})(w)\le\liminf_{k\to\infty}\Delta_{\tau_k}^2f(\overline{x}|\overline{v})(w^k)\le\infty,
  \]
 which shows that the sequence $w^k$ is such that		$d^2\!f(\overline{x}|\overline{v})(w)=\lim_{k\to\infty}\Delta_{\tau_k}^2f(\overline{x}|\overline{v})(w^k)$. 
 
 According to the discussion after Definition \ref{ssderiv-def}, the above arguments show that $f$ is twice epi-differentiable at $\overline{x}$ for $\overline{v}$. Moreover, equality \eqref{equa-theorem2} is implied by the above proof and \cite[Proposition 13.64]{RW98}, which in turn shows that $d^2f(\overline{x}|\overline{v})$ is proper.
 \end{proof} 

 Theorem \ref{theorem-tepi} shows that the parabolic regularity of $f$ is the key to the proper twice epi-differentiability of $f$. The next section is dedicated to this property.
 
 \section{Parabolic regularity of $f$}\label{sec5}

 First of all, we provide a convenient condition to identify the parabolic regularity of $f$. 
 \begin{theorem}\label{PropSuff}
  Fix any $\overline{x}\in{\rm dom}\,f$ and  $\overline{v}\in\partial\!f(\overline{x})$. Suppose that $\partial\vartheta(F(\overline{x}))=\widehat{\partial}\vartheta(F(\overline{x}))$ and the MSQC holds for system $F(x)\!\in\!{\rm dom}\,\vartheta$ at $\overline{x}$.   Then $f$ is parabolically regular at $\overline{x}$ for $\overline{v}$ if there is $\overline{\xi}\in\Lambda_{\overline{x},\overline{v}}\!:=\!\{\xi\in\partial\vartheta(F(\overline{x}))\,|\, \langle \xi,dF(\overline{x})(w')\rangle\ge\langle \overline{v},w'\rangle\ \forall w'\!\in\mathbb{X}\}$ such that
  \begin{equation}\label{suff-condition}
  \!\inf_{z\in\mathbb{R}^n}\!\big\{d(d\vartheta(F(\overline{x}))(dF(\overline{x})(w)))(F''(\overline{x};w,z))\!-\!\langle\overline{v},z\rangle\big\}
  \!=d^2(\overline{\xi}F)(\overline{x})(w)\ \ \forall w\in\mathcal{C}_{f}(\overline{x},\overline{v}).
 \end{equation}
 If  $\partial\vartheta(F(\overline{x}))\!=\!\widehat{\partial}\vartheta(F(\overline{x}))$ is replaced by the regularity of $\vartheta$ at $F(\overline{x})$, condition \eqref{suff-condition} becomes  
  \begin{equation}\label{suff-condition2}
  \!\inf_{z\in\mathbb{R}^n}\!\Big\{\sup_{u\in\mathcal{A}_{\vartheta}(F(\overline{x}),dF(\overline{x})(w))}\!\langle u,F''(\overline{x};w,z)\rangle-\langle\overline{v},z\rangle\Big\}
 		=d^2(\overline{\xi}F)(\overline{x})(w)\ \ \forall w\in\mathcal{C}_{f}(\overline{x},\overline{v}),
 \end{equation}
 where $\mathcal{A}_{\vartheta}(F(\overline{x}),dF(\overline{x})(w))\!=\!\big\{u\in\partial\vartheta(F(\overline{x}))\,|\,d\vartheta(F(\overline{x}))(dF(\overline{x})(w))=\langle u,dF(\overline{x})(w)\rangle\big\}$.  
 \end{theorem}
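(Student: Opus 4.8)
The plan is to verify Definition \ref{pregular-def} directly: for each $w\in\mathcal{C}_f(\overline{x},\overline{v})$ we must show $\inf_{z\in\mathbb{X}}\{d^2f(\overline{x})(w|z)-\langle\overline{v},z\rangle\}=d^2f(\overline{x}|\overline{v})(w)$. Throughout, the finite value of the left-hand side equals the quantity appearing in \eqref{suff-condition}: indeed, by Proposition \ref{prop-fpsubderive}(i), $d^2f(\overline{x})(w|z)=d^2\vartheta(F(\overline{x}))(dF(\overline{x})(w)\,|\,F''(\overline{x};w,z))$, and since $\vartheta$ is PWTD with $\partial\vartheta(F(\overline{x}))=\widehat{\partial}\vartheta(F(\overline{x}))$ we may invoke Proposition \ref{psderive-PTD}(iv) with $\psi=\vartheta$, $y=F(\overline{x})$, to write $d^2\vartheta(F(\overline{x}))(dF(\overline{x})(w)\,|\,\zeta)=d^2\vartheta(F(\overline{x}))(dF(\overline{x})(w))+d\big(d\vartheta(F(\overline{x}))(dF(\overline{x})(w))\big)(\zeta)$. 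The constant term $d^2\vartheta(F(\overline{x}))(dF(\overline{x})(w))$ is $\langle dF(\overline{x})(w),\nabla^2\vartheta_k(F(\overline{x}))dF(\overline{x})(w)\rangle$ for any active $k$, and crucially it equals $d^2(\overline{\xi}F)(\overline{x})(w)-\langle\text{(something)}\rangle$ only after one carefully matches it against the chain-rule expansion of $(\overline{\xi}F)$; so the first bookkeeping step is to expand $d^2(\overline{\xi}F)(\overline{x})(w)$ using the parabolic semidifferentiability of $F$ and the fact that $\overline{\xi}\in\partial\vartheta(F(\overline{x}))$ selects one of the pieces $\vartheta_k$ with $\overline{\xi}=\nabla\vartheta_k(F(\overline{x}))$.

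Granting the reformulation, hypothesis \eqref{suff-condition} says exactly that $\inf_{z}\{d^2f(\overline{x})(w|z)-\langle\overline{v},z\rangle\}=d^2(\overline{\xi}F)(\overline{x})(w)$ is finite for all $w\in\mathcal{C}_f(\overline{x},\overline{v})$. The remaining task is the two inequalities between this infimum and $d^2f(\overline{x}|\overline{v})(w)$. The inequality $\inf_z\{d^2f(\overline{x})(w|z)-\langle\overline{v},z\rangle\}\ge d^2f(\overline{x}|\overline{v})(w)$ is the easy, always-true direction, coming from \cite[Proposition 13.64]{RW98}. For the reverse inequality $d^2f(\overline{x}|\overline{v})(w)\ge d^2(\overline{\xi}F)(\overline{x})(w)$ — which is the substance of parabolic regularity — the plan is to use $\overline{\xi}\in\Lambda_{\overline{x},\overline{v}}$ to produce a \emph{lower} estimate of the second subderivative. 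The defining property of $\Lambda_{\overline{x},\overline{v}}$, namely $\langle\overline{\xi},dF(\overline{x})(w')\rangle\ge\langle\overline{v},w'\rangle$ for all $w'$, together with $\overline{\xi}\in\widehat{\partial}\vartheta(F(\overline{x}))$ (via \eqref{Rsdiff-sderiv}, $d\vartheta(F(\overline{x}))(\cdot)\ge\langle\overline{\xi},\cdot\rangle$), lets us bound, for any $\tau_k\downarrow0$ and $w^k\to w$,
\[
\Delta^2_{\tau_k}f(\overline{x}|\overline{v})(w^k)
=\frac{\vartheta(F(\overline{x}+\tau_kw^k))-\vartheta(F(\overline{x}))-\tau_k\langle\overline{v},w^k\rangle}{\tau_k^2/2}
\ge\frac{(\overline{\xi}F)(\overline{x}+\tau_kw^k)-(\overline{\xi}F)(\overline{x})-\tau_k\langle\overline{v},w^k\rangle}{\tau_k^2/2},
\]
where the last step uses $\vartheta(y)\ge\vartheta(F(\overline{x}))+\langle\overline{\xi},y-F(\overline{x})\rangle$ — this subgradient-type inequality holds \emph{locally} near $F(\overline{x})$ because on each polyhedral piece $C_i$ active at $F(\overline{x})$ one has $\vartheta_i\ge\vartheta_i(F(\overline{x}))+\langle\nabla\vartheta_i(F(\overline{x})),\cdot\rangle+o$, and $\partial\vartheta(F(\overline{x}))=\widehat{\partial}\vartheta(F(\overline{x}))$ forces $\overline{\xi}$ to be a common affine minorant of all active pieces up to second order. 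Taking liminf over $k$ and minimizing over the two terms $\langle\overline{v}-\langle\overline{\xi},dF(\overline{x})(\cdot)\rangle\rangle$ (nonpositive on $\mathbb{X}$) and the genuine curvature term gives $d^2f(\overline{x}|\overline{v})(w)\ge d^2(\overline{\xi}F)(\overline{x})(w)$, since $w\in\mathcal{C}_f(\overline{x},\overline{v})$ kills the first-order slack.

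Combining the two inequalities yields $d^2f(\overline{x}|\overline{v})(w)=\inf_z\{d^2f(\overline{x})(w|z)-\langle\overline{v},z\rangle\}$ for every $w\in\mathcal{C}_f(\overline{x},\overline{v})$, i.e.\ parabolic regularity. Finally, when $\vartheta$ is regular at $F(\overline{x})$ (rather than merely $\partial\vartheta=\widehat{\partial}\vartheta$ there), Proposition \ref{psderive-PTD}(iv) gives the sharper formula $d^2\vartheta(F(\overline{x}))(dF(\overline{x})(w)\,|\,\zeta)=d^2\vartheta(F(\overline{x}))(dF(\overline{x})(w))+\sup_{u\in\mathcal{A}_\vartheta(F(\overline{x}),dF(\overline{x})(w))}\langle u,\zeta\rangle$, and substituting $\zeta=F''(\overline{x};w,z)$ and subtracting $\langle\overline{v},z\rangle$ turns \eqref{suff-condition} into \eqref{suff-condition2} verbatim; the rest of the argument is unchanged. \textbf{The main obstacle} I anticipate is the first paragraph's bookkeeping: cleanly identifying $d^2(\overline{\xi}F)(\overline{x})(w)$ with the constant curvature term $d^2\vartheta(F(\overline{x}))(dF(\overline{x})(w))$ (it is \emph{not} literally equal — one must account for the first-order contribution $\langle\overline{\xi},F''(\overline{x};w,\cdot)\rangle$ that gets swallowed into the $\inf_z$), and making sure the local minorant inequality $\vartheta\ge(\overline{\xi}F)$-comparison survives the nonconvexity of $\vartheta$ by restricting to a neighborhood where only the pieces active at $F(\overline{x})$ matter.
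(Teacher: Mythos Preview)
Your overall sandwich strategy matches the paper's: bound $d^2f(\overline{x}|\overline{v})(w)$ from above by $\inf_z\{d^2f(\overline{x})(w|z)-\langle\overline{v},z\rangle\}$ (Proposition~13.64 in \cite{RW98}) and from below using $\overline{\xi}\in\Lambda_{\overline{x},\overline{v}}$, then invoke the hypothesis to close the gap. But two concrete things go wrong in the execution.

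\textbf{The lower bound step is flawed.} You write $\vartheta(y)\ge\vartheta(F(\overline{x}))+\langle\overline{\xi},y-F(\overline{x})\rangle$ ``locally'' and use it to replace $\vartheta(F(\overline{x}+\tau_kw^k))$ by $(\overline{\xi}F)(\overline{x}+\tau_kw^k)+{\rm const}$. This affine minorant inequality is simply false for nonconvex PWTD $\vartheta$: take $\vartheta(t)=-t^2$, $F(\overline{x})=0$, $\overline{\xi}=0\in\partial\vartheta(0)$. The ``$+o$'' you append in the justification is \emph{second order}, so after dividing by $\tau_k^2/2$ it contributes a nonvanishing curvature term that you cannot discard. (Relatedly, your claim that $\overline{\xi}=\nabla\vartheta_k(F(\overline{x}))$ for some active piece is unjustified; elements of $\widehat{\partial}\vartheta(F(\overline{x}))$ need not be gradients of any single piece.) The paper avoids linearizing $\vartheta$ altogether: it only uses $\overline{\xi}\in\Lambda_{\overline{x},\overline{v}}$ to replace $\langle\overline{v},w'\rangle$ by $d(\overline{\xi}F)(\overline{x})(w')=\langle\overline{\xi},\Delta_\tau F(\overline{x})(w')\rangle$, then splits the difference quotient algebraically into \emph{two} pieces, one converging to $d^2\vartheta(F(\overline{x})|\overline{\xi})(dF(\overline{x})(w))$ and the other to $d^2(\overline{\xi}F)(\overline{x})(w)$. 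This yields the lower bound
\[
d^2f(\overline{x}|\overline{v})(w)\ \ge\ d^2\vartheta(F(\overline{x})|\overline{\xi})(dF(\overline{x})(w))+d^2(\overline{\xi}F)(\overline{x})(w),
\]
and the first summand is then identified with $d^2\vartheta(F(\overline{x}))(dF(\overline{x})(w))$ via Proposition~\ref{ssderive-PTD}(ii), after checking $dF(\overline{x})(w)\in\mathcal{C}_\vartheta(F(\overline{x}),\overline{\xi})$ (which needs both $w\in\mathcal{C}_f(\overline{x},\overline{v})$ and $\overline{\xi}\in\widehat{\partial}\vartheta(F(\overline{x}))$).

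\textbf{The bookkeeping is off by the same curvature term on the upper-bound side.} You assert that hypothesis~\eqref{suff-condition} says $\inf_z\{d^2f(\overline{x})(w|z)-\langle\overline{v},z\rangle\}=d^2(\overline{\xi}F)(\overline{x})(w)$. It does not: by Proposition~\ref{psderive-PTD}(iv) one has $d^2f(\overline{x})(w|z)=d^2\vartheta(F(\overline{x}))(dF(\overline{x})(w))+d(d\vartheta(F(\overline{x}))(dF(\overline{x})(w)))(F''(\overline{x};w,z))$, so the infimum equals $d^2\vartheta(F(\overline{x}))(dF(\overline{x})(w))+\big[\text{LHS of \eqref{suff-condition}}\big]$, and \eqref{suff-condition} equates only the bracketed part with $d^2(\overline{\xi}F)(\overline{x})(w)$. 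The extra $d^2\vartheta(F(\overline{x}))(dF(\overline{x})(w))$ is exactly the term you dropped from the lower bound, so the two errors would cancel \emph{if} each bound were otherwise valid---but your derivation of the lower bound is not. The fix is to carry the $d^2\vartheta$ curvature term explicitly on both sides, as the paper does in \eqref{Lbound-scadineq} and \eqref{Ubound-scadineq}.
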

 \begin{proof}
 Fix any $w\in\mathcal{C}_{f}(\overline{x},\overline{v})$. We claim that the following two inequalities hold: 
 \begin{align}\label{Uboundineq}
  d^2\!f(\overline{x}|\overline{v})(w)
  &\le\inf_{z\in\mathbb{X}}
 	\big\{d^2\vartheta(F(\overline{x}))(dF(\overline{x})(w)|F''(\overline{x};w,z))
 		-\langle\overline{v},z\rangle\big\}<\infty,\\
 \label{Lboundineq}
  d^2\!f(\overline{x}|\overline{v})(w)
  &\ge\sup_{\xi\in\Lambda_{\overline{x},\overline{v}}}
  \big\{d^2\vartheta(F(\overline{x})|\xi)(dF(\overline{x})(w))+d^2(\xi F)(\overline{x})(w)\big\}.
  \end{align}
  Indeed, by Lemma \ref{ccone-ffun} (i), $w\in\mathcal{T}_{{\rm dom}\,f}(\overline{x})$; while by Proposition \ref{prop-fpsubderive} (i), for any $z\in\mathbb{X}$, $d^2\!f(\overline{x})(w|z)=d^2\vartheta(F(\overline{x}))(dF(\overline{x})(w)|F''(\overline{x};w,z))$. Along with \cite[Proposition 13.64]{RW98}, we get the first inequality in \eqref{Uboundineq}. From \eqref{fdomain-psderiv} and Corollary \ref{Stcone-domf}, ${\rm dom}\,d^2\!f(\overline{x})(w|\cdot)\ne\emptyset$, so there exists $z\in\mathbb{X}$ such that $d^2\vartheta(F(\overline{x}))(dF(\overline{x})(w)|F''(\overline{x};w,z))<\infty$. Thus, the second inequality in \eqref{Uboundineq} follows. To achieve inequality \eqref{Lboundineq}, pick any $\xi\in\Lambda_{\overline{x},\overline{v}}$. Recall that $F$ is semidifferentiable. From \cite[Theorem 7.21]{RW98}, it follows that for any $u\in \mathbb{X}$,
  \[
   dF(\overline{x})(u)=\lim_{\tau\downarrow 0,u'\to u}\Delta_{\tau}F(\overline{x})(u')\ \ {\rm with}\ \ \Delta_{\tau}F(\overline{x})(u'):=\tau^{-1}[F(\overline{x}+\tau u')-F(\overline{x})], 
  \]
  which implies that $\langle \xi, dF(\overline{x})(u)\rangle=d(\xi F)(\overline{x})(u)$ for any $u\in\mathbb{X}$. By the definition of the second-order difference quotients of $f$ at $\overline{x}$, for any $\tau>0$ and any $w'\in\mathbb{X}$, 
  \begin{align*}
   \Delta_{\tau}^2f(\overline{x}|\overline{v})(w')
   &=2\tau^{-2}[\vartheta(F(\overline{x}+\tau w'))-\vartheta(F(\overline{x}))
  	 -\tau\langle \overline{v},w'\rangle]\nonumber\\
   &\ge2\tau^{-2}[\vartheta(F(\overline{x})+\tau\Delta_{\tau}F(\overline{x})(w'))
 	-\vartheta(F(\overline{x}))-\tau d(\xi F)(\overline{x})(w')]\nonumber\\
   &=2\tau^{-2}[\vartheta(F(\overline{x})+\tau\Delta_{\tau}F(\overline{x})(w'))
 	-\vartheta(F(\overline{x}))-\tau\langle\xi,\Delta_{\tau}F(\overline{x})(w')\rangle]\nonumber\\
   &\quad+2\tau^{-2}[(\xi F)(\overline{x}+\tau w')-(\xi F)(\overline{x})
 		-\tau d(\xi F)(\overline{x})(w')]
  \end{align*}
 where the inequality is due to $\xi\in\Lambda_{\overline{x},\overline{v}}$ and $\langle \xi,dF(\overline{x})(w')\rangle=d(\xi F)(\overline{x})(w')$. From Definition \ref{ssderiv-def} and $\lim_{\tau\downarrow 0,w'\to w}\Delta_{\tau}F(\overline{x})(w')=dF(\overline{x})(w)$, it follows that 
 \[
  d^2\!f(\overline{x}|\overline{v})(w)\ge d^2\vartheta(F(\overline{x})|\xi)(dF(\overline{x})(w)) +d^2(\xi F)(\overline{x})(w).
 \]
 This, by the arbitrariness of $\xi$ in $\Lambda_{\overline{x},\overline{v}}$, implies that  inequality \eqref{Lboundineq} holds. 

 Now let $\overline{\xi}\in \Lambda_{\overline{x},\overline{v}}$ be such that \eqref{suff-condition} holds. As $\widehat{\partial}\vartheta(F(\overline{x}))=\partial\vartheta(F(\overline{x}))$,   from \eqref{Rsdiff-sderiv} we have $d\vartheta(F(\overline{x}))(dF(\overline{x})(w))\ge\langle\overline{\xi},dF(\overline{x})(w)\rangle$. While from $w\in\mathcal{C}_{f}(\overline{x},\overline{v})$ and Lemma \ref{ccone-ffun} (i), $   d\vartheta(F(\overline{x}))(dF(\overline{x})(w))=\langle\overline{v},w\rangle\le\langle\overline{\xi},dF(\overline{x})(w)\rangle$. These two inequalities imply that $dF(\overline{x})(w)\in\mathcal{C}_{\vartheta}(F(\overline{x}),\overline{\xi})$. 
 Invoking Proposition \ref{ssderive-PTD} (ii) with $\psi=\vartheta,y=F(\overline{x})$ leads to 
 \begin{align}\label{Lbound-scadineq}
  d^2\!f(\overline{x}|\overline{v})(w)
  &\ge d^2\vartheta(F(\overline{x})\,|\,\overline{\xi})(dF(\overline{x})(w))
 		+d^2(\overline{\xi}F)(\overline{x})(w)\nonumber\\
  & = d^2 \vartheta(F(\overline{x}))(dF(\overline{x})(w))
 		+d^2(\overline{\xi}F)(\overline{x})(w).
 \end{align}
 In addition, from inequality \eqref{Uboundineq} and Proposition \ref{psderive-PTD} (iv) with $\psi=\vartheta,y=F(\overline{x})$, 
 \begin{align}\label{Ubound-scadineq}
  &\!d^2\!f(\overline{x}|\overline{v})(w)
  \le\inf_{z\in\mathbb{R}^n}\big\{d^2\vartheta(F(\overline{x}))(dF(\overline{x})(w)|F''(\overline{x};w,z))-\langle\overline{v},z\rangle\} \nonumber\\
  &\quad=\! d^2\vartheta(F(\overline{x}))(dF(\overline{x})(w))
 	\!+\!\inf_{z\in\mathbb{R}^n}\!\big\{d(d\vartheta(F(\overline{x}))(dF(\overline{x})(w)))(F''(\overline{x};w,z))\!-\!\langle\overline{v},z\rangle\big\}.
 \end{align}
 Combining the above two inequalities with \eqref{suff-condition} and Proposition \ref{prop-fpsubderive} (i) yields that
 \begin{align*}
  d^2\!f(\overline{x}|\overline{v})(w)
  &= d^2 \vartheta(F(\overline{x}))(dF(\overline{x})(w))
 		+d^2(\overline{\xi}F)(\overline{x})(w)\\
  &=\inf_{z\in\mathbb{R}^n}\big\{d^2\vartheta(F(\overline{x}))(dF(\overline{x})(w)|F''(\overline{x};w,z))-\langle\overline{v},z\rangle\}\\
  &=\min_{z\in\mathbb{X}}\big\{d^2\!f(\overline{x})(w|z)-\langle\overline{v},z\rangle\},
  \end{align*}
 which implies that $f$ is parabolically regular at $\overline{x}$ for $\overline{v}$ by \cite[Proposition 13.64]{RW98}. When $\partial\vartheta(F(\overline{x}))\!=\!\widehat{\partial}\vartheta(F(\overline{x}))$ is replaced by the regularity of $\vartheta$ at $F(\overline{x})$, from inequality \eqref{Uboundineq} and Proposition \ref{psderive-PTD} (iv) with $\psi=\vartheta,y=F(\overline{x})$, equation \eqref{Ubound-scadineq} becomes the following one
  \begin{align*}\label{Ubound-scadineq}
  &d^2\!f(\overline{x}|\overline{v})(w)
  \le\inf_{z\in\mathbb{R}^n}\big\{d^2\vartheta(F(\overline{x}))(dF(\overline{x})(w)|F''(\overline{x};w,z))-\langle\overline{v},z\rangle\} \nonumber\\
  &=\!d^2\vartheta(F(\overline{x}))(dF(\overline{x})(w))
 	\!+\!\inf_{z\in\mathbb{R}^n}\!\Big\{\sup_{u\in\mathcal{A}_{\vartheta}(F(\overline{x}),dF(\overline{x})(w))}\!\langle u,F''(\overline{x};w,z)\rangle\!-\!\langle\overline{v},z\rangle\Big\}.
 \end{align*}
 Together with \eqref{Lbound-scadineq}, \eqref{suff-condition2} and Proposition \ref{prop-fpsubderive} (i), we obtain the conclusion. 
 \end{proof}

 By the proof of Theorem \ref{PropSuff}, $\overline{\xi}\in\mathop{\arg\max}_{\xi\in\Lambda_{\overline{x},\overline{v}}}d^2(\xi F)(\overline{x})(w)+d^2 \vartheta(F(\overline{x}))(dF(\overline{x})(w))$.
 The upper and lower estimates in \eqref{Uboundineq} and \eqref{Lboundineq} play a key role in the proof of Theorem \ref{PropSuff}, which extend the conclusion of \cite[Proposition 5.1]{MohS20-SIOPT} to the composition \eqref{ffun} involving a PWTD outer function and a parabolically semidifferentiable inner mapping. We notice that a general lower estimate was recently obtained in \cite[Theorem 4.1]{Benko22Arxiv} for the composition of the form \eqref{ffun} with an lsc outer function and a continuous $F$ that is calm at $\overline{x}$ in the direction of interest. The lower estimate in \eqref{Lboundineq} is more precise because the mapping $F$ is assumed to be parabolically semidifferentiable on $\mathcal{O}$ by Assumption \ref{ass0}, which is necessarily semidifferentiable by Definition 2.8 and then is stronger than the calmness of $F$ at $\overline{x}$ in a direction $w$.
  
 In the rest of this section, we confirm that condition \eqref{suff-condition2} holds for the following several classes of composite functions $f$, and then achieve its parabolic regularity and proper twice epi-differentiability by Theorems \ref{PropSuff} and \ref{theorem-tepi}, respectively: 
 \begin{itemize}
 \item[(I)] $F(x)\!:=(\|x_{J_1}\|_q,\ldots,\|x_{J_m}\|_q)^{\top}\,(q>1)$ for $x\in\mathbb{R}^n$ with a partition $\{J_1,\ldots,J_m\}$ of $[n]$, and $\vartheta(z)\!=\!\sum_{i=1}^m\rho_{\lambda}(z_i)\ (\lambda>0)$ for $z\in\mathbb{R}^m$ with  $\rho_{\lambda}\!:\mathbb{R}\to \mathbb{R}$ satisfying 
  \begin{itemize}
  \item[(C.1)] $\rho_{\lambda}$ is a PWTD function with $\rho_{\lambda}(0)=0$;
 		
  \item[(C.2)] $\rho_{\lambda}$ is differentiable at all $t\ne 0$ and  $\partial\rho_{\lambda}(0)=[-\lambda,\lambda]$; 
 		
  \item[(C.3)] $\rho_{\lambda}$ is regular and strictly continuous.
  \end{itemize}  
 \item[(II)] $F(x)\!=\|x_{2}\|_q-x_{1}\,(q>1)$ for $x=(x_1,x_2)\in\mathbb{R}\times\mathbb{R}^{n-1}$ and $\vartheta(t)=\delta_{\mathbb{R}_{-}}(t)$ for $t\in\mathbb{R}$. Now $f=\delta_{K}$ where  $K\!:=\{(x_1,x_2)\in\mathbb{R}\times\mathbb{R}^{n-1}\,|\,\|x_2\|_q\le x_{1}\}$ is known as the  $q$-order cone (see \cite{Xue00,Andersen02}). When $q=2$, $K$ is the popular second-order cone.

 \item[(III)] $F$ is twice differentiable on the open set $\mathcal{O}$ and $\vartheta$ satisfies Assumption \ref{ass0} (ii); 
  	
 \item[(IV)] $F(x)\!=\lambda_{\rm max}(x)$ for $x\in\mathbb{S}^n$ and  $\vartheta(t)=\delta_{\mathbb{R}_{-}}(t)$ for $t\in\mathbb{R}$, where $\mathbb{S}^n$ is the space of all $n\times n$ real symmetric matrices, endowed with the trace inner product $\langle\cdot,\cdot\rangle$, i.e. $\langle x,y\rangle={\rm tr}(x^{\top}y)$ for $x,y\in\mathbb{S}^n$, and its induced Frobenius norm $\|\cdot\|_F$, and $\lambda_{\rm max}(x)$ denotes the maximum eigenvalue of $x\in\mathbb{S}^n$. 	  
 \end{itemize}  
 The parabolic regularity and proper twice epi-differentiability of the composite functions of type III-IV are known, and we include them for illustrating the use of condition \eqref{suff-condition2}. 
 \subsection{Composite functions of type I}\label{sec4.1}

 This class of composite functions frequently appears in group sparsity optimization. Two popular examples for $\rho_{\lambda}$ are the following SCAD and MCP functions \cite{Fan01,Zhang10}:
 \begin{align*}
  \rho_{\lambda}(t)&:=\left\{\begin{array}{cl}
  \lambda|t| & {\rm if\;} |t|\le\lambda,\\
  \frac{-t^2+2a\lambda|t|-\lambda^2}{2(a-1)}& {\rm if\;} \lambda<|t|\le a\lambda,\\
  \frac{(a+1)}{2}\lambda^2 & {\rm if\;} |t|> a\lambda
  \end{array}\right.\ {\rm with}\ a>2;\\
 \rho_{\lambda}(t)&:=\lambda\, {\rm sign}(t)\!\int_{0}^{|t|}\Big(1-\frac{\omega}{\lambda b}\Big)_{+}d\omega\quad{\rm with}\ \ b>0.\nonumber
 \end{align*} 
 Let $F_i(z):=\|z\|_q$ for each $i\in[m]$ with $z\in\mathbb{R}^{J_i}$. Write ${\rm gs}(x)\!:=\{i\in[m]\ |\ x_{J_i}\ne 0\}$ and $\overline{\rm gs}(x)\!:=[m]\backslash{\rm gs}(x)$ for $x\in\mathbb{R}^n$. Fix any $x\in\mathbb{R}^n$ and $i\!\in\!{\rm gs}(x)$. The function $F_i$ is twice continuously differentiable at $x_{J_i}$ with  
 $\nabla F_i(x_{J_i})={\rm sign}(x_{J_i})\circ|x_{J_i}|^{q-1}\|x_{J_i}\|_q^{1-q}$,
 where $\circ$ is the Hadamard product operator. The mapping $F$ is Lipschitz continuous and directionally differentiable, and $dF(x)(w)\!=\!(dF_1(x_{J_1})(w_{J_1}),\ldots,dF_m(x_{J_m})(w_{J_m}))^{\top}$ for any $x,w\!\in\!\mathbb{R}^n$  with
 \begin{equation}\label{dF-equa}
  dF_i(x_{J_i})(w_{J_i})=\left\{\begin{array}{cl}
 		\langle\nabla\!F_i(x_{J_i}),w_{J_i}\rangle&{\rm if}\ i\in{\rm gs}(x),\\
 		\|w_{J_i}\|_q &{\rm otherwise} 	
 	\end{array}\right.\ \ {\rm for}\ i\in[m],
 \end{equation}
 and moreover, for any $\xi\in\mathbb{R}^m$, $d^2(\xi F)(x)(w)=\sum_{i=1}^md^2(\xi_iF_i)(x_{J_i})(w_{J_i})$ with
 \begin{equation}\label{d2F-equa0}
  d^2(\xi_iF_i)(x_{J_i})(w_{J_i})
  =\left\{\begin{array}{cl}
   \xi_i\langle w_{J_i},\nabla^2\!F_i(x_{J_i})w_{J_i}\rangle &{\rm if}\ i\in{\rm gs}(x),\\
   0 &{\rm otherwise}
  \end{array}\right.\ \ {\rm for}\ i\in[m].
 \end{equation}
 It is not difficult to check  the mapping $F$ is parabolically semidifferentiable, and at any $x,w\!\in\!\mathbb{R}^n$, 	$F''(x;w,z)\!=\!\big(F_1''(x_{J_1};w_{J_1},z_{J_1}),\ldots,F_m''(x_{J_m};w_{J_m},z_{J_m})\big)^{\top}$ for $z\in\mathbb{R}^n$ with 
 \begin{equation}\label{d2F-equa1}
  F_i''(x_{J_i};w_{J_i},z_{J_i})
   =\left\{\begin{array}{cl}
   \|z_{J_i}\|_q  & {\rm if \;} i\in\overline{\rm gs}(x)\cap\overline{\rm gs}(w),\\
 		\langle \nabla\!F_i(w_{J_i}),z_{J_i}\rangle & {\rm if \;} i\in\overline{\rm gs}(x)\cap{\rm gs}(w),\\
 		\!\langle w_{J_i},\nabla^2\!F_i(x_{J_i})w_{J_i}\rangle\!+\!\langle\nabla\!F_i(x_{J_i}),z_{J_i}\rangle & {\rm if \;} i\in{\rm gs}(x).
   \end{array}\right.
 \end{equation}
  By \cite[Theorem 8.30]{RW98}, conditions (C.2)-(C.3) imply that $d\rho_{\lambda}(0)(\omega)\!=\!\lambda|\omega|$ for any $\omega\!\in\!\mathbb{R}$. Then, from conditions (C.1)-(C.3), it is easy to obtain the following properties of $\vartheta$. 
 \begin{lemma}\label{scad-lemma}
  Let $\vartheta$ be given as in type I. Fix any $y$ with ${\rm supp}(y):=\{i\in [m]\,|\, y_i\neq 0\}$. 
  \begin{itemize}
  \item [(i)] For any $w\in\mathbb{R}^m$, $d\vartheta(y)(w)=\sum_{i\in{\rm supp}(y)}\rho_{\lambda}'(y_i)w_i+\lambda\sum_{i\notin{\rm supp}(y)}|w_i|$.
 		
  \item [(ii)] $\vartheta$ is regular at $y$ with $\emptyset\ne\widehat{\partial}\vartheta(y)=\partial \vartheta(y)
 		=\partial\rho_{\lambda}(y_1)\times\cdots\times\partial\rho_{\lambda}(y_m)$ where 
 		\[
 		\partial\rho_{\lambda}(y_i)
 		=\left\{\begin{array}{cl}
 			[-\lambda,\lambda]&\ {\rm if}\ i\notin{\rm supp}(y),\\
 			\{\rho_{\lambda}'(y_i)\} &\ {\rm if\;}\ i\in{\rm supp}(y)
 		\end{array}\right.\ \ {\rm for}\ i\in[m].
 		\]     
 \end{itemize}
 \end{lemma}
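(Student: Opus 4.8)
The plan is to reduce everything to the scalar function $\rho_{\lambda}$ by exploiting that $\vartheta(z)=\sum_{i=1}^{m}\rho_{\lambda}(z_i)$ is \emph{separable}, i.e.\ it is the sum of the functions $z\mapsto\rho_{\lambda}(z_i)$ acting on the pairwise disjoint single coordinates $z_1,\dots,z_m$. Since each $\rho_{\lambda}\colon\mathbb{R}\to\mathbb{R}$ is real-valued and strictly continuous by (C.1) and (C.3), hence proper lsc, I would invoke \cite[Proposition 10.5]{RW98} to obtain simultaneously that $\vartheta$ is regular at $y$ if and only if $\rho_{\lambda}$ is regular at every $y_i$, and that in this case $\widehat{\partial}\vartheta(y)=\widehat{\partial}\rho_{\lambda}(y_1)\times\cdots\times\widehat{\partial}\rho_{\lambda}(y_m)$ and $\partial\vartheta(y)=\partial\rho_{\lambda}(y_1)\times\cdots\times\partial\rho_{\lambda}(y_m)$. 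Condition (C.3) guarantees that $\rho_{\lambda}$ is regular at each $y_i$, so $\vartheta$ is regular at $y$ and $\widehat{\partial}\vartheta(y)=\partial\vartheta(y)=\prod_{i=1}^{m}\partial\rho_{\lambda}(y_i)$.

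Next I would evaluate the one-dimensional subdifferential $\partial\rho_{\lambda}(y_i)$. If $i\notin{\rm supp}(y)$, then $y_i=0$ and (C.2) gives directly $\partial\rho_{\lambda}(0)=[-\lambda,\lambda]$. If $i\in{\rm supp}(y)$, then $y_i\ne 0$ and (C.2) makes $\rho_{\lambda}$ differentiable at $y_i$; combined with the regularity of $\rho_{\lambda}$ at $y_i$ this forces $\partial\rho_{\lambda}(y_i)=\widehat{\partial}\rho_{\lambda}(y_i)=\{\rho_{\lambda}'(y_i)\}$, since the regular subdifferential of a function at a point of differentiability is the gradient singleton. (Alternatively, the PWTD structure implies $\rho_{\lambda}$ is in fact $C^{1}$ around any $y_i\ne 0$, which yields the same.) As every factor $\partial\rho_{\lambda}(y_i)$ is nonempty, so is the product; this establishes assertion (ii).

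For assertion (i) I would then use that $\vartheta$ is strictly continuous, being a finite sum of strictly continuous functions of single coordinates, so that $d\vartheta(y)$ is finite-valued, in particular proper. Applying \cite[Theorem 8.30]{RW98} to the regular function $\vartheta$ gives $d\vartheta(y)(w)=\sup_{v\in\partial\vartheta(y)}\langle v,w\rangle$, and substituting the product form of $\partial\vartheta(y)$ from (ii) the supremum splits over coordinates:
\[
d\vartheta(y)(w)=\sum_{i=1}^{m}\ \sup_{v_i\in\partial\rho_{\lambda}(y_i)}v_iw_i
=\sum_{i\in{\rm supp}(y)}\rho_{\lambda}'(y_i)w_i+\lambda\!\sum_{i\notin{\rm supp}(y)}|w_i|,
\]
where I used $\sup_{v_i\in[-\lambda,\lambda]}v_iw_i=\lambda|w_i|$. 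This is exactly the formula in (i), and it is consistent with the identity $d\rho_{\lambda}(0)(\omega)=\lambda|\omega|$ recorded just before the statement. As an alternative, since $\vartheta$ is itself a PWTD function with full domain $\mathbb{R}^{m}$, one can derive (i) directly from the separability of the difference quotients $\Delta_{\tau}\vartheta(y)(w')=\sum_{i=1}^{m}\Delta_{\tau}\rho_{\lambda}(y_i)(w_i')$ together with the semidifferentiability of each $\rho_{\lambda}$ (which follows from (C.1), (C.3) and Lemma \ref{sderive-PTD}), but the route through \cite[Theorem 8.30]{RW98} is shorter once (ii) is in hand.

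This statement is not hard; the only points requiring a little care are: (a) that \cite[Proposition 10.5]{RW98} applies precisely because the summands act on disjoint blocks of coordinates, so genuine separability is in force; (b) that for $y_i\ne 0$ one cannot conclude $\partial\rho_{\lambda}(y_i)=\{\rho_{\lambda}'(y_i)\}$ from mere pointwise differentiability of a locally Lipschitz function---one genuinely needs the regularity hypothesis (C.3) (or, equivalently here, the $C^{1}$ property coming from the PWTD structure); and (c) verifying properness of $d\vartheta(y)$ so that \cite[Theorem 8.30]{RW98} is applicable.
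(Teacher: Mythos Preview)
Your proposal is correct and is essentially a fleshed-out version of what the paper itself does: the paper gives no formal proof of this lemma, merely recording right before its statement that \cite[Theorem 8.30]{RW98} combined with (C.2)--(C.3) yields $d\rho_{\lambda}(0)(\omega)=\lambda|\omega|$ and then declaring the rest ``easy to obtain'' from (C.1)--(C.3). Your route via separability (\cite[Proposition 10.5]{RW98}) for the product subdifferential and regularity, followed by \cite[Theorem 8.30]{RW98} for the subderivative formula, is precisely the intended argument and the care you take in points (a)--(c) is appropriate.
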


 By leveraging Lemma \ref{scad-lemma} (ii), we can characterize the subdifferential of $f$ as follows.
 \begin{lemma}\label{gsubdiff}
 Consider any $\overline{x}\in\mathbb{R}^n$ and any $v\in \partial\!f(\overline{x})$. Then, for each $i\in{\rm gs}(\overline{x})$, $v_i=\rho_{\lambda}'(\|\overline{x}_{J_i}\|_q)\nabla\!F_i(\overline{x}_{J_i})$; and for each $i\in\overline{{\rm gs}}(\overline{x})$, there exist $\eta_i\!\in\![-\lambda,\lambda]$ and $\zeta_i\in\mathbb{R}^{|J_i|}$ such that $v_i=\eta_i\zeta_i$ where, with $p:=\frac{q}{q-1}$, $\|\zeta_i\|_p\le 1$ if $\eta_i\ge 0$, and $\|\zeta_i\|_p=1$ if $\eta_i<0$.
 \end{lemma}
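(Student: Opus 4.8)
The plan is to exploit the separable structure of $f$ so as to reduce the computation of $\partial\!f(\overline{x})$ to a single block, and then to carry out two case analyses according to whether $\overline{x}_{J_i}$ vanishes. Since $\{J_1,\dots,J_m\}$ partitions $[n]$, we may write $f(x)=\sum_{i=1}^m g_i(x_{J_i})$ with $g_i(z):=\rho_{\lambda}(\|z\|_q)=\rho_{\lambda}(F_i(z))$ for $z\in\mathbb{R}^{|J_i|}$, where each $g_i$ is finite-valued and continuous by conditions (C.1) and (C.3), hence proper and lsc. Applying the subgradient formula for separable sums \cite[Proposition 10.5]{RW98} gives $\partial\!f(\overline{x})=\partial g_1(\overline{x}_{J_1})\times\cdots\times\partial g_m(\overline{x}_{J_m})$, so that $v\in\partial\!f(\overline{x})$ if and only if $v_i\in\partial g_i(\overline{x}_{J_i})$ for every $i\in[m]$, and it remains only to describe $\partial g_i(\overline{x}_{J_i})$.

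For $i\in{\rm gs}(\overline{x})$ we have $\|\overline{x}_{J_i}\|_q>0$. Conditions (C.1)--(C.2) force $\rho_{\lambda}$ to be continuously differentiable on $\mathbb{R}\setminus\{0\}$ (at any $t_0\neq 0$ only finitely many twice differentiable pieces meet, and the differentiability from (C.2) makes their one-sided derivatives agree there), and $F_i$ is twice continuously differentiable at $\overline{x}_{J_i}$; hence $g_i=\rho_{\lambda}\circ F_i$ is continuously differentiable in a neighborhood of $\overline{x}_{J_i}$. Consequently $\partial g_i(\overline{x}_{J_i})=\{\nabla g_i(\overline{x}_{J_i})\}$, and the classical chain rule delivers $\nabla g_i(\overline{x}_{J_i})=\rho_{\lambda}'(\|\overline{x}_{J_i}\|_q)\nabla\!F_i(\overline{x}_{J_i})$, which is the asserted formula for $v_i$.

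The heart of the proof is the case $i\in\overline{\rm gs}(\overline{x})$, i.e. $\overline{x}_{J_i}=0$, where $g_i$ is nonsmooth and $\partial g_i(0)$ must be computed from the definitions. First I would show $dg_i(0)(w)=\lambda\|w\|_q$ for all $w$: this follows from Lemma~\ref{fsubderive-lemma} applied with $\rho_{\lambda}$ and $F_i$ in place of $\vartheta$ and $F$ (the MSQC being trivial since ${\rm dom}\,\rho_{\lambda}=\mathbb{R}$), using $dF_i(0)(w)=\|w\|_q$ from \eqref{dF-equa} and $d\rho_{\lambda}(0)(\omega)=\lambda|\omega|$ recorded before Lemma~\ref{scad-lemma}. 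Then the equivalence \eqref{Rsdiff-sderiv} and the duality between $\|\cdot\|_q$ and $\|\cdot\|_p$ with $p=q/(q-1)$ give $\widehat{\partial}g_i(0)=\{v\,|\,\langle v,w\rangle\le\lambda\|w\|_q\ \forall w\}=\{v\,|\,\|v\|_p\le\lambda\}$. For the limiting subdifferential, this ball is contained in $\partial g_i(0)$, and conversely any $v\in\partial g_i(0)$ is a limit of regular subgradients $v^k\in\widehat{\partial}g_i(z^k)$ with $z^k\to 0$ (convergence of function values being automatic by continuity); for $z^k\neq 0$ such a subgradient equals $\rho_{\lambda}'(\|z^k\|_q)\nabla\!F_i(z^k)$, where a direct computation gives $\|\nabla\!F_i(z^k)\|_p=1$ (using $(q-1)p=q$), while $\rho_{\lambda}'(\|z^k\|_q)\to\lambda$ since $\|z^k\|_q\downarrow 0$; hence $\|v\|_p\le\lambda$, so $\partial g_i(0)=\{v\,|\,\|v\|_p\le\lambda\}$. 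Finally I would repackage this ball in the stated form: a nonzero $v$ with $\|v\|_p\le\lambda$ equals $\eta_i\zeta_i$ with $\eta_i:=\|v\|_p\in(0,\lambda]$ and $\zeta_i:=v/\|v\|_p$ satisfying $\|\zeta_i\|_p=1\le 1$, the case $v=0$ being handled with $\eta_i=0$; conversely every $\eta_i\zeta_i$ obeying the two conditions has $\|\eta_i\zeta_i\|_p\le|\eta_i|\le\lambda$.

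The main obstacle is the zero-block case, and within it the limit passage: one must argue carefully, from the PWTD structure together with $d\rho_{\lambda}(0)(1)=\lambda$, that $\rho_{\lambda}$ agrees near $0^+$ with a $C^1$ function vanishing at $0$ whose derivative at $0$ equals $\lambda$, so that $\rho_{\lambda}'(t)\to\lambda$ (and not $-\lambda$) as $t\downarrow 0$. This is precisely what forces the limiting subgradients inherited from nearby smooth points to have $\ell_p$-norm exactly $\lambda$, and it is where the sign-dependent restriction ``$\|\zeta_i\|_p=1$ if $\eta_i<0$'' in the statement originates.
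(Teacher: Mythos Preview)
Your proof is correct but follows a different route from the paper. The paper invokes the nonsmooth chain rule \cite[Theorem 10.49 \& Proposition 9.24(b)]{RW98} to produce a single multiplier $\eta\in\partial\vartheta(F(\overline{x}))$ with $v_i\in\partial(\eta_iF_i)(\overline{x}_{J_i})$ for every $i$, and then, for $i\in\overline{\rm gs}(\overline{x})$, computes $\partial(\eta_iF_i)(0)$ according to the sign of $\eta_i$: for $\eta_i\ge0$ one gets $\eta_i$ times the closed $\ell_p$-unit ball, while for $\eta_i<0$ one gets $\eta_i$ times the $\ell_p$-unit \emph{sphere} (via \cite[Corollary 9.21]{RW98} applied to the concave function $-|\eta_i|\,\|\cdot\|_q$). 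That is the true origin of the sign-dependent clause ``$\|\zeta_i\|_p=1$ if $\eta_i<0$'' in the statement; it is not, as your last paragraph suggests, driven by the limit $\rho_\lambda'(t)\to\lambda$ as $t\downarrow 0$.

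Your argument instead exploits separability first, computes the block subdifferential $\partial g_i(0)=\{v:\|v\|_p\le\lambda\}$ from scratch, and then packages any element of this ball as $\eta_i\zeta_i$ with $\eta_i:=\|v\|_p\ge0$. In your decomposition the case $\eta_i<0$ simply never arises, which is perfectly adequate for the existence claim in the lemma. The trade-off is that your $\eta_i$ are chosen blockwise rather than coming from a single $\eta\in\partial\vartheta(F(\overline{x}))$; the paper's choice aligns more directly with the multiplier set $\Lambda_{\overline{x},\overline{v}}$ used in Theorem~\ref{PropSuff}, but your decomposition works equally well when the lemma is applied in the proof of Proposition~\ref{scad-prop}.
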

 \begin{proof}
  By the strict continuity of $\vartheta$ and $F$ and \cite[Theorem 10.49 \& Proposition 9.24 (b)]{RW98}, there exists $\eta\!\in\! \partial \vartheta(F(\overline{x}))$ such that $v_i\!\in\!\partial(\eta_iF_i)(\overline{x}_{J_i})$ for each $i\!\in\! [m]$. By Lemma \ref{scad-lemma} (ii), for each $i\!\in\!{\rm gs}(\overline{x})$, $\eta_i\!=\!\rho_{\lambda}'(\|\overline{x}_{J_i}\|_q)$, so $v_i\!=\!\rho_{\lambda}'(\|\overline{x}_{J_i}\|_q)\nabla\!F_i(\overline{x}_{J_i})$; and for each $i\in\overline{{\rm gs}}(\overline{x})$, $\eta_i\!\in\![-\lambda,\lambda]$. Fix any $i\!\in\!\overline{{\rm gs}}(\overline{x})$. When $\eta_i<0$, by \cite[Corollary 9.21]{RW98}, an elementary calculation yields that $\partial(\eta_iF_i)(\overline{x}_{J_i})\!=\!\partial(-|\eta_i|\|\cdot\|_q)(\overline{x}_{J_i})\!=\!\eta_i\{d\in\mathbb{R}^{|J_i|}\,|\,\|d\|_p\!=\!1\}$; and when $\eta_i\!\ge\! 0$, we have $\partial(\eta_iF_i)(\overline{x}_{J_i})\!=\!\eta_i\partial F_i(\overline{x}_{J_i})=\eta_i\{d\in\mathbb{R}^{|J_i|}\,|\,\|d\|_p\le 1\}$.
  \end{proof}

  Now we are ready to prove that condition \eqref{suff-condition2} holds for this class of functions $f$.
 \begin{proposition}\label{scad-prop}
  Fix any $\overline{x}\in \mathbb{R}^n$ and $\overline{v}\in\partial\!f(\overline{x})$. Then, condition \eqref{suff-condition2} holds, and consequently, $f$ is parabolically regular and properly twice epi-differentiable at $\overline{x}$ for $\overline{v}$.
 \end{proposition}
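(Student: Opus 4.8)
The plan is to verify condition \eqref{suff-condition2} of Theorem \ref{PropSuff} directly and then invoke Theorems \ref{PropSuff} and \ref{theorem-tepi}; the whole argument rides on the separability of $\vartheta(z)=\sum_{i=1}^m\rho_\lambda(z_i)$ and $F(x)=(\|x_{J_1}\|_q,\dots,\|x_{J_m}\|_q)^{\top}$, which lets every object in \eqref{suff-condition2} decouple over the blocks $i\in[m]$. First I would record the cheap facts: since each $\rho_\lambda$ is real-valued, ${\rm dom}\,\vartheta={\rm dom}\,f=\mathbb{R}^n$, so the MSQC for $F(x)\in{\rm dom}\,\vartheta$ at $\overline{x}$ holds trivially; by Lemma \ref{scad-lemma}(ii), $\vartheta$ is regular at $F(\overline{x})$ with $\widehat{\partial}\vartheta(F(\overline{x}))=\partial\vartheta(F(\overline{x}))$; and near $\overline{x}$ the function $f$ splits as a part strictly differentiable at $\overline{x}$ (on the coordinates $i\in{\rm gs}(\overline{x})$, where $\rho_\lambda$ is differentiable and $F_i$ is $C^2$) plus the convex maps $x_{J_i}\mapsto\rho_\lambda(\|x_{J_i}\|_q)$ for $i\in\overline{\rm gs}(\overline{x})$ (for $\|x_{J_i}\|_q$ small this equals $\lambda\|x_{J_i}\|_q$ up to a term strictly differentiable at $0$), whence $f$ is regular at $\overline{x}$ and $\overline{v}\in\widehat{\partial}\!f(\overline{x})$.

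Next I would exhibit the multiplier: put $\overline{\xi}_i:=\rho_\lambda'(\|\overline{x}_{J_i}\|_q)$ for $i\in{\rm gs}(\overline{x})$ and $\overline{\xi}_i:=\|\overline{v}_i\|_p$ (with $p=\frac{q}{q-1}$) for $i\in\overline{\rm gs}(\overline{x})$. By Lemmas \ref{scad-lemma}(ii) and \ref{gsubdiff} (the latter giving $\|\overline{v}_i\|_p\le\lambda$), $\overline{\xi}\in\partial\vartheta(F(\overline{x}))$, and formula \eqref{dF-equa} for $dF(\overline{x})$ together with H\"older's inequality gives $\langle\overline{\xi},dF(\overline{x})(w')\rangle\ge\langle\overline{v},w'\rangle$ for all $w'$: the ${\rm gs}(\overline{x})$-blocks match exactly by Lemma \ref{gsubdiff}, while each $\overline{\rm gs}(\overline{x})$-block contributes $\|\overline{v}_i\|_p\|w'_{J_i}\|_q\ge\langle\overline{v}_i,w'_{J_i}\rangle$. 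Thus $\overline{\xi}\in\Lambda_{\overline{x},\overline{v}}$, and by \eqref{d2F-equa0} its associated quadratic is $d^2(\overline{\xi}F)(\overline{x})(w)=\sum_{i\in{\rm gs}(\overline{x})}\rho_\lambda'(\|\overline{x}_{J_i}\|_q)\langle w_{J_i},\nabla^2\!F_i(\overline{x}_{J_i})w_{J_i}\rangle$.

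The core of the proof is the block-wise evaluation of the left-hand infimum of \eqref{suff-condition2} for a fixed $w\in\mathcal{C}_{\!f}(\overline{x},\overline{v})$. Using Lemma \ref{scad-lemma}(ii), the set $\mathcal{A}_{\vartheta}(F(\overline{x}),dF(\overline{x})(w))$ is the product over $i\in[m]$ of $\{\rho_\lambda'(\|\overline{x}_{J_i}\|_q)\}$ (for $i\in{\rm gs}(\overline{x})$), $\{\lambda\}$ (for $i\in\overline{\rm gs}(\overline{x})$ with $w_{J_i}\ne0$, since then $dF_i(\overline{x}_{J_i})(w_{J_i})=\|w_{J_i}\|_q>0$), and $[-\lambda,\lambda]$ (for $i\in\overline{\rm gs}(\overline{x})$ with $w_{J_i}=0$); combined with the block structure of $F''(\overline{x};w,z)$ in \eqref{d2F-equa1}, this splits that infimum into a sum of one-block infima over $z_{J_i}$. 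For $i\in{\rm gs}(\overline{x})$ the $z_{J_i}$-linear terms cancel and the block value is $\rho_\lambda'(\|\overline{x}_{J_i}\|_q)\langle w_{J_i},\nabla^2\!F_i(\overline{x}_{J_i})w_{J_i}\rangle$; for $i\in\overline{\rm gs}(\overline{x})$ with $w_{J_i}=0$ it is $\inf_{z_{J_i}}\{\lambda\|z_{J_i}\|_q-\langle\overline{v}_i,z_{J_i}\rangle\}=0$ (as $\|\overline{v}_i\|_p\le\lambda$); and for $i\in\overline{\rm gs}(\overline{x})$ with $w_{J_i}\ne0$ it is $\inf_{z_{J_i}}\langle\lambda\nabla\!F_i(w_{J_i})-\overline{v}_i,z_{J_i}\rangle$, which is $0$ if $\overline{v}_i=\lambda\nabla\!F_i(w_{J_i})$ and $-\infty$ otherwise. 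To exclude the $-\infty$ alternative I would use that $w\in\mathcal{C}_{\!f}(\overline{x},\overline{v})$, Lemma \ref{ccone-ffun}(i), and the cancellation of the ${\rm gs}(\overline{x})$-blocks force $\langle\overline{v}_i,w_{J_i}\rangle=\lambda\|w_{J_i}\|_q$, hence $\|\overline{v}_i\|_p=\lambda$, so that by the strict convexity of the $\ell_q$-norm for $q\in(1,\infty)$ the resulting H\"older equality pins $\overline{v}_i/\lambda$ down as the unique unit-$p$-norm dual vector of $w_{J_i}$, namely $\nabla\!F_i(w_{J_i})$. Summing the blocks gives exactly $d^2(\overline{\xi}F)(\overline{x})(w)$, i.e.\ \eqref{suff-condition2}; Theorem \ref{PropSuff} then yields parabolic regularity of $f$ at $\overline{x}$ for $\overline{v}$, and, with $\overline{v}\in\widehat{\partial}\!f(\overline{x})$ already in hand, Theorem \ref{theorem-tepi} yields proper twice epi-differentiability. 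I expect the main obstacle to be precisely this last block ($i\in\overline{\rm gs}(\overline{x})$, $w_{J_i}\ne0$): ruling out the value $-\infty$ needs the exact interplay of criticality, the bound $\|\overline{v}_i\|_p\le\lambda$ from Lemma \ref{gsubdiff}, and uniqueness in H\"older's inequality for $q\in(1,\infty)$; establishing the regularity of $f$ at $\overline{x}$ is a smaller side issue.
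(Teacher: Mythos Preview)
Your proposal is correct and follows essentially the same block-wise strategy as the paper: verify the MSQC trivially, pick a multiplier $\overline{\xi}\in\Lambda_{\overline{x},\overline{v}}$, decouple the infimum in \eqref{suff-condition2} over $i\in[m]$ using the product structure of $\mathcal{A}_{\vartheta}$ and the block form of $F''$, and handle the delicate case $i\in\overline{\rm gs}(\overline{x})\cap{\rm gs}(w)$ via the H\"older equality forced by criticality. Two cosmetic differences are worth noting: the paper takes $\overline{\xi}_i=\lambda$ on $\overline{\rm gs}(\overline{x})$ (versus your $\overline{\xi}_i=\|\overline{v}_i\|_p$), which is immaterial since those blocks contribute $0$ to $d^2(\overline{\xi}F)(\overline{x})(w)$ by \eqref{d2F-equa0}; and for $\overline{v}\in\widehat{\partial}f(\overline{x})$ the paper asserts weak convexity of $z\mapsto\rho_\lambda(\|z\|_q)$, while your local decomposition into a strictly differentiable part plus $\lambda\|\cdot\|_q$ (using that $\rho_\lambda$ is PWTD with $d\rho_\lambda(0)(\omega)=\lambda|\omega|$) is a cleaner route that relies only on (C.1)--(C.3).
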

 \begin{proof}
  As ${\rm dom}\,\vartheta=\mathbb{R}^m$, the MSQC holds for system $F(x)\in{\rm dom}\,\vartheta$ at $\overline{x}$. Choose $\overline{\xi}\in\mathbb{R}^m$ with $\overline{\xi}_i=\rho_{\lambda}'(\|\overline{x}_{J_i}\|_q)$ for $i\in{\rm gs}(\overline{x})$ and $\overline{\xi}_i=\lambda$ for $i\in\overline{{\rm gs}}(\overline{x})$. By Lemma \ref{scad-lemma} (ii), obviously, $\overline{\xi}\in\partial\vartheta(F(\overline{x}))=\widehat{\partial}\vartheta(F(\overline{x}))$. We first argue that $\overline{\xi}\in\Lambda_{\overline{x},\overline{v}}$. Indeed, from $\overline{v}\in\partial\!f(\overline{x})$ and Lemma \ref{gsubdiff},  $\overline{v}_i\!=\rho_{\lambda}'(\|\overline{x}_{J_i}\|_q)\nabla\!F_i(\overline{x}_{J_i})$ for each $i\in{\rm gs}(\overline{x})$, and for each $i\in\overline{{\rm gs}}(\overline{x})$, there exist $\overline{\eta}_i\in[-\lambda,\lambda]$ and $\overline{\zeta}_i\in\mathbb{R}^{|J_i|}$ such that $\overline{v}_i=\overline{\eta}_i\overline{\zeta}_i$, where $\|\overline{\zeta}_i\|_p\le 1$ if $\overline{\eta}_i\ge 0$; otherwise $\|\overline{\zeta}_i\|_p=1$. Consequently, it is immediate to have that
  \begin{equation}\label{xibar-equa0}
  \langle \overline{v}, w'\rangle=\!\sum_{i\in{\rm gs}(\overline{x})}\rho_{\lambda}'(\|\overline{x}_{J_i}\|_q)\langle \nabla\!F_i(\overline{x}_{J_i}),w_{J_i}'\rangle+\!\sum_{i\in\overline{{\rm gs}}(\overline{x})}\overline{\eta}_i\langle\overline{\zeta}_i,w_{J_i}'\rangle\quad\ \forall w'\in\mathbb{R}^n.
  \end{equation}
  On the other hand, by equation \eqref{dF-equa} and Lemma \ref{scad-lemma} (i), it follows that for any $w'\!\in\!\mathbb{R}^n$, 
  \[
  \!\langle\overline{\xi}, dF(\overline{x})(w')\rangle
  =\!\!\sum_{i\in{\rm gs}(\overline{x})}\rho_{\lambda}'(F_i(\overline{x}_{J_i}))\langle \nabla\!F_i(\overline{x}_{J_i}),w_{J_i}'\rangle+\lambda\!\!\sum_{i\in\overline{{\rm gs}}(\overline{x})}\!\!\|w_{J_i}'\|_q\!=d\vartheta(F(\overline{x}))(dF(\overline{x})(w')). 
  \]
 The above two equations imply that $\langle \overline{\xi},dF(\overline{x})(w')\rangle\ge\langle \overline{v}, w'\rangle$ for all $w'\in\mathbb{R}^n$. Along with $\overline{\xi}\in\partial\vartheta(F(\overline{x}))$, we obtain $\overline{\xi}\in \Lambda_{\overline{x},\overline{v}}$. Next we prove that the vector $\overline{\xi}$ is such that condition \eqref{suff-condition2} holds. 
 Fix any $w\in\mathcal{C}_{f}(\overline{x},\overline{v})$. It suffices to show that 
 \begin{equation}\label{aim-equa}
  \Gamma_0\!:=\!\inf_{z\in\mathbb{R}^n}\!\Big\{\sup_{u\in\mathcal{A}_{\vartheta}(F(\overline{x}),dF(\overline{x})(w))}\!\langle u,F''(\overline{x};w,z)\rangle-\langle\overline{v},z\rangle\Big\}
 		=d^2(\overline{\xi}F)(\overline{x})(w).
 \end{equation}
 To this end, we first claim that  $u\!\in\!\mathcal{A}_{\vartheta}(F(\overline{x}),dF(\overline{x})(w))$ if and only if for each $i\!\in\![m]$, 
 \begin{equation}\label{urelation}
  u_i\in\left\{\begin{array}{cl}
 \{\rho_{\lambda}'(F_i(\overline{x}_{J_i}))\} &{\rm if}\ i\in{\rm gs}(\overline{x}),\\
  \{\lambda\} &{\rm if}\ i\in\overline{{\rm gs}}(\overline{x})\cap{\rm gs}(w),\\
 \ [-\lambda,\lambda]&{\rm if}\ i\in\overline{{\rm gs}}(\overline{x})\cap\overline{{\rm gs}}(w).   
 \end{array}\right.
 \end{equation}
 Indeed, by Lemma \ref{scad-lemma} (i) and equation \eqref{dF-equa}, $u\!\in\!\mathcal{A}_{\vartheta}(F(\overline{x}),dF(\overline{x})(w))$ if and only if
 \begin{align}\label{mid-equa51}
 &\sum_{i\in{\rm gs}(\overline{x})}u_i\langle \nabla\!F_i(\overline{x}_{J_i}),w_{J_i}\rangle+\sum_{i\in\overline{{\rm gs}}(\overline{x})}u_i\|w_{J_i}\|_q=\langle u,dF(\overline{x})(w)\rangle=d\vartheta(F(\overline{x}))(dF(\overline{x})(w))\nonumber\\
  &\qquad=\sum_{i\in{\rm gs}(\overline{x})}\rho_{\lambda}'(\|\overline{x}_{J_i}\|_q)\langle \nabla\!F_i(\overline{x}_{J_i}),w_{J_i}\rangle+\lambda\sum_{i\in\overline{{\rm gs}}(\overline{x})}\|w_{J_i}\|_q\ \ {\rm and}\ \ u\in\partial\vartheta(F(\overline{x})).
 \end{align}
 While by invoking Lemma \ref{scad-lemma} (ii), it is not hard to see that equation \eqref{mid-equa51} if and only if
 \begin{align}\label{mid-equa51}
  u_i=\rho_{\lambda}'(F_i(\overline{x}_{J_i}))\ {\rm for}\ i\in{\rm gs}(\overline{x}),\sum_{i\in\overline{{\rm gs}}(\overline{x})}\!u_i\|w_{J_i}\|_q=\lambda\!\sum_{i\in\overline{{\rm gs}}(\overline{x})}\!\|w_{J_i}\|_q,\,u\in\partial\vartheta(F(\overline{x})).
 \end{align}
 Combining Lemma \ref{scad-lemma} (ii) with the second equality and the inclusion in \eqref{mid-equa51}, we conclude that equation \eqref{mid-equa51} holds, or equivalently $u\!\in\!\mathcal{A}_{\vartheta}(F(\overline{x}),dF(\overline{x})(w))$, if and only if \eqref{urelation} holds for each $i\!\in\![m]$. 
 From $w\in\mathcal{C}_{f}(\overline{x},\overline{v})$, Lemmas \ref{ccone-ffun} (i) and \ref{scad-lemma} (i), and \eqref{dF-equa}, 
 \begin{equation*}
  \langle\overline{v},w\rangle=\sum_{i\in{\rm gs}(\overline{x})}\rho_{\lambda}'(\|\overline{x}_{J_i}\|_q)\langle\nabla\! F_i(x_{J_i}),w_{J_i}\rangle\!+\lambda\!\sum_{i\in\overline{{\rm gs}}(\overline{x})}\|w_{J_i}\|_q,
 \end{equation*} 
 which along with the above \eqref{xibar-equa0} for $w'=w$ implies that the following equality holds 
 \[
 \sum_{i\in\overline{{\rm gs}}(\overline{x})}\lambda\|w_{J_i}\|_q		\!=\!\sum_{i\in\overline{{\rm gs}}(\overline{x})}\overline{\eta}_i\langle\overline{\zeta}_i,w_{J_i}\rangle
 \Leftrightarrow
 \sum_{i\in\overline{{\rm gs}}(\overline{x})\cap{\rm gs}(w)}\!\!\lambda\|w_{J_i}\|_q	\!=\!\!\sum_{i\in\overline{{\rm gs}}(\overline{x})\cap{\rm gs}(w)}\!\!|\overline{\eta}_i|\langle{\rm sign}(\overline{\eta}_i)\overline{\zeta}_i,w_{J_i}\rangle.
 \]
 Note that $\langle u,\xi\rangle\le\|u\|_p\|\xi\|_q$ for any $u,\xi\in\mathbb{R}^{|J_i|}$, and the equality holds if and only if $\frac{u}{\|u\|_p}={\rm sign}(\xi)\circ|\xi|^{q-1}\|\xi\|_q^{1-q}$ for any $u\ne 0$ and $\xi\ne 0$. Then, for each $i\in\overline{{\rm gs}}(\overline{x})\cap{\rm gs}(w)$,
 \begin{equation}\label{temp-eta}
  {\rm sign}(\overline{\eta}_i)\overline{\zeta}_i={\rm sign}(w_{J_i})\circ|w_{J_i}|^{q-1}\|w_{J_i}\|_q^{1-q}=\nabla\!F_i(w_{J_i})\ \ {\rm and}\ \ |\overline{\eta}_i|=\lambda.
 \end{equation}
  For every $z\in\mathbb{R}^n$, from the equalities in \eqref{temp-eta} and equation \eqref{xibar-equa0} with $w'=z$, we have
 \begin{align*}
  \langle\overline{v},z\rangle
  &=\sum_{i\in{\rm gs}(\overline{x})}\rho_{\lambda}'(\|\overline{x}_{J_i}\|_q)\langle \nabla\!F_i(\overline{x}_{J_i}),z_{J_i}\rangle\!+\!\sum_{i\in\overline{{\rm gs}}(\overline{x})}\overline{\eta}_i\langle\overline{\zeta}_i,z_{J_i}\rangle \\
  &=\!\sum_{i\in{\rm gs}(\overline{x})}\rho_{\lambda}'(\|\overline{x}_{J_i}\|_q)\langle \nabla\!F_i(\overline{x}_{J_i}),z_{J_i}\rangle\!+\!\sum_{i\in\overline{{\rm gs}}(\overline{x})\cap\overline{{\rm gs}}(w)}\overline{\eta}_i\langle\overline{\zeta}_i,z_{J_i}\rangle\\
  &\qquad+\!\sum_{i\in\overline{{\rm gs}}(\overline{x})\cap{\rm gs}(w)}|\overline{\eta}_i|\langle{\rm sign}(\overline{\eta}_i)\overline{\zeta}_i,z_{J_i}\rangle\\
  &=\sum_{i\in{\rm gs}(\overline{x})}\rho_{\lambda}'(\|\overline{x}_{J_i}\|_q)\big[\langle \nabla\!F_i(\overline{x}_{J_i}),z_{J_i}\rangle +\langle w_{J_i},\nabla^2\!F_i(\overline{x}_{J_i})w_{J_i}\rangle\big]\\
 &\qquad+\!\sum_{i\in\overline{{\rm gs}}(\overline{x})\cap\overline{{\rm gs}}(w)}\overline{\eta}_i\langle\overline{\zeta}_i,z_{J_i}\rangle
  +\lambda\!\sum_{i\in\overline{{\rm gs}}(\overline{x})\cap{\rm gs}(w)}\langle\nabla F_i(w_{J_i}),z_{J_i}\rangle\\
 &\qquad -\sum_{i\in{\rm gs}(\overline{x})}\rho_{\lambda}'(\|\overline{x}_{J_i}\|_q)\langle w_{J_i},\nabla^2\!F_i(\overline{x}_{J_i})w_{J_i}\rangle.
 \end{align*} 
 Denote by $\Xi(z)$ the sum of the first three terms on the right-hand side. Then, 
 \begin{align}\label{final-ineq}
 \Gamma_0&=\inf_{z\in\mathbb{R}^n}	\sup_{u\in\mathcal{A}_{\vartheta}(F(\overline{x}),dF(\overline{x})(w))}\Big\{\langle u,F''(\overline{x};w,z)\rangle-\Xi(z)\Big\}\nonumber\\
 &\qquad+\sum_{i\in{\rm gs}(\overline{x})}\rho_{\lambda}'(\|\overline{x}_{J_i}\|_q)\langle w_{J_i},\nabla^2\!F_i(\overline{x}_{J_i})w_{J_i}\rangle\nonumber\\
 &=\inf_{z\in\mathbb{R}^n}	\sup_{u\in\mathcal{A}_{\vartheta}(F(\overline{x}),dF(\overline{x})(w))}\Big\{\langle u,F''(\overline{x};w,z)\rangle-\Xi(z)\Big\} +d^2(\overline{\xi}F)(\overline{x})(w),
 \end{align} 
 where the second equality is using equation \eqref{d2F-equa0} and $\overline{\xi}_i=\rho_{\lambda}'(\|\overline{x}_{J_i}\|_q)$ for each $i\in{\rm gs}(\overline{x})$. Now fix any $z\in\mathbb{R}^n$. For every $u\in\mathcal{A}_{\vartheta}(F(\overline{x}),dF(\overline{x})(w))$, by using \eqref{urelation} and \eqref{d2F-equa1} and comparing the expression of $\Xi(z)$ with that of $\langle u,F''(\overline{x};w,z)\rangle$, we conclude that
 \[
  \langle u,F''(\overline{x};w,z)\rangle-\Xi(z)=\!\sum_{i\in\overline{{\rm gs}}(\overline{x})\cap\overline{{\rm gs}}(w)}\big[u_i\|z_{J_i}\|_q-\overline{\eta}_i\langle\overline{\zeta}_i,z_{J_i}\rangle\big],
 \]
 which, along with the fact that $u\in\mathcal{A}_{\vartheta}(F(\overline{x}),dF(\overline{x})(w))$ iff \eqref{urelation} holds, implies that 
 \begin{align*}
  \sup_{u\in\mathcal{A}_{\vartheta}(F(\overline{x}),dF(\overline{x})(w))}\Big\{\langle u,F''(\overline{x};w,z)\rangle-\Xi(z)\Big\}
  =\sum_{i\in\overline{{\rm gs}}(\overline{x})\cap\overline{{\rm gs}}(w)}	\sup_{t\in[-\lambda,\lambda]}\Big\{t\|z_{J_i}\|_q-\overline{\eta}_i\langle\overline{\zeta}_i,z_{J_i}\rangle\Big\}.
 \end{align*}
 Together with the above equality \eqref{final-ineq}, it is immediate to obtain that  
 \begin{equation*}
  \Gamma_0=\sum_{i\in\overline{{\rm gs}}(\overline{x})\cap\overline{{\rm gs}}(w)}\inf_{z\in\mathbb{R}^{J_i}}	\sup_{t\in[-\lambda,\lambda]}\Big\{t\|z\|_q-\overline{\eta}_i\langle\overline{\zeta}_i,z\rangle\Big\}+ d^2(\overline{\xi}F)(\overline{x})(w). 
 \end{equation*}
 Note that $\inf_{z\in\mathbb{R}^{J_i}}\sup_{t\in[-\lambda,\lambda]}\big\{t\|z\|_q-\overline{\eta}_i\langle\overline{\zeta}_i,z\rangle\big\}\!\ge\! 0$ and the equality holds when $z\!=\!0$. This means that the desired \eqref{aim-equa} holds, so $f$ is parabolically regular at $\overline{x}$ for $\overline{v}$. Note that the function $\mathbb{R}^{l}\ni z\mapsto \rho_{\lambda}(\|z\|_q)$ is weakly convex because $\rho_{\lambda}$ is a nondecreasing weakly convex function, so is $f$, which implies that $\widehat{\partial}f(\overline{x})=\partial f(\overline{x})$. Then, $f$ is properly twice epi-differentiable at $\overline{x}$ for $\overline{v}$ by Theorem \ref{theorem-tepi}. The proof is completed.
 \end{proof}
 \subsection{Composite functions of type II}\label{sec4.2}
 \begin{proposition}\label{qcone-prop}
  Let $f=\delta_{K}$. Fix any $\overline{x}\in{\rm dom}\,f$ and $\overline{v}\in \partial\!f(\overline{x})$. Then, condition \eqref{suff-condition2} holds, so $f$ is parabolically regular and properly twice epi-differentiable at $\overline{x}$ for $\overline{v}$. 
  \end{proposition}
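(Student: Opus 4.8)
The plan is to verify the sufficient condition \eqref{suff-condition2} of Theorem \ref{PropSuff} and then read off both conclusions from Theorems \ref{PropSuff} and \ref{theorem-tepi}. Since $K$ is a closed convex cone, $f=\delta_{K}$ is a proper lsc convex function, so $\widehat{\partial}\!f(\overline{x})=\partial\!f(\overline{x})=\mathcal{N}_{K}(\overline{x})$; the function $\vartheta=\delta_{\mathbb{R}_{-}}$ is the indicator of a polyhedral convex set, hence PWTD and regular at every point of ${\rm dom}\,\vartheta=\mathbb{R}_{-}$, and writing $g(\cdot)=\|\cdot\|_{q}$ on $\mathbb{R}^{n-1}$ so that $F(x)=g(x_{2})-x_{1}$, the mapping $F$ is parabolically semidifferentiable by the analysis of Section \ref{sec4.1} (the linear term $-x_{1}$ contributing nothing at second order), so Assumption \ref{ass0} is in force. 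For the MSQC at an arbitrary $\overline{x}\in K$: $F$ is a finite convex function with a Slater point ($F(1,0)=-1<0$), and the elementary segment argument from that point yields, on a neighborhood of $\overline{x}$, the estimate ${\rm dist}(x,K)\le\kappa\max\{F(x),0\}=\kappa\,{\rm dist}_{2}(0,F(x)-\mathbb{R}_{-})$, i.e. metric subregularity of $x\mapsto F(x)-\mathbb{R}_{-}$ at $(\overline{x},0)$. Thus the structural hypotheses of the second form of Theorem \ref{PropSuff} hold, and it remains to exhibit a multiplier $\overline{\xi}$ making \eqref{suff-condition2} valid.

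From Section \ref{sec4.1} one has $dF(\overline{x})(w)=dg(\overline{x}_{2})(w_{2})-w_{1}$ (cf. \eqref{dF-equa}), the formula \eqref{d2F-equa1} for $F''(\overline{x};w,z)$ (with an extra $-z_{1}$), and $d^{2}(\xi F)(\overline{x})(w)=\xi\langle w_{2},\nabla^{2}g(\overline{x}_{2})w_{2}\rangle$ for $\xi\ge 0$ when $\overline{x}_{2}\ne 0$ and $=0$ when $\overline{x}_{2}=0$ (cf. \eqref{d2F-equa0}). Since $\mathbb{R}_{-}$ admits a Slater point, the standard convex subdifferential chain rule gives $\mathcal{N}_{K}(\overline{x})=\{\mu v:\mu\ge 0,\ v\in\partial F(\overline{x}),\ \mu F(\overline{x})=0\}$, a set that is closed because the first coordinate of every $v\in\partial F(\overline{x})$ equals $-1$; hence $\overline{v}=\overline{\lambda}\,\overline{w}^{*}$ for some $\overline{\lambda}\ge 0$ and $\overline{w}^{*}\in\partial F(\overline{x})$, with $\overline{\lambda}=0$ whenever $F(\overline{x})<0$. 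Take $\overline{\xi}:=\overline{\lambda}\in\partial\vartheta(F(\overline{x}))\subset\mathbb{R}_{+}$; then $\overline{\xi}\in\Lambda_{\overline{x},\overline{v}}$, for $F$ being convex, $dF(\overline{x})(w')=\sup_{v\in\partial F(\overline{x})}\langle v,w'\rangle\ge\langle\overline{w}^{*},w'\rangle$, whence $\overline{\xi}\,dF(\overline{x})(w')\ge\overline{\lambda}\langle\overline{w}^{*},w'\rangle=\langle\overline{v},w'\rangle$ for all $w'$.

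The core of the proof is the verification of \eqref{suff-condition2} for each $w\in\mathcal{C}_{f}(\overline{x},\overline{v})$. A short computation shows that $\mathcal{A}_{\vartheta}(F(\overline{x}),dF(\overline{x})(w))$ is $\{0\}$ if $dF(\overline{x})(w)<0$, is $\mathbb{R}_{+}$ if $dF(\overline{x})(w)=0$, and the value $dF(\overline{x})(w)>0$ is incompatible with $w\in\mathcal{C}_{f}(\overline{x},\overline{v})$. If $\overline{v}=0$ (which covers $F(\overline{x})<0$), the left-hand side of \eqref{suff-condition2} equals $0$, matching $d^{2}(\overline{\xi}F)(\overline{x})(w)=0$. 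If $\overline{v}\ne 0$, then $F(\overline{x})=0$ and $\overline{\lambda}>0$, and Lemma \ref{ccone-ffun}(i) forces $dF(\overline{x})(w)=0$, so $\mathcal{A}_{\vartheta}=\mathbb{R}_{+}$ and the left-hand side reduces to $\inf_{z:\,F''(\overline{x};w,z)\le 0}\{-\langle\overline{v},z\rangle\}$ over a nonempty feasible set. When moreover $w_{2}\ne 0$ — automatic if $\overline{x}_{2}\ne 0$, where $\partial F(\overline{x})$ is the singleton $\{(-1,\nabla g(\overline{x}_{2}))\}$, and otherwise (when $\overline{x}=0$) extracted from the equality case of Hölder's inequality on $\mathbb{R}^{n-1}$ with $p:=\frac{q}{q-1}$ — the $\mathbb{R}^{n-1}$-component of $\overline{w}^{*}$ coincides with the gradient entering \eqref{d2F-equa1}, so that $\langle\overline{v},z\rangle=\overline{\lambda}\big(F''(\overline{x};w,z)-\langle w_{2},\nabla^{2}g(\overline{x}_{2})w_{2}\rangle\big)$ (the Hessian term absent when $\overline{x}_{2}=0$), and therefore
\[
\inf_{z:\,F''(\overline{x};w,z)\le 0}\big\{-\langle\overline{v},z\rangle\big\}=\overline{\lambda}\,\langle w_{2},\nabla^{2}g(\overline{x}_{2})w_{2}\rangle=d^{2}(\overline{\xi}F)(\overline{x})(w),
\]
the infimum being attained at any $z$ with $F''(\overline{x};w,z)=0$. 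The remaining possibility $w_{2}=0$ occurs only when $\overline{x}=0$, forcing $w=0$; there $F''(0;0,z)=\|z_{2}\|_{q}-z_{1}$ and $-\langle\overline{v},z\rangle=\overline{\lambda}(z_{1}-\langle\overline{w}^{*}_{2},z_{2}\rangle)\ge 0$ on $\{z:\|z_{2}\|_{q}\le z_{1}\}$ by $\|\overline{w}^{*}_{2}\|_{p}\le 1$, with equality at $z=0$, so the infimum is $0=d^{2}(\overline{\xi}F)(0)(0)$. This proves \eqref{suff-condition2}; Theorem \ref{PropSuff} then gives parabolic regularity of $f$ at $\overline{x}$ for $\overline{v}$, and Theorem \ref{theorem-tepi} (applicable since $\overline{v}\in\widehat{\partial}\!f(\overline{x})$) gives the proper twice epi-differentiability.

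I expect the main obstacle to be the case $\overline{v}\ne 0$: identifying $\mathcal{C}_{f}(\overline{x},\overline{v})$ precisely and extracting from membership in it the Hölder alignment — namely that the $\mathbb{R}^{n-1}$-part of $\overline{w}^{*}$ equals $\nabla g(w_{2})$ in the apex case $\overline{x}=0$ — since it is exactly this alignment that renders $\langle\overline{v},\cdot\rangle$ proportional to $F''(\overline{x};w,\cdot)$ up to the constant $\langle w_{2},\nabla^{2}g(\overline{x}_{2})w_{2}\rangle$ and thereby collapses the inner minimization to its value where $F''(\overline{x};w,z)=0$. A secondary point needing care is keeping the formulas of Section \ref{sec4.1} consistent across the apex case (where $\overline{x}_{2}=0$, $\partial F(0)=\{v:v_{1}=-1,\ \|v_{2}\|_{p}\le 1\}$, and one must split off $w=0$) and the smooth boundary case $\overline{x}_{2}\ne 0$ (singleton subdifferential), together with the routine check that $\{z:F''(\overline{x};w,z)\le 0\}\ne\emptyset$ so each infimum is attained.
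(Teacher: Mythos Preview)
Your proposal is correct and follows the same overall strategy as the paper: verify condition \eqref{suff-condition2} by case analysis, then invoke Theorems \ref{PropSuff} and \ref{theorem-tepi}. The paper organizes the cases by the location of $\overline{x}$ (interior, apex $\overline{x}=0$, smooth boundary $\overline{x}\in{\rm bd}\,K\backslash\{0\}$) and for the smooth-boundary case defers entirely to the duality-based Proposition \ref{SmoothResult}; you instead organize by $\overline{v}=0$ versus $\overline{v}\ne 0$ and treat all boundary cases through the single identity $\langle\overline{v},z\rangle=\overline{\lambda}\big(F''(\overline{x};w,z)-\langle w_{2},\nabla^{2}g(\overline{x}_{2})w_{2}\rangle\big)$, which is a bit more direct and self-contained since it avoids the perturbation-function machinery of Proposition \ref{SmoothResult}.

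One small correction: your parenthetical claim that ``$w_{2}\ne 0$ is automatic if $\overline{x}_{2}\ne 0$'' is false (for instance $w=0$ lies in $\mathcal{C}_{f}(\overline{x},\overline{v})$), but this does not damage the argument. When $\overline{x}_{2}\ne 0$ the gradient appearing in \eqref{d2F-equa1} is $\nabla g(\overline{x}_{2})$, and $\overline{w}^{*}_{2}=\nabla g(\overline{x}_{2})$ holds simply because $\partial F(\overline{x})$ is a singleton, so your key identity is valid for every $w_{2}$ with no H\"older alignment needed; the H\"older equality case is only required at the apex $\overline{x}=0$ with $w_{2}\ne 0$, exactly as you use it. Likewise your statement ``the remaining possibility $w_{2}=0$ occurs only when $\overline{x}=0$'' should be weakened accordingly, but the computation in that paragraph is only needed at $\overline{x}=0$ anyway.
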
 
 \begin{proof}
  Note that $f$ is a convex function due to $f=\delta_{K}$, and the MSQC holds for constraint system $F(x)\in{\rm dom}\,\vartheta$ at $\overline{x}$ because the Slater CQ holds. Fix any $w\in\mathcal{C}_{f}(\overline{x},\overline{v})$. Write $w=\!(w_1,w_2)\in\mathbb{R}\times\mathbb{R}^{n-1}$ and $\overline{v}=(\overline{v}_1,\overline{v}_2)\in\mathbb{R}\times\mathbb{R}^{n-1}$. By Proposition \ref{ccone-ffun} (i), $w\in\mathcal{T}_{{\rm dom}\,f}(\overline{x})$ and $d\vartheta(F(\overline{x}))(dF(\overline{x})(w))=\langle\overline{v},w\rangle$. Hence, $dF(\overline{x})(w)\in{\rm dom}\,d\vartheta(F(\overline{x}))=\!\mathcal{T}_{\mathbb{R}_-}(F(\overline{x}))$ where the equality is due to Lemma \ref{sderive-PTD} (ii). By invoking \cite[Theorem 8.2]{RW98}, 
  \begin{equation}\label{SconeCri}
  0=\delta_{\mathcal{T}_{\mathbb{R}_-}(F(\overline{x}))}(dF(\overline{x})(w))=d\vartheta(F(\overline{x}))(dF(\overline{x})(w))=\langle\overline{v},w\rangle.
  \end{equation}
  We proceed the proof by the following three cases: $\overline{x}\in{\rm int}\,K,\overline{x}=0$ and $\overline{x}\in{\rm bd}\,K\backslash\{0\}$. 
  
  \noindent	
  {\bf Case 1: $\overline{x}\in {\rm int}\,K$.} Now $F(\overline{x})<0$ and $\partial\! f(\overline{x})=\mathcal{N}_{K}(\overline{x})=\{0\}$. Then $\overline{v}=0,\,\partial \vartheta(F(\overline{x}))=\{0\},\mathcal{A}_{\vartheta}(F(\overline{x}),dF(\overline{x})(w))=\{0\}$ and $\Lambda_{\overline{x},\overline{v}}=\{0\}$. 
  Take $\overline{\xi}=0$. It is immediate to obtain 
   \begin{equation*}
  \!\inf_{z\in\mathbb{R}^n}\!\Big\{\sup_{u\in\mathcal{A}_{\vartheta}(F(\overline{x}),dF(\overline{x})(w))}\!\langle u,F''(\overline{x};w,z)\rangle-\langle\overline{v},z\rangle\Big\}=d^2(\overline{\xi}F)(\overline{x})(w)=0.
 \end{equation*}
 
  \noindent	
  {\bf Case 2: $\overline{x}=0$.} Now 	$F(\overline{x})=0,\partial\vartheta(F(\overline{x}))=\mathbb{R}_+$ and $dF(\overline{x})(w)\in \mathcal{T}_{\mathbb{R}_-}(F(\overline{x}))=\mathbb{R}_-$, so 
  \begin{equation}\label{Aset-equa}
  \mathcal{A}_{\vartheta}(F(\overline{x}),dF(\overline{x})(w))=\big\{u\in\mathbb{R}_{+}\,|\, u(\|w_2\|_q-w_1)=0\big\}.
  \end{equation}  
  In addition, $\partial f(\overline{x})= \mathcal{N}_{K}(\overline{x})=K^{\circ}\!=\{u=(u_1,u_2)\in\!\mathbb{R}\times\mathbb{R}^{n-1}\,|\,-\!u_1\ge\|u_2\|_p\}$, where $K^{\circ}$ denotes the negative polar cone of $K$. From $\overline{v}\in K^{\circ}$, we have $\|\overline{v}_2\|_p\leq -\overline{ v}_1$.  
  
  \noindent
  {\bf Subcase 2.1: $w=0$.} In this case, from the above equation \eqref{Aset-equa}, it follows that   
  \[
   \inf_{z\in\mathbb{R}^n}\!\Big\{\sup_{u\in\mathcal{A}_{\vartheta}(F(\overline{x}),dF(\overline{x})(w))}\!\langle u,F''(\overline{x};w,z)\rangle-\langle\overline{v},z\rangle\Big\}
   =\inf_{z\in\mathbb{R}^n}\sup_{u\ge 0}\big\{u(\|z_2\|_q-z_1)-\langle\overline{v},z\rangle\big\}
  \]  
  For any $z=(z_1,z_2)\in\mathbb{R}\times\mathbb{R}^{n-1}$, when $\|z_2\|_q>z_1$, we have $\sup_{u\ge 0}u(\|z_2\|_q-z_1)=\infty$; and when $\|z_2\|_q\le z_1$, we have $\sup_{u\ge 0}u(\|z_2\|_q-z_1)=0$. This implies that 
  \[
    \inf_{z\in\mathbb{R}^n}\sup_{u\ge 0}\big\{u(\|z_2\|_q-z_1)-\langle\overline{v},z\rangle\big\}
    =\inf_{\|z_2\|_q\le z_1}-\langle\overline{v},z\rangle=0,
  \]
  where the second equality is due to $-\langle\overline{v},z\rangle=-\overline{v}_1z_1-\overline{v}_2^{\top}z_2\ge\|\overline{v}_2\|_p(z_1-\|z_2\|_q)\ge 0$. 
 This means that $\inf_{z\in\mathbb{R}^n}\!\big\{\sup_{u\in\mathcal{A}_{\vartheta}(F(\overline{x}),dF(\overline{x})(w))}\!\langle u,F''(\overline{x};w,z)\rangle-\langle\overline{v},z\rangle\big\}=0$. While from \eqref{d2F-equa0}, $d^2(\xi F)(\overline{x})(w)=0$ for any $\xi\in\mathbb{R}^n$. Thus, for this case, condition \eqref{suff-condition2} holds.  
  
  \noindent
  {\bf Subcase 2.2: $w\ne 0$.} 
  From $\mathbb{R}_-\ni dF(\overline{x})(w)=\|w_2\|_q-w_1$, where the equality is due to \eqref{dF-equa}, we have $w_1\ge\|w_2\|_q$. When $w_1>\|w_2\|_q$, we must have $\overline{v}=0$ (if not, $\overline{v}_1<0$, and $0=\langle \overline{v},w\rangle=\overline{v}_1w_1+\overline{v}_2^{\top}w_2<\overline{v}_1\|w_2\|_q+\|\overline{v}_2\|_p\|w_2\|_q\le 0$, a contradiction), which along with $\mathcal{A}_{\vartheta}(F(\overline{x}),dF(\overline{x})(w))=\{0\}$ and $d^2(\xi F)(\overline{x})(w)=0$ for any $\xi\in\mathbb{R}^n$ implies that  
  \[
   \inf_{z\in\mathbb{R}^n}\!\Big\{\sup_{u\in\mathcal{A}_{\vartheta}(F(\overline{x}),dF(\overline{x})(w))}\!\langle u,F''(\overline{x};w,z)\rangle-\langle\overline{v},z\rangle\Big\}=0=d^2(\overline{\xi}F)(\overline{x})(w)\ \ {\rm for}\ \overline{\xi}=0.
  \]  
  This shows that condition \eqref{suff-condition2} holds. The rest only considers the case that $w_1=\|w_2\|_q$. In this case, $\mathcal{A}_{\vartheta}(F(\overline{x}),dF(\overline{x})(w))=\mathbb{R}_{+}$ follows \eqref{Aset-equa}. While from \eqref{d2F-equa1}, for any $u\in \mathbb{R}$ and $z\in\mathbb{R}^n$, $\langle u,F''(\overline{x};w,z)\rangle=u\langle \nabla F(w),z\rangle$. When $\overline{v}=0$, it holds that 
  \begin{align*}
   &\inf_{z\in\mathbb{R}^n}\!\Big\{\sup_{u\in\mathcal{A}_{\vartheta}(F(\overline{x}),dF(\overline{x})(w))}\!\langle u,F''(\overline{x};w,z)\rangle-\langle\overline{v},z\rangle\Big\}\\
   &=\inf_{z\in\mathbb{R}^n}\sup_{u\ge 0}\big\{u\langle \nabla F(w),z\rangle\big\}
   =\inf_{z\in \mathbb{R}^n}\{0\,|\, \langle \nabla F(w),z\rangle\leq 0\}=0,
  \end{align*}
  which along with $d^2(\xi F)(\overline{x})(w)=0$ for any $\xi\in\mathbb{R}^n$ shows that condition \eqref{suff-condition2} holds. Hence, it suffices to consider the case that $\overline{v}\ne 0$. Recall that $0=\langle\overline{v},w\rangle=\overline{v}_1w_1+\overline{v}_2^{\top}w_2\le \overline{v}_1w_1+\|\overline{v}_2\|_qw_1$, which implies that $\overline{v}_1\ge-\|\overline{v}_2\|_p$. Together with $\overline{v}_1\le-\|\overline{v}_2\|_p$, we have $\overline{v}_1=-\|\overline{v}_2\|_p$. Thus, $\overline{v}_2^{\top}w_2=\|\overline{v}_2\|_p\|w_2\|_q$. This, along with $0\ne \overline{v}_1=-\|\overline{v}_2\|_p$ and $0\ne w_1=\|w_2\|_q$, means that $\overline{v}_2=\|\overline{v}_2\|_p\eta=-\overline{v}_1\eta$ with $\eta:={\rm sign}(w_2)\circ|w_2|^{q-1} \|w_2\|_q^{1-q}$. Note that $\nabla F(w)=(-1;\eta)$. Clearly, $\overline{v}_1\nabla F(w)=-\overline{v}$ and $\overline{v}_1\le 0$. 
  In addition, from \eqref{d2F-equa1}, for any $z\in\mathbb{R}^n$, $F''(\overline{x};w,z)=\langle\nabla F(w),z\rangle$. Along with $\mathcal{A}_{\vartheta}(F(\overline{x}),dF(\overline{x})(w))=\mathbb{R}_{+}$, 
  \begin{align*}
   &\inf_{z\in\mathbb{R}^n}\!\Big\{\sup_{u\in\mathcal{A}_{\vartheta}(F(\overline{x}),dF(\overline{x})(w))}\!\langle u,F''(\overline{x};w,z)\rangle-\langle\overline{v},z\rangle\Big\}\\
   &=\inf_{z\in \mathbb{R}^n}\{-\langle\overline{v},z\rangle\,|\, \langle \nabla F(w),z\rangle\leq 0\}=\max_{t\le 0}\,\{0\ |\ t\nabla F(w)=-\overline{v}\}=0
  \end{align*}
  where the second equality is due to the dual of linear programs, and the third one is by the feasibility of $\overline{v}$. Since $d^2(\xi F)(\overline{x})(w)=0$ for any $\xi\in\mathbb{R}^n$, condition \eqref{suff-condition2} also holds. 
   
  \medskip
  \noindent 	
  {\bf Case 3: $\overline{x}\in {\rm bd}\,K\backslash\{0\}$.} Now  $\|\overline{x}_2\|_q=\overline{x}_1>0$. Clearly, $F$ is twice continuously differentiable in a neighborhood of $\overline{x}$. From \cite[Exercise 6.7]{RW98}, $\partial\!f(\overline{x})=\nabla F(\overline{x})\partial\vartheta(F(\overline{x}))$. Invoking Proposition \ref{SmoothResult} later with $\vartheta=\delta_{\mathbb{R}_{-}}$, we have condition \eqref{suff-condition2} holds for this case.  
  \end{proof}
  \begin{remark}
  When $q=2$, the parabolic regularity of $\delta_{K}$ is implied by the second-order regularity of $K$ by \cite[Propositions 3.103 \& 3.136]{BS00} and \cite[Lemma 15]{BS2005}, and Mohammadi et al. also provided a direct proof for this fact in \cite[Example 5.8]{MohMS21-TAMS} by using the developed chain rule for parabolic regularity and reformulating the second order cone as the smooth constraint system $(\|x_2\|^2\!-x^2_1,-x_1)\in \mathbb{R}^2_-$ for $(x_1,x_2)\in\!\mathbb{R}\times \mathbb{R}^{n-1}$.
  \end{remark}

  From Proposition \ref{qcone-prop}, we immediately obtain the parabolic regularity of the indicator of the Cartesian product of several $q$-order cones, which is stated as follows.  
  \begin{corollary}
  Let $K\!=K_1\times\cdots\times K_m$ with $K_i:=\{(x_{i1},x_{i2})\,|\,\|x_{i2}\|_q\le x_{i1}\}$ for each $i\in[m]$. Fix any $\overline{x}=(\overline{x}_1;\overline{x}_2;\ldots;\overline{x}_m)\in K$ and  $\overline{v}=(\overline{v}_1;\overline{v}_2;\ldots;\overline{v}_m)\in\mathcal{N}_{K}(\overline{x})$. Then, $\delta_{K}$ is parabolically regular at $\overline{x}$ for $\overline{v}$ and with $d^2\delta_{K}(\overline{x}|\overline{v})(w)=\sum_{i=1}^md^2\delta_{K_i}(\overline{x}_i|\overline{v}_i)(w_i)$ for each $w\in \mathcal{T}_{K}(\overline{x})\cap \{\overline{v}\}^\perp$.
  \end{corollary}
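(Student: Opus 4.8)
The plan is to reduce everything to Proposition~\ref{qcone-prop} applied blockwise, using the separable structure $\delta_{K}(x)=\sum_{i=1}^{m}\delta_{K_i}(x_i)$ for $x=(x_1,\ldots,x_m)$. First I would record the block splitting of the objects involved. Each $K_i$ is a nonempty closed convex set and $K=\prod_{i=1}^{m}K_i$, so by \cite[Proposition 6.41]{RW98} (and, for the second-order tangent set, directly from the definition after choosing a product norm on the ambient space, which is harmless since these cones do not depend on the norm) one has $\mathcal{T}_{K}(\overline{x})=\prod_{i}\mathcal{T}_{K_i}(\overline{x}_i)$, $\mathcal{N}_{K}(\overline{x})=\prod_{i}\mathcal{N}_{K_i}(\overline{x}_i)$ and $\mathcal{T}^2_{K}(\overline{x},w)=\prod_{i}\mathcal{T}^2_{K_i}(\overline{x}_i,w_i)$ for $w\in\mathcal{T}_K(\overline{x})$. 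In particular $\overline{v}_i\in\mathcal{N}_{K_i}(\overline{x}_i)=\partial\delta_{K_i}(\overline{x}_i)=\widehat{\partial}\delta_{K_i}(\overline{x}_i)$ for every $i$, so Proposition~\ref{qcone-prop} applies at each $(\overline{x}_i,\overline{v}_i)$: $\delta_{K_i}$ is parabolically regular and properly twice epi-differentiable there. Combining these product formulas with $d\delta_{K}(\overline{x})=\delta_{\mathcal{T}_K(\overline{x})}$ and $d^2\delta_{K}(\overline{x})(w|\cdot)=\delta_{\mathcal{T}^2_K(\overline{x},w)}$ (valid for $w\in\mathcal{T}_K(\overline{x})$) yields $d\delta_{K}(\overline{x})(w)=\sum_{i}d\delta_{K_i}(\overline{x}_i)(w_i)$ and $d^2\delta_{K}(\overline{x})(w|z)=\sum_{i}d^2\delta_{K_i}(\overline{x}_i)(w_i|z_i)$; moreover $\langle\overline{v}_i,w_i\rangle\le 0$ for $w_i\in\mathcal{T}_{K_i}(\overline{x}_i)$ by \eqref{Rsdiff-sderiv}, so vanishing of the sum of nonpositive terms $\langle\overline{v},w\rangle=\sum_{i}\langle\overline{v}_i,w_i\rangle$ on $\mathcal{T}_K(\overline{x})$ forces each term to vanish, whence $\mathcal{C}_{\delta_{K}}(\overline{x},\overline{v})=\mathcal{T}_{K}(\overline{x})\cap\{\overline{v}\}^{\perp}=\prod_{i}\mathcal{C}_{\delta_{K_i}}(\overline{x}_i,\overline{v}_i)$.

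Second, I would establish the twice epi-differentiability of $\delta_K$ at $\overline{x}$ for $\overline{v}$, together with the claimed formula, by epi-convergence of the second-order difference quotients. For any $\tau>0$, $\Delta^2_{\tau}\delta_{K}(\overline{x}|\overline{v})(w')=\sum_{i}\Delta^2_{\tau}\delta_{K_i}(\overline{x}_i|\overline{v}_i)(w_i')$. Fix $\tau_k\downarrow 0$ and let $g(w):=\sum_{i}d^2\delta_{K_i}(\overline{x}_i|\overline{v}_i)(w_i)$. For the epi-liminf inequality, given $w^k\to w$ the definition of the second subderivative gives $\liminf_k\Delta^2_{\tau_k}\delta_{K_i}(\overline{x}_i|\overline{v}_i)(w_i^k)\ge d^2\delta_{K_i}(\overline{x}_i|\overline{v}_i)(w_i)>-\infty$ (the lower bound is finite by the properness from Proposition~\ref{qcone-prop}), so superadditivity of $\liminf$, which never meets an indeterminate $\infty-\infty$ here, yields $\liminf_k\Delta^2_{\tau_k}\delta_{K}(\overline{x}|\overline{v})(w^k)\ge g(w)$. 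For the epi-limsup inequality, the twice epi-differentiability of each $\delta_{K_i}$ supplies $w_i^k\to w_i$ with $\Delta^2_{\tau_k}\delta_{K_i}(\overline{x}_i|\overline{v}_i)(w_i^k)\to d^2\delta_{K_i}(\overline{x}_i|\overline{v}_i)(w_i)$, and with $w^k:=(w_1^k,\ldots,w_m^k)$ summing these finitely many convergent sequences gives $\Delta^2_{\tau_k}\delta_{K}(\overline{x}|\overline{v})(w^k)\to g(w)$. Hence $\Delta^2_{\tau_k}\delta_{K}(\overline{x}|\overline{v})\xrightarrow[]{e}g$ for every $\tau_k\downarrow 0$, so $\delta_{K}$ is twice epi-differentiable at $\overline{x}$ for $\overline{v}$ with $d^2\delta_{K}(\overline{x}|\overline{v})=g$; this $g$ is proper since ${\rm dom}\,g=\prod_{i}\mathcal{C}_{\delta_{K_i}}(\overline{x}_i,\overline{v}_i)\ni 0$ and no summand takes the value $-\infty$. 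Restricting to $w\in\mathcal{T}_{K}(\overline{x})\cap\{\overline{v}\}^{\perp}$ gives $d^2\delta_{K}(\overline{x}|\overline{v})(w)=\sum_{i}d^2\delta_{K_i}(\overline{x}_i|\overline{v}_i)(w_i)$.

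Finally, parabolic regularity follows by stitching the factorwise parabolic regularity to the block splitting of the parabolic subderivative. Fix $w\in\mathcal{C}_{\delta_{K}}(\overline{x},\overline{v})$; then $w_i\in\mathcal{C}_{\delta_{K_i}}(\overline{x}_i,\overline{v}_i)$ for each $i$, and since $\langle\overline{v},z\rangle=\sum_{i}\langle\overline{v}_i,z_i\rangle$ and $d^2\delta_{K}(\overline{x})(w|z)=\sum_{i}d^2\delta_{K_i}(\overline{x}_i)(w_i|z_i)$, the infimum over $z$ decouples: $\inf_{z}\{d^2\delta_{K}(\overline{x})(w|z)-\langle\overline{v},z\rangle\}=\sum_{i}\inf_{z_i}\{d^2\delta_{K_i}(\overline{x}_i)(w_i|z_i)-\langle\overline{v}_i,z_i\rangle\}$, and each inner infimum equals $d^2\delta_{K_i}(\overline{x}_i|\overline{v}_i)(w_i)$ by the parabolic regularity of $\delta_{K_i}$ at $(\overline{x}_i,\overline{v}_i)$, so the right-hand side equals $g(w)=d^2\delta_{K}(\overline{x}|\overline{v})(w)$ --- which is exactly the identity in Definition~\ref{pregular-def}. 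The proof is conceptually straightforward; the only delicate point is bookkeeping: one must verify that the distance to the product $K$ genuinely factorizes under a suitable product norm, so that all tangent/second-order-tangent objects and all the difference quotients decouple, and keep track of the fact that the properness of each $d^2\delta_{K_i}(\overline{x}_i|\overline{v}_i)$ guaranteed by Proposition~\ref{qcone-prop} is what makes every $\liminf$/limit manipulation free of an indeterminate $\infty-\infty$.
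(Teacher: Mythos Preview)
Your argument is correct and is precisely the separable, block-wise application of Proposition~\ref{qcone-prop} that the paper has in mind when it says the corollary is obtained ``immediately'' from that proposition; the paper gives no further proof. One small point worth making explicit: the product formula $\mathcal{T}^2_{K}(\overline{x},w)=\prod_{i}\mathcal{T}^2_{K_i}(\overline{x}_i,w_i)$ is not a purely definitional consequence, because the outer second-order tangent set only asks for \emph{some} sequence $\tau_k\downarrow 0$, and different blocks could in principle require different sequences---here the inclusion $\prod_i\mathcal{T}^2_{K_i}(\overline{x}_i,w_i)\subset\mathcal{T}^2_{K}(\overline{x},w)$ is saved by the parabolic derivability of each $K_i$ (Corollary~\ref{Stcone-domf} applied to the type~II data), which upgrades each block to the inner second-order tangent set and lets you use a common $\tau_k$.
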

  \subsection{Composite functions $f$ of type III}\label{sec5.3}
      
  For this class of $f$, since $F$ is twice differentiable on the open set $\mathcal{O}$, at any $\overline{x}\in{\rm dom}\,f$, 
  \[
    dF(\overline{x})(w)=F'(\overline{x})w\ \ {\rm and}\ \ d^2(\xi F)(\overline{x})(w)
    =\langle \xi, \nabla^2F(\overline{x})(w,w)\rangle\quad\forall w\in\mathbb{X}, \xi\in\mathbb{R}^m,
  \]
  and then $\Lambda_{\overline{x},\overline{v}}$ in Proposition  \ref{PropSuff} is specified as 
  $\Lambda_{\overline{x},\overline{v}}=\{\xi\!\in\!\partial\vartheta(F(\overline{x}))
  \,|\, \nabla F(\overline{x})\xi\!=\!\overline{v}\}$.  
  \begin{proposition}\label{SmoothResult}
  Consider any $\overline{x}\in{\rm dom}\,f$ and any $\overline{v}\in\partial\!f(\overline{x})$. Suppose that the MSQC holds for constraint system $F(x)\in{\rm dom}\,\vartheta$, that $\partial\!f(\overline{x})\subset\nabla F(\overline{x})\partial\vartheta(F(\overline{x}))$, and that  $\vartheta$ is regular at $F(\overline{x})$. Then, condition \eqref{suff-condition2} holds, and consequently, $f$ is parabolically regular and properly twice epi-differentiable at $\overline{x}$ for $\overline{v}$.  
  \end{proposition}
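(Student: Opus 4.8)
The plan is to verify the numerical identity \eqref{suff-condition2} directly, recognizing its left-hand side as the value of a polyhedral convex program in $z$ and dualizing. Write $A:=F'(\overline{x})$, so that $\nabla F(\overline{x})$ is its adjoint, and recall from the start of this subsection that twice differentiability of $F$ on $\mathcal{O}$ gives $dF(\overline{x})(w)=Aw$, $d^2(\xi F)(\overline{x})(w)=\langle\xi,\nabla^2F(\overline{x})(w,w)\rangle$ and $\Lambda_{\overline{x},\overline{v}}=\{\xi\in\partial\vartheta(F(\overline{x}))\mid\nabla F(\overline{x})\xi=\overline{v}\}$, while a second-order Taylor expansion of $F$ at $\overline{x}$ yields $F''(\overline{x};w,z)=Az+\nabla^2F(\overline{x})(w,w)$. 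First I would record three structural facts: (a) $\overline{v}\in\partial\!f(\overline{x})\subset\nabla F(\overline{x})\partial\vartheta(F(\overline{x}))$ makes $\Lambda_{\overline{x},\overline{v}}$, and a fortiori $\partial\vartheta(F(\overline{x}))$, nonempty; (b) by regularity of $\vartheta$ at $F(\overline{x})$ and \eqref{Rsdiff-sderiv}, $\partial\vartheta(F(\overline{x}))=\widehat{\partial}\vartheta(F(\overline{x}))=\{u\mid\langle u,w'\rangle\le d\vartheta(F(\overline{x}))(w')\ \forall w'\}$, and since $d\vartheta(F(\overline{x}))$ is a proper piecewise linear function (Lemma \ref{sderive-PTD}(iii)), this set, hence $\Lambda_{\overline{x},\overline{v}}$ and, for each fixed $w$, the face $\mathcal{A}_\vartheta(F(\overline{x}),dF(\overline{x})(w))$, is polyhedral convex; (c) for every $w\in\mathcal{C}_{\!f}(\overline{x},\overline{v})$ and $\xi\in\Lambda_{\overline{x},\overline{v}}$ one has $\langle\xi,dF(\overline{x})(w)\rangle=\langle\nabla F(\overline{x})\xi,w\rangle=\langle\overline{v},w\rangle=d\vartheta(F(\overline{x}))(dF(\overline{x})(w))$, the last equality by Lemma \ref{ccone-ffun}(i), so that $\Lambda_{\overline{x},\overline{v}}\subset\mathcal{A}_\vartheta(F(\overline{x}),dF(\overline{x})(w))\subset\partial\vartheta(F(\overline{x}))$, whence $\{\eta\in\mathcal{A}_\vartheta(F(\overline{x}),dF(\overline{x})(w))\mid\nabla F(\overline{x})\eta=\overline{v}\}=\Lambda_{\overline{x},\overline{v}}$.

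Next, fix $w\in\mathcal{C}_{\!f}(\overline{x},\overline{v})$, put $Q_w:=\nabla^2F(\overline{x})(w,w)$, $\mathcal{A}(w):=\mathcal{A}_\vartheta(F(\overline{x}),dF(\overline{x})(w))$, and $\sigma_{\mathcal{A}(w)}(\zeta):=\sup_{u\in\mathcal{A}(w)}\langle u,\zeta\rangle$. By Proposition \ref{psderive-PTD}(iv) (the regularity clause), the inner supremum in \eqref{suff-condition2} equals $\sigma_{\mathcal{A}(w)}(F''(\overline{x};w,z))=\sigma_{\mathcal{A}(w)}(Az+Q_w)$, so the left-hand side of \eqref{suff-condition2} is $\Gamma_0(w):=\inf_{z\in\mathbb{X}}\{\sigma_{\mathcal{A}(w)}(Az+Q_w)-\langle\overline{v},z\rangle\}$, a minimization of a polyhedral convex function of $z$. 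Its value is finite: it is bounded below by $\langle\overline{\xi},Q_w\rangle$ for any $\overline{\xi}\in\Lambda_{\overline{x},\overline{v}}$ (since $\overline{\xi}\in\mathcal{A}(w)$ and $\nabla F(\overline{x})\overline{\xi}=\overline{v}$ give $\sigma_{\mathcal{A}(w)}(Az+Q_w)-\langle\overline{v},z\rangle\ge\langle\nabla F(\overline{x})\overline{\xi},z\rangle+\langle\overline{\xi},Q_w\rangle-\langle\overline{v},z\rangle=\langle\overline{\xi},Q_w\rangle$), and bounded above because \eqref{Uboundineq} together with Proposition \ref{psderive-PTD}(iv) and the finiteness of $d^2\vartheta(F(\overline{x}))(dF(\overline{x})(w))$ (Proposition \ref{ssderive-PTD}(i)) yields $\Gamma_0(w)\le d^2\!f(\overline{x}|\overline{v})(w)-d^2\vartheta(F(\overline{x}))(dF(\overline{x})(w))<\infty$. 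Applying the Fenchel--Rockafellar duality theorem for polyhedral convex functions, which holds with no constraint qualification once the primal value is finite, I obtain $\Gamma_0(w)=\max\{\langle\eta,Q_w\rangle\mid\eta\in\mathcal{A}(w),\ \nabla F(\overline{x})\eta=\overline{v}\}=\max_{\xi\in\Lambda_{\overline{x},\overline{v}}}\langle\xi,Q_w\rangle$ by (c), the maximum being attained since $\Lambda_{\overline{x},\overline{v}}$ is polyhedral and the value is finite. Choosing $\overline{\xi}=\overline{\xi}_w\in\Lambda_{\overline{x},\overline{v}}$ to attain it gives $\Gamma_0(w)=\langle\overline{\xi},Q_w\rangle=d^2(\overline{\xi}F)(\overline{x})(w)$, which is exactly \eqref{suff-condition2} at $w$. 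Since the proof of Theorem \ref{PropSuff} fixes the direction $w$ before selecting $\overline{\xi}$, this $w$-by-$w$ verification is enough, and Theorem \ref{PropSuff} (the variant phrased through regularity of $\vartheta$) then yields that $f$ is parabolically regular at $\overline{x}$ for $\overline{v}$.

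For the proper twice epi-differentiability I would invoke Theorem \ref{theorem-tepi}, which additionally requires $\overline{v}\in\widehat{\partial}\!f(\overline{x})$. This holds here: since $F$ is smooth, the inclusion $\nabla F(\overline{x})\widehat{\partial}\vartheta(F(\overline{x}))\subset\widehat{\partial}\!f(\overline{x})$ follows directly from the definition of the regular subdifferential (using $\|F(x')-F(\overline{x})\|=O(\|x'-\overline{x}\|)$), and combining it with the regularity of $\vartheta$ and the standing assumption $\partial\!f(\overline{x})\subset\nabla F(\overline{x})\partial\vartheta(F(\overline{x}))$ produces $\nabla F(\overline{x})\widehat{\partial}\vartheta(F(\overline{x}))\subset\widehat{\partial}\!f(\overline{x})\subset\partial\!f(\overline{x})\subset\nabla F(\overline{x})\partial\vartheta(F(\overline{x}))=\nabla F(\overline{x})\widehat{\partial}\vartheta(F(\overline{x}))$, forcing $\widehat{\partial}\!f(\overline{x})=\partial\!f(\overline{x})$, hence $\overline{v}\in\widehat{\partial}\!f(\overline{x})$. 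Theorem \ref{theorem-tepi} then applies and gives \eqref{equa-theorem2}.

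I expect the middle step to be the main obstacle: establishing $\Gamma_0(w)=\max_{\xi\in\Lambda_{\overline{x},\overline{v}}}\langle\xi,Q_w\rangle$ with \emph{no} duality gap. The generic Fenchel duality theorem needs a relative-interior condition that can fail here, because $\partial\vartheta(F(\overline{x}))$, and hence $\mathcal{A}(w)$, may be unbounded (as it is, for instance, when $\vartheta$ is an indicator function, which is precisely the relevant case in types (II) and (IV)); what rescues the argument is the PWTD structure, which forces $\partial\vartheta(F(\overline{x}))$ and $\mathcal{A}(w)$ to be polyhedral, so that polyhedral Fenchel--Rockafellar duality applies unconditionally. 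A secondary bookkeeping point is to keep straight that the ``$\le$'' half of \eqref{suff-condition2} comes entirely from \eqref{Uboundineq} combined with the regular-subdifferential form of the parabolic subderivative in Proposition \ref{psderive-PTD}(iv), while the ``$\ge$'' half is the one-line estimate displayed above.
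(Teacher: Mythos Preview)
Your argument is correct and takes a genuinely different route to the key strong-duality step than the paper's proof. Both proofs reduce \eqref{suff-condition2} to showing that
\[
\Gamma_0(w)=\inf_{z\in\mathbb{X}}\Big\{\sigma_{\mathcal{A}(w)}\big(F'(\overline{x})z+\nabla^2F(\overline{x})(w,w)\big)-\langle\overline{v},z\rangle\Big\}
=\max_{\xi\in\Lambda_{\overline{x},\overline{v}}}\big\langle\xi,\nabla^2F(\overline{x})(w,w)\big\rangle,
\]
but the paper closes the duality gap by introducing a perturbation value function $\Upsilon(p)$ and proving, via the MSQC and Lemma~\ref{MSw-lemma}, that $\Upsilon$ is calm at $0$, hence $\partial\Upsilon(0)\ne\emptyset$, so Lemma~\ref{DualProLem} applies with equality. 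You instead observe that the PWTD structure of $\vartheta$ forces $\partial\vartheta(F(\overline{x}))=\bigcap_{k\in J_{F(\overline{x})}}[\nabla\vartheta_k(F(\overline{x}))+\mathcal{N}_{C_k}(F(\overline{x}))]$ and hence $\mathcal{A}(w)$ to be polyhedral, so that polyhedral Fenchel--Rockafellar duality gives the equality with dual attainment as soon as the primal value is finite, bypassing the perturbation argument entirely. Your route is shorter and more elementary here; the paper's route is the one that would survive if $\partial\vartheta(F(\overline{x}))$ were merely closed convex rather than polyhedral.

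Two minor remarks. First, the displayed inequality ``$\Gamma_0(w)\le d^2\!f(\overline{x}|\overline{v})(w)-d^2\vartheta(F(\overline{x}))(dF(\overline{x})(w))$'' has the wrong sign: \eqref{Uboundineq} gives $d^2\!f(\overline{x}|\overline{v})(w)\le \Gamma_0(w)+d^2\vartheta(\ldots)<\infty$, not the reverse. What you actually need and use is only $\Gamma_0(w)<\infty$, which follows directly from the second inequality in \eqref{Uboundineq} and the finiteness of $d^2\vartheta(F(\overline{x}))(dF(\overline{x})(w))$; the conclusion is unaffected. Second, your explicit verification that $\overline{v}\in\widehat{\partial}\!f(\overline{x})$ (via the chain $\nabla F(\overline{x})\widehat{\partial}\vartheta(F(\overline{x}))\subset\widehat{\partial}\!f(\overline{x})\subset\partial\!f(\overline{x})\subset\nabla F(\overline{x})\partial\vartheta(F(\overline{x}))=\nabla F(\overline{x})\widehat{\partial}\vartheta(F(\overline{x}))$) is a point the paper's own proof leaves implicit; it is needed to invoke Theorem~\ref{theorem-tepi}, and your argument for it is correct.
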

 \begin{proof}
  As $\overline{v}\in\partial\!f(\overline{x})\subset\nabla F(\overline{x})\partial\vartheta(F(\overline{x}))$, there is $\overline{\xi}\in\partial\vartheta(F(\overline{x}))$ such that $\overline{v}=\nabla F(\overline{x})\overline{\xi}$. Fix any $w\in\mathcal{C}_{f}(\overline{x},\overline{v})$. By Lemma \ref{ccone-ffun} (i) and $\overline{v}=\nabla F(\overline{x})\overline{\xi}$, $F'(\overline{x})w\in\mathcal{C}_{\vartheta}(F(\overline{x}),\xi)$ for any $\xi\in\Lambda_{\overline{x},\overline{v}}$.  
  By Definition \ref{psemi-deriv}, $F''(\overline{x};w,z)=\nabla^2F(\overline{x})(w,w)\!+\!F'(\overline{x})z$ for $z\in\mathbb{X}$. To prove that condition \eqref{suff-condition2} holds, we introduce the following optimal value function
  \begin{equation}\label{value-fun}
  \Upsilon(p)\!:=\inf_{z\in\mathbb{X}}\sup_{u\in\mathcal{A}(F(\overline{x}),F'(\overline{x})w)}\Big\{\langle u,\nabla^2F(\overline{x})(w,w)\!+\!p\rangle+\langle\nabla\! F(\overline{x})u\!-\!\overline{v},z\rangle\Big\}.
  \end{equation} 
  Note that $\mathcal{A}_{\vartheta}(F(\overline{x}),F'(\overline{x})w)\!=\!\big\{\eta\in\partial\vartheta(F(\overline{x}))\,|\,d\vartheta(F(\overline{x}))(F'(\overline{x})w)=\langle \eta,F'(\overline{x})w\rangle\big\}$ is a closed convex set because $\partial\vartheta(F(\overline{x}))\!=\!\widehat{\partial}\vartheta(F(\overline{x}))$. Combining the definition of $\Upsilon$ in \eqref{value-fun} and Lemma \ref{DualProLem} in Appendix leads to the following inequality
  \begin{align*}
  \Upsilon(0)&\ge\!\sup_{\eta\in\mathbb{R}^m}\Big\{\langle \eta,\nabla^2F(\overline{x})(w,w)\rangle\ \ {\rm s.t.}\ \ \nabla F(\overline{x})\eta=\overline{v},\,\eta\in\mathcal{A}_{\vartheta}(F(\overline{x}),F'(\overline{x})w)\Big\}\\  	&=\sup_{\xi\in\Lambda_{\overline{x},\overline{v}}}\langle\xi,\nabla^2F(\overline{x})(w,w)\rangle,
  \end{align*}
  where the equality is due to $\Lambda_{\overline{x},\overline{v}}=\big\{\xi\in\mathbb{R}^m\,|\, \nabla F(\overline{x})\xi=\overline{v},\,\xi\in\mathcal{A}_{\vartheta}(F(\overline{x}),F'(\overline{x})w)\big\}$, and the inequality becomes an equality if $\partial\Upsilon(0)\neq\emptyset$. Hence, to prove that condition \eqref{suff-condition2} holds, we only need to argue that $\partial \Upsilon(0)\neq \emptyset$. Note that the difference between $\Upsilon(0)$ and the  infinimum in \eqref{Uboundineq} is the constant $d^2\vartheta(F(\overline{x}))(dF(\overline{x})(w))$, so $\Upsilon(0)<\infty$. Recall that $\Lambda_{\overline{x},\overline{v}}$ is nonempty. From the above inequality, $\Upsilon(0)>-\infty$. Thus, $\Upsilon(0)$ is finite. By \cite[Proposition 8.32]{RW98}, it suffices to argue that there exist $\varepsilon_1>0$ and $c_1>0$ such that 
  \begin{equation}\label{aimineq-Upsion}
   \Upsilon(p)\geq\Upsilon(0)-c_1\|p\|_2\quad{\rm for\ all}\ p\in\mathbb{R}^m\ {\rm with}\ \|p\|_2\le\varepsilon_1.
  \end{equation}  
  For this purpose, pick any small $\varepsilon>0$. Fix any $p\in\mathbb{R}^m$ with $\|p\|_2\le\varepsilon$. Invoking Proposition \ref{psderive-PTD} (iv) with $\psi=\vartheta$ and $y=F(\overline{x})$ immediately yields that  
  \begin{equation*}
  \Upsilon(p)+d^2\vartheta(F(\overline{x}))(F'(\overline{x})w)
  \!=\!\inf_{z\in\mathbb{X}}\big\{d^2\vartheta(F(\overline{x}))(F'(\overline{x})w\,|\,\nabla^2F(\overline{x})(w,w)+p\!+\!F'(\overline{x})z)-\langle\overline{v},z\rangle\big\}.
  \end{equation*}
  By Lemma \ref{ccone-ffun} (i), $w\in\mathcal{T}_{{\rm dom}\,f}(\overline{x})$, so $F'(\overline{x})w\in\mathcal{T}_{{\rm dom}\,\vartheta}(F(\overline{x}))$ follows Lemma \ref{tcone-domf}. Along with Proposition \ref{ssderive-PTD} (i) for $\psi=\vartheta,y=F(\overline{x})$, we conclude that $d^2\vartheta(F(\overline{x}))(F'(\overline{x})w)$ is finite. 
  If there is no $z\in\mathbb{X}$ such that $\nabla^2 F(\overline{x})(w,w)+p+F'(\overline{x})z\in\mathcal{T}^2_{{\rm dom}\vartheta}(F(\overline{x}),F'(\overline{x})w)$, by Proposition \ref{psderive-PTD} (ii) $d^2\vartheta(F(\overline{x}))(F'(\overline{x})w\,|\,\nabla^2F(\overline{x})(w,w)+p\!+\!F'(\overline{x})\,\cdot)\equiv\infty$, which along with the above equation and the finitness of $d^2\vartheta(F(\overline{x}))(F'(\overline{x})w)$ shows that $\Upsilon(p)=\infty$, and inequality \eqref{aimineq-Upsion} follows by taking $\varepsilon_1=\varepsilon$ and any $c_1>0$. Thus, it suffices to consider that there exists $z\in\mathbb{X}$ such that $\nabla^2 F(\overline{x})(w,w)+p+F'(\overline{x})z\in\mathcal{T}^2_{{\rm dom}\,\vartheta}(F(\overline{x}),F'(\overline{x})w)$, i.e., $\mathcal{S}_{w}(p)\ne\emptyset$, 
  where $\mathcal{S}_w$ is the multifunction defined in Lemma \ref{MSw-lemma} with $\varphi$ and $g$ replaced by $\vartheta$ and $F$, respectively. In other words, it holds that
  \begin{align}\label{Upsion-tempequa0}
  &\Upsilon(p)+d^2\vartheta(F(\overline{x}))(F'(\overline{x})w)\nonumber\\
  &=\inf_{z\in\mathcal{S}_{w}(p)}\!\big\{d^2\vartheta(F(\overline{x}))(F'(\overline{x})w\,|\,\nabla^2F(\overline{x})(w,w)+p\!+\!F'(\overline{x})z)-\langle\overline{v},z\rangle\big\}.
  \end{align}   
  Pick any $z\in\mathcal{S}_{w}(p)$. Let $u_p\!:=\!\nabla^2 F(\overline{x})(w,w)+p+\!F'(\overline{x})z$. Then $u_p\!\in\!\mathcal{T}^2_{{\rm dom}\,\vartheta}(F(\overline{x}),F'(\overline{x})w)$. 
  By Lemma \ref{MSw-lemma}, there exist $z^0\in\mathcal{S}_{w}(0)$ and $b\in \mathbb{B}_{\mathbb{R}^n}$, the unit ball centered at the origin of $\mathbb{R}^n$, such that $z=z^0+\kappa \|p\|_2b$ and $u_0=\nabla^2 F(\overline{x})(w,w)\!+\!F'(\overline{x})z^0\in\mathcal{T}^2_{{\rm dom}\vartheta}(F(\overline{x}),F'(\overline{x})w)$. Recall that $\mathcal{A}(F(\overline{x}),F'(\overline{x})(w))\ne\emptyset$. By Proposition \ref{psderive-PTD} (ii) and (iv), $d^2\vartheta( F(\overline{x}))(F'(\overline{x})w\,|\,\cdot)$ is a finite convex function on the set $\mathcal{T}^2_{{\rm dom}\,\vartheta}(F(\overline{x}),F'(\overline{x})w)$, and hence $d^2\vartheta(F(\overline{x}))(F'(\overline{x})w\,|\,\cdot)$ is strictly continuous relative to $\mathcal{T}^2_{{\rm dom}\,\vartheta}(F(\overline{x}),F'(\overline{x})w)$. Let $L_0$ be the Lipschitz constant of $d^2\vartheta( F(\overline{x}))(F'(\overline{x})w\,|\,\cdot)$ at $u_0$ relative to $\mathcal{T}^2_{{\rm dom}\,\vartheta}(F(\overline{x}),F'(\overline{x})w)$. From $z=z^0+\kappa \|p\|_2b$, if necessary by shrinking $\varepsilon$, it holds that 
  $\big|d^2\vartheta( F(\overline{x}))(F'(\overline{x})w\,|\,u_p)
  -d^2\vartheta( F(\overline{x}))(F'(\overline{x})w\,|\,u_0)\big|
  \le L_0\|u_p-u_0\|_2$.  Hence, 
  \begin{align*}
  &d^2\vartheta( F(\overline{x}))(F'(\overline{x})w\,|\,u_p)-\langle\overline{v},z\rangle
   -d^2\vartheta(F(\overline{x}))(F'(\overline{x})w)\\
  &\ge d^2\vartheta(F(\overline{x}))(F'(\overline{x})w\,|\,u_0)-d^2\vartheta(F(\overline{x}))(F'(\overline{x})w) \\
  &\quad-\langle\overline{v},z^0\rangle-L_0(\|p\|_2+\|F'(\overline{x})\|\|z-z^0\|)-\|\overline{v}\|\|z-z^0\|\\
  &\ge\Upsilon(0)-L_0(\|p\|_2+\kappa\|F'(\overline{x})\|\|p\|_2)-\kappa\|\overline{v}\|\|p\|_2,
  \end{align*}
  which along with \eqref{Upsion-tempequa0} implies that $\Upsilon(p)\ge \Upsilon(0)-[\kappa\|\overline{v}\|+L_0(1\!+\!\kappa\|F'(\overline{x})\|)]\|p\|_2$. By the arbitrariness of $p\in\mathbb{R}^m$ with $\|p\|_{2}\le\varepsilon$, inequality \eqref{aimineq-Upsion} holds with $\varepsilon_1=\varepsilon$ and  $c_1=\kappa\|\overline{v}\|+L_0(1\!+\!\kappa\|F'(\overline{x})\|)$. Thus, we complete the proof. 
 \end{proof}
 \subsection{Composite function $f$ of type IV}\label{sec4.4} 
 \begin{proposition}\label{SDP-theorem}
  Let $f$ be the composite function of type IV. Consider any $\overline{x}\in\mathbb{S}_{-}^n$ and any $\overline{v}\in\partial\!f(\overline{x})$. Then, condition \eqref{suff-condition2} holds, so $f$ is parabolically regular and properly twice epi-differentiable at $\overline{x}$ for $\overline{v}$. 
 \end{proposition}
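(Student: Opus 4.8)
The plan is to follow the template already used for Propositions \ref{scad-prop}, \ref{qcone-prop} and \ref{SmoothResult}: exhibit an explicit multiplier $\overline{\xi}\in\Lambda_{\overline{x},\overline{v}}$ and check that, with this choice, both sides of \eqref{suff-condition2} are computable in closed form. The new feature is that the inner map $\lambda_{\rm max}$ is not differentiable at $\overline{x}$ when its largest eigenvalue is multiple, so the eigenvalue-perturbation calculus must be invoked. First I would note that here $f=\delta_{\mathbb{S}_-^n}$, that $\vartheta=\delta_{\mathbb{R}_-}$ is convex, hence regular at $F(\overline{x})$ with $\partial\vartheta(F(\overline{x}))=\widehat{\partial}\vartheta(F(\overline{x}))$, and that the MSQC for the system $F(x)\in{\rm dom}\,\vartheta$ at $\overline{x}$ holds, since $\lambda_{\rm max}$ is convex and $\lambda_{\rm max}(-I_n)<0$, so the convex inequality system $\lambda_{\rm max}(x)\le 0$ admits a global error bound and its associated multifunction is metrically (sub)regular. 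Consequently Theorem \ref{PropSuff} will yield parabolic regularity and, since $f$ is convex so that $\widehat{\partial}f(\overline{x})=\partial f(\overline{x})$, Theorem \ref{theorem-tepi} will yield proper twice epi-differentiability, once \eqref{suff-condition2} is established.

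Next I would split on $\mu:=\lambda_{\rm max}(\overline{x})$. If $\mu<0$ then $\overline{x}\in{\rm int}\,\mathbb{S}_-^n$, $\partial f(\overline{x})=\mathcal{N}_{\mathbb{S}_-^n}(\overline{x})=\{0\}$, $\overline{v}=0$, $\partial\vartheta(F(\overline{x}))=\{0\}$, so $\Lambda_{\overline{x},\overline{v}}=\mathcal{A}_\vartheta(F(\overline{x}),dF(\overline{x})(w))=\{0\}$ and both sides of \eqref{suff-condition2} vanish for $\overline{\xi}=0$. The remaining case is $\mu=0$, i.e. $\overline{x}\in{\rm bd}\,\mathbb{S}_-^n$. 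Write $\overline{x}=P_\alpha\Lambda_\alpha P_\alpha^\top$ with $\Lambda_\alpha\prec 0$ and let the columns of $P_\beta$ form an orthonormal basis of $\ker\overline{x}$. Here I would recall, from the eigenvalue-optimization literature (\cite{Overton96,Torki99} and \cite[Section~5.3]{BS00}), that $d\lambda_{\rm max}(\overline{x})(w)=\lambda_{\rm max}(P_\beta^\top wP_\beta)$, that $\lambda_{\rm max}$ is parabolically semidifferentiable in the sense of Definition \ref{psemi-deriv} with $F''(\overline{x};w,z)=\lambda_{\rm max}\big(Q_w^\top P_\beta^\top zP_\beta Q_w-2Q_w^\top P_\beta^\top w\,\overline{x}^\dagger wP_\beta Q_w\big)$, where the columns of $Q_w$ span the eigenspace of $P_\beta^\top wP_\beta$ for its largest eigenvalue, and that $\partial\lambda_{\rm max}(\overline{x})=\{P_\beta DP_\beta^\top:D\succeq 0,\ {\rm tr}\,D=1\}$. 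From the chain rule this gives $\partial f(\overline{x})=\{P_\beta UP_\beta^\top:U\succeq 0\}$, so $\overline{v}=P_\beta UP_\beta^\top$ for some $U\succeq 0$; using that $\lambda_{\rm max}$ is the support function of $\{D\succeq 0:{\rm tr}\,D=1\}$ one gets $\Lambda_{\overline{x},\overline{v}}=\{{\rm tr}\,U\}$, and I would take $\overline{\xi}:={\rm tr}\,U$. I would also record $\mathcal{C}_f(\overline{x},\overline{v})=\{w:P_\beta^\top wP_\beta\preceq 0,\ U(P_\beta^\top wP_\beta)=0\}$ and, for $w\in\mathcal{C}_f(\overline{x},\overline{v})$, that $\mathcal{A}_\vartheta(F(\overline{x}),dF(\overline{x})(w))$ equals $\mathbb{R}_+$ or $\{0\}$ according as $dF(\overline{x})(w)$ is $0$ or negative (only the former occurs when $\overline{v}\neq 0$).

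The heart of the proof is to evaluate the two sides of \eqref{suff-condition2} for a fixed $w\in\mathcal{C}_f(\overline{x},\overline{v})$. For the left side, when $\overline{v}\neq 0$ one has $\mathcal{A}_\vartheta=\mathbb{R}_+$, so $\sup_{u\in\mathcal{A}_\vartheta}\langle u,F''(\overline{x};w,z)\rangle$ is $0$ if $F''(\overline{x};w,z)\le 0$ and $+\infty$ otherwise, and hence, by the formula for $F''$, the infimum reduces to the semidefinite program $\inf\{-\langle\overline{v},z\rangle:Q_w^\top P_\beta^\top zP_\beta Q_w\preceq 2Q_w^\top P_\beta^\top w\,\overline{x}^\dagger wP_\beta Q_w\}$, which, after introducing a positive semidefinite slack and using $U\succeq 0$, is solved explicitly. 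For the right side, $d^2(\overline{\xi}F)(\overline{x})(w)=d^2(\overline{\xi}\lambda_{\rm max})(\overline{x})(w)$ is computed from the second-order expansion of $\lambda_{\rm max}$ along the linearly perturbed arcs $w'=w+\tau v$, exploiting $d(\overline{\xi}\lambda_{\rm max})(\overline{x})(w')=\overline{\xi}\lambda_{\rm max}(P_\beta^\top w'P_\beta)$ and the convexity of $\overline{\xi}\lambda_{\rm max}$ (which forces $\Delta^2_\tau(\overline{\xi}\lambda_{\rm max})(\overline{x})(w')\ge 0$) to identify the liminf. Comparing the two values, together with the estimates \eqref{Uboundineq} and \eqref{Lboundineq} already available from the proof of Theorem \ref{PropSuff}, yields \eqref{suff-condition2}, and then Theorems \ref{PropSuff} and \ref{theorem-tepi} finish the argument.

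I expect the matching of the two sides, namely the exact determination of $d^2(\overline{\xi}\lambda_{\rm max})(\overline{x})(w)$ and the verification that it agrees with the value of the semidefinite program on the left, to be the main obstacle: the second subderivative is a liminf over $w'\to w$, and the relevant approach directions are precisely those that split the multiplicity of the zero eigenvalue, so the limit must be controlled along such perturbations rather than along the fixed direction $w$. The supporting technical input, i.e. the validity of the second-order directional expansion of $\lambda_{\rm max}$ along parabolic and linearly perturbed arcs and its parabolic semidifferentiability, would be quoted from \cite{Torki99,Overton96,BS00} rather than reproved.
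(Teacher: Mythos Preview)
Your approach is sound and would work, but the paper's route is markedly shorter because it avoids precisely the step you flag as the ``main obstacle.'' In Case~2.2 ($F(\overline{x})=0$, $dF(\overline{x})(w)=0$, $\mathcal{A}_\vartheta=\mathbb{R}_+$) the paper does not write out the eigenvalue expansion of $F''(\overline{x};w,z)$ or solve an SDP. Instead it observes that the constraint set $\{z:F''(\overline{x};w,z)\le 0\}$ equals $\mathcal{T}^2_{\mathbb{S}^n_-}(\overline{x},w)$ by Proposition~\ref{SOTset} applied with $\varphi=\delta_{\mathbb{R}_-}$ and $g=F$, so the left side of \eqref{suff-condition2} becomes $-\sup_{z\in\mathcal{T}^2_{\mathbb{S}^n_-}(\overline{x},w)}\langle\overline{v},z\rangle$, which is the classical sigma-term $-2\langle\overline{v},w\overline{x}^\dagger w\rangle$ quoted from \cite[p.~487]{BS00}. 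For the right side, the paper takes $\overline{\xi}=\overline{t}$ coming from a decomposition $\overline{v}=\overline{t}\eta$ with $\eta\in\partial F(\overline{x})$ (your $\overline{\xi}={\rm tr}\,U$ is the same number) and quotes $d^2\lambda_{\rm max}(\overline{x})(w)$ directly from \cite[Corollary~5.13]{MohammadiSpectral}, which yields $\overline{t}\,d^2F(\overline{x})(w)=-2\langle\overline{v},w\overline{x}^\dagger w\rangle$ and closes the equality in one line.

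What each approach buys: yours is self-contained modulo the Torki/Overton perturbation formulas and would actually \emph{derive} the second subderivative of $\lambda_{\rm max}$ along the way, but at the cost of a careful liminf analysis over directions that split the eigenvalue multiplicity. The paper's version trades that analysis for two citations (the sigma-term of $\mathbb{S}^n_-$ and \cite{MohammadiSpectral}), plus the internal Proposition~\ref{SOTset} that translates the parabolic-subderivative constraint $F''(\overline{x};w,z)\le 0$ directly into membership in the second-order tangent set. If you want to shorten your argument, that translation is the key shortcut you are missing.
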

 \begin{proof}
  Note that ${\rm dom}\,\vartheta=\mathbb{R}_-$ and $F$ is a convex function. The MSQC holds for system $F(x)\in{\rm dom}\,\vartheta$ at $\overline{x}$ because the Slater CQ holds. Fix any $w\in\mathcal{C}_{\!f}(\overline{x},\overline{v})$. By Lemma \ref{ccone-ffun} (i), $w\in\mathcal{T}_{{\rm dom}\,f}(\overline{x})$ and $d\vartheta(F(\overline{x}))(dF(\overline{x})(w))=\langle\overline{v},w\rangle$. The latter means that $dF(\overline{x})(w)\in{\rm dom}\,d\vartheta(F(\overline{x}))= \mathcal{T}_{\mathbb{R}_-}(F(\overline{x}))$, which by \cite[Theorem 8.2 (b)]{RW98} implies that 
  \begin{equation}\label{NconeCri}
  0=\delta_{\mathcal{T}_{\mathbb{R}_-}(F(\overline{x}))}(dF(\overline{x})(w))=d\vartheta(F(\overline{x}))(dF(\overline{x})(w))=\langle\overline{v},w\rangle.
  \end{equation}
  We proceed the arguments by the following two cases: $F(\overline{x})<0$ and $F(\overline{x})=0$. 
  
  \noindent	
  {\bf Case 1: $F(\overline{x})<0$.} Now  $\partial\! f(\overline{x})=\mathcal{N}_{\mathbb{S}^n_-}(\overline{x})=\{0\}$. Hence,  $\overline{v}=0$ and $\partial \vartheta(F(\overline{x}))=\{0\}$, which implies that $\Lambda_{\overline{x},\overline{v}}=\{0\}$ and $\mathcal{A}_{\vartheta}(F(\overline{x}),dF(\overline{x})(w))=\{0\}$.
  Consequently,  
  \begin{equation*}
  \!\inf_{z\in\mathbb{R}^n}\!\Big\{\sup_{u\in\mathcal{A}_{\vartheta}(F(\overline{x}),dF(\overline{x})(w))}\!\langle u,F''(\overline{x};w,z)\rangle-\langle\overline{v},z\rangle\Big\}=d^2(\overline{\xi}F)(\overline{x})(w)=0\ \ {\rm for}\ \overline{\xi}=0.
 \end{equation*}
  This shows that condition \eqref{suff-condition2} holds.
  
 \noindent	
 {\bf Case 2: $F(\overline{x})=0$.} Now $\partial \vartheta(F(\overline{x}))=\mathbb{R}_+$. From the above \eqref{NconeCri}, it follows that 
 \[
   \mathcal{A}_{\vartheta}(F(\overline{x}),dF(\overline{x})(w))=\big\{u\in\mathbb{R}_{+}\,|\, udF(\overline{x})(w)=0\big\}.
 \]
 By \cite[Theorem 10.49]{RW98}, $\partial\!f(\overline{x})=\{v\in t \partial F(\overline{x}) \,|\, t\in \mathbb{R}_+\}$.
Since $\overline{v}\in \partial f(\overline{x})$, there exist $\overline{t}\in\mathbb{R}_+$ and $\eta\in\partial F(\overline{x})$ such that $\overline{v}=\overline{t} \eta$.
 From $\eta\in\partial F(\overline{x})$ and the equivalence in \eqref{Rsdiff-sderiv}, $dF(\overline{x})(w)\ge \langle \eta,w\rangle$. Together with  $dF(\overline{x})(w)\in \mathcal{T}_{\mathbb{R}_-}(F(\overline{x}))=\mathbb{R}_{-}$ and \eqref{NconeCri}, we have $0\ge \overline{t}dF(\overline{x})(w)\ge \overline{t}\langle \eta, w\rangle=\langle\overline{v},w\rangle=0$, which implies that $\overline{t}dF(\overline{x})(w)=0$.
 
 \noindent
 {\bf Case 2.1: $dF(\overline{x})(w))\neq 0$.} Now $\mathcal{A}_{\vartheta}(F(\overline{x}),dF(\overline{x})(w))=\{0\}$ and $\overline{t}=0$, and the latter implies that $\overline{v}=0$. By \cite[Proposition 2.61]{BS00}, $dF(\overline{x})(w')\leq 0$ for all $w'\in \mathbb{S}^n$, so $\Lambda_{\overline{x},\overline{v}}=\{0\}$. Now using the same arguments as for Case 1 shows that condition \eqref{suff-condition2} holds.

 \noindent
 {\bf Case 2.2: $dF(\overline{x})(w))=0$.}
  Now $\mathcal{A}_{\vartheta}(F(\overline{x}),dF(\overline{x})(w))=\mathbb{R}_+$.   From $\eta\in\partial F(\overline{x})$ and the equivalence in \eqref{Rsdiff-sderiv}, $dF(\overline{x})(w')\ge \langle \eta,w'\rangle$ for any $w'\in\mathbb{S}^n$. Hence, $\overline{t}\in \Lambda_{\overline{x},\overline{v}}$. Notice that
  \begin{align*}
    &\!\inf_{z\in\mathbb{R}^n}\!\Big\{\sup_{u\in\mathcal{A}_{\vartheta}(F(\overline{x}),dF(\overline{x})(w))}\langle u,F''(\overline{x};w,z)\rangle-\langle\overline{v},z\rangle\Big\} \\
   &=\!\inf_{z\in\mathbb{R}^n}\!\Big\{\sup_{u\geq  0}\langle u,F''(\overline{x};w,z)\rangle-\langle\overline{v},z\rangle\Big\}
   =\!\inf_{z\in\mathbb{R}^n}\!\big\{-\!\langle\overline{v},z\rangle\,|\, F''(\overline{x};w,z)\leq 0\big\}\\
   &=-\sup_{z\in \mathcal{T}^2_{\mathbb{S}^n_-}(\overline{x},w)}\langle \overline{v}, z\rangle=-2\langle \overline{v},w\overline{x}^{\dagger}w\rangle=\overline{t} d^2F(\overline{x})(w),
  \end{align*}
  where the third equality is obtained by using Proposition \ref{SOTset} with $\varphi=\delta_{\mathbb{R}_{-}}$ and $g=F$, the fourth one is by \cite[Page 487]{BS00}, and the fifth one is by \cite[Corollary 5.13]{MohammadiSpectral}.
 This shows that condition \eqref{suff-condition2} holds for this case. The proof is completed.
 \end{proof}

%
%


\appendix
 \section{}
 \numberwithin{remark}{section}
 \begin{lemma}\label{DualProLem}
  Let $\mathcal{B}:\mathbb{X}\to \mathbb{R}^m$ be a linear mapping, and let $c\in \mathbb{R}^m$ and $\overline{v}\in \mathbb{X}$ be the given vectors. For every $p\in \mathbb{R}^m$, define
  \(	
  \upsilon(p)\!:=\inf_{z\in\mathbb{X}}\sup_{u\in \Omega}\big\{\langle u,\mathcal{B}z+c+p\rangle-\langle\overline{v},z\rangle\big\}
  \) 
  where $\Omega\subset\mathbb{R}^m$ is a closed convex set. Then, 
  \(
  \upsilon(0)\ge\sup_{u\in\Omega}\{\langle \xi,c\rangle\ {\rm s.t.}\ \mathcal{B}^*u=\overline{v}\}
 \) 
 and the equality holds if $\partial\upsilon(0)\ne\emptyset$, where $\mathcal{B}^*\!:\mathbb{R}^m\to\mathbb{X}$ is the adjoint of $\mathcal{B}$.
 \end{lemma}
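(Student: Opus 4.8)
The plan is to recognize $\upsilon$ as a convex marginal function and to read the assertion off as the coincidence of $\upsilon(0)$ with its biconjugate $\upsilon^{**}(0)$. Throughout one may assume $\Omega\neq\emptyset$, since otherwise $\upsilon\equiv-\infty$, the right-hand side is a supremum over an empty set, both are $-\infty$, and $\partial\upsilon(0)=\emptyset$. Writing $\sigma:=\delta_{\Omega}^{*}$ for the support function of $\Omega$, one has for every $p\in\mathbb{R}^m$
\begin{equation*}
\upsilon(p)=\inf_{z\in\mathbb{X}}\big\{\sigma(\mathcal{B}z+c+p)-\langle\overline{v},z\rangle\big\},
\end{equation*}
and, since $\sigma$ is convex and $(z,p)\mapsto\mathcal{B}z+c+p$ is affine, the function $(z,p)\mapsto\sigma(\mathcal{B}z+c+p)-\langle\overline{v},z\rangle$ is jointly convex; hence its infimal projection onto the $p$-variable, namely $\upsilon$, is convex on $\mathbb{R}^m$.

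The first computation I would carry out is that of the conjugate $\upsilon^{*}$. Pulling the inner infimum out via $-\inf_{z}G(z,p)=\sup_{z}(-G(z,p))$ and then performing the affine change of variable $q:=\mathcal{B}z+c+p$ (a bijection of $p$ for each fixed $z$), the objective separates and one obtains
\begin{equation*}
\upsilon^{*}(w)=\sup_{q\in\mathbb{R}^m}\big\{\langle w,q\rangle-\sigma(q)\big\}+\sup_{z\in\mathbb{X}}\big\{\langle\overline{v}-\mathcal{B}^{*}w,z\rangle\big\}-\langle w,c\rangle.
\end{equation*}
By the Fenchel--Moreau theorem applied to the nonempty closed convex set $\Omega$, the first term equals $\sigma^{*}(w)=\delta_{\Omega}^{**}(w)=\delta_{\Omega}(w)$, while the second term equals $0$ if $\mathcal{B}^{*}w=\overline{v}$ and $+\infty$ otherwise. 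Hence $\upsilon^{*}(w)=-\langle w,c\rangle$ when $w\in\Omega$ and $\mathcal{B}^{*}w=\overline{v}$, and $\upsilon^{*}(w)=+\infty$ otherwise, so that
\begin{equation*}
\upsilon^{**}(0)=\sup_{w\in\mathbb{R}^m}\big\{-\upsilon^{*}(w)\big\}=\sup_{u\in\Omega}\big\{\langle u,c\rangle\ \ {\rm s.t.}\ \ \mathcal{B}^{*}u=\overline{v}\big\},
\end{equation*}
which is precisely the right-hand side of the asserted relation.

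It then remains to compare $\upsilon(0)$ with $\upsilon^{**}(0)$. The inequality $\upsilon(0)\ge\upsilon^{**}(0)$ is automatic, the biconjugate being a minorant of any function, and this already yields the claimed estimate. For the equality under $\partial\upsilon(0)\neq\emptyset$, I would pick $v\in\partial\upsilon(0)$; since $\upsilon$ is convex, $v$ satisfies the global subgradient inequality $\upsilon(p)\ge\upsilon(0)+\langle v,p\rangle$ for all $p$ (in particular $\upsilon(0)$ is finite), whence $\upsilon^{*}(v)=\sup_{p}\{\langle v,p\rangle-\upsilon(p)\}\le-\upsilon(0)$, while $\upsilon^{*}(v)\ge-\upsilon(0)$ trivially; thus $\upsilon^{*}(v)=-\upsilon(0)$ and $\upsilon^{**}(0)\ge-\upsilon^{*}(v)=\upsilon(0)$, forcing $\upsilon(0)=\upsilon^{**}(0)$ and hence equality in the lemma. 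I do not anticipate a genuine obstacle here: the work is entirely bookkeeping in the conjugate computation (the affine substitution and the decoupling of the two suprema) together with the correct invocation of Fenchel--Moreau for the support function, and the degenerate cases $\Omega=\emptyset$ and $\upsilon(0)=-\infty$ are disposed of along the way.
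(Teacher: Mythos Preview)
Your argument is correct and essentially the same as the paper's: both identify the right-hand side as the dual/biconjugate value by computing the relevant conjugate (the paper works with $\Psi^*(0,\eta)$ for the two-variable perturbation function, which coincides with your $\upsilon^*(\eta)$ since $\upsilon$ is the infimal projection of $\Psi$), and then invoke weak duality for the inequality and a subdifferential criterion for equality. The only difference is cosmetic: the paper cites \cite[Theorem 2.142(i)]{BS00} for the equality case, whereas you give the one-line Fenchel--Young argument $\upsilon^*(v)=-\upsilon(0)$ directly from the subgradient inequality; your version is thus self-contained but not a different route.
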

 \begin{proof}
  For any $(z,p)\in\mathbb{X}\times\mathbb{R}^m$, define $\Psi(z,p):=\sup_{u\in \Omega}\{\langle u,\mathcal{B}z+c+p\rangle-\langle\overline{v},z\rangle\}$. Then, $\upsilon(p)=\inf_{z\in\mathbb{X}}\Psi(z,p)$, the value function of the perturbation of the convex program
  \begin{equation}\label{primal}
  \upsilon(0)=\inf_{z\in\mathbb{X}}\sup_{u\in \Omega}\big\{\langle u,\mathcal{B}z+c\rangle-\langle\overline{v},z\rangle\big\}. 
  \end{equation}
  From \cite[Section 2.5.1]{BS00}, the dual problem of \eqref{primal} takes the following form 
  \begin{equation}\label{dual}
   \sup_{\eta\in \mathbb{R}^m}\{-\Psi^*(0,\eta)\}. 
  \end{equation}  
  By the definition of conjugate function, it is not difficult to calculate that
  \begin{align*}
  \Psi^*(0,\eta)
  &=\sup_{(z,p)\in\mathbb{X}\times\mathbb{R}^m}\Big\{\langle \eta,p\rangle+\langle\overline{v},z\rangle- \sup\limits_{u\in \mathbb{X}}\{\langle u,\mathcal{B}z+c+p\rangle-\delta_{\Omega}(u)\}\Big\}\nonumber\\
  &=\sup_{(z,p)\in\mathbb{X}\times\mathbb{R}^m}\Big\{\langle \eta,p\rangle+\langle\overline{v},z\rangle-
  		\delta^*_{\Omega}(\mathcal{B}z+c+p)  \Big\}\nonumber\\
  &=\sup_{z\in\mathbb{X}}\Big\{\langle\overline{v},z \rangle+\sup\limits_{p'\in\mathbb{R}^m}\{\langle \eta,p'-c-\mathcal{B}z\rangle-
  		\delta^*_{\Omega}(p')\} \Big\}\nonumber\\
 &=\sup\limits_{z\in\mathbb{X}}\{\langle\overline{v}-\mathcal{B}^*\eta,z \rangle \}-\langle \eta,c\rangle+\delta_{\Omega}(\eta)
 =\delta_{\{0\}}(\mathcal{B}^*\eta-\overline{v})-\langle \eta,c\rangle+ 	\delta_{\Omega}(\eta).
 \end{align*}
 Then, from the weak duality theorem, $\upsilon(0)\ge\sup_{u\in\mathbb{R}^m}\{\langle u,c\rangle\ {\rm s.t.}\ \mathcal{B}^* u=\overline{v},\,u\in\Omega\}$. The inequality becomes an equality when there is no dual gap between \eqref{primal} and \eqref{dual}, 
  	which is guaranteed to hold under $\partial\upsilon(0)\ne\emptyset$ by \cite[Theorem 2.142 (i)]{BS00}. 
 \end{proof}

\begin{thebibliography}{plain}

\bibitem{RW98}
Rockafellar, R. T., Wets. R. J. B.:
Variational Analysis. Springer, New York (2009)

\bibitem{BS00}
Bonnans, J. F., Shapiro, A.: Perturbation Analysis of Optimization Problems. Springer, New York (2000)

  \bibitem{Song15}
 Kan, C., Song, W.: Second-order conditions for existence of augmented {L}agrange multipliers for eigenvalue composite optimization problems.
 J. Global Optim. 63, 77-97 (2015)
 
 \bibitem{Overton96}
 Lewis, A. S. Overton, M. L.: Eigenvalue optimization.
 Acta. Numer. 5, 149-190 (1996)
 
 \bibitem{Torki99}
 Torki, M.: First-and second-order epi-differentiability in eigenvalue optimization.
 J. Math. Anal. Appl. 234, 391-416 (1999)


\bibitem{Roc89}
Rockafellar, R. T.: Second-order optimality conditions in nonlinear programming obtained by way of epi-derivatives. 
Math. Oper. Res. 14, 462-484 (1989)

\bibitem{Balas18}
Balas E.: Disjunctive Programming. Springer, New York (2018)

\bibitem{Gfrerer14}
Gfrerer, H.:
Optimality conditions for disjunctive programs based on generalized differentiation with application to mathematical programs with equilibrium constraints.
SIAM J. Optim. 24, 898-931 (2014)

\bibitem{Fan01}
Fan, J. Q., Li R.:
Variable selection via nonconcave penalized likelihood and its oracle properties.
J. Am. Stat. Assoc. 96, 1348-1360 (2001)

\bibitem{Zhang10}
Zhang, C. H.: Nearly unbiased variable selection under minimax concave penalty. 
Ann. Stat. 2, 894-942 (2010)

\bibitem{Guo15}
Guo, X., Zhang, H., Wang, Yao., Wu J. L.: Model selection and estimation in high dimensional regression models with group SCAD. Stat. Probabil. Lett. 103,
86-92 (2015)

\bibitem{Ogutu14}
Ogutu, J.O.,Piepho, H.P.: Regularized group regression methods for genomic prediction: Bridge, MCP, SCAD, group bridge, group lasso, sparse group lasso, group MCP and group SCAD. 
BMC proceedings, BioMed Central (2014)

\bibitem{LiuBiPan18}
Liu, Y. L., Bi, S. J., Pan, S. H.:
Equivalent {L}ipschitz surrogates for zero-norm and rank optimization problems.
J. Global Optim. 72, 679-704 (2018)

\bibitem{Aragon14}
Artacho, F. J. A.,  Geoffroy M. H. :
Metric subregularity of the convex subdifferential in Banach spaces.
J. Nonlinear Convex A. 15, 35-47 (2014)

\bibitem{Gfrerer11}
Gfrerer, H.: First order and second order characterizations of metric subregularity and calmness of constraint set mappings.
SIAM J. Optim. 21, 1439-1474 (2011)

\bibitem{Gfrerer16}
Gfrerer, H., Outrata J. V.:
On computation of generalized derivatives of the normal-cone mapping and their applications. 
Math. Oper. Res. 41, 1535-1556 (2016)

 \bibitem{Ioffe08}
Ioffe, A. D., Outrata J. V.: 
On metric and calmness qualification conditions in subdifferential calculus.
Set-valued Anal. 16, 199-227 (2008)

\bibitem{Kruger}
Kruger, A. Y. : Error Bounds and {H}\"{o}lder Metric Subregularity.
Set-valued Anal. 23, 705-736 (2015)

\bibitem{Li12}
Li, G. Y., Mordukhovich, B. S.: H\"{o}lder metric subregularity with applications to proximal point method.
SIAM J. Optim. 22, 1655-1684 (2012)

\bibitem{Ye97}
Ye, J. J., Y. X. Y.:
Necessary optimality conditions for optimization problems with variational inequality constraints:
Math. Oper. Res. 22, 977-997 (1997)

\bibitem{Zheng10}
Zheng, X. Y., Ng, K. F.:
Metric subregularity and calmness for nonconvex generalized equations in Banach spaces.
SIAM J. Optim. 20, 2119-2136 (2010)

\bibitem{MohMS22-MOR}
Mohammadi, A., Mordukhovich, B. S., Sarabi, M.: Variational analysis of composite models with applications to continuous optimization.
Math. Oper. Res. 47, 397-426 (2022)

\bibitem{Cominetti91}
Cominetti,  R.: On pseudo-differentiability.
T. Am. Math. Soc. 324, 843-865 (1991)

\bibitem{Ioffe91}
Ioffe, A D. : Variational analysis of a composite function: A formula for the lower second order epi-derivative.
J. Math. Anal. Appl. 160, 379-405 (1991)

\bibitem{Levy01}
Levy, A. B. :Second-order epi-derivatives of composite functionals. Ann. Oper. Res. 101, 267-281 (2001)

\bibitem{MohMS21-TAMS}
Mohammadi, A., Mordukhovich, B. S., Sarabi, M.: Parabolic regularity in geometric variational analysis.
T. Am. Math. Soc. 374, 1711-1763 (2021)

\bibitem{MohS20-SIOPT}
Mohammadi, A., Sarabi, M.:
Twice epi-differentiability of extended-real-valued functions with applications in composite optimization.
SIAM J. Optim. 30, 2379-2409 (2020)

\bibitem{Benko22Arxiv}
Benko, M.,  Mehlitz, P.:
Why second-order sufficient conditions are, in a way, easy--or--revisiting calculus for second subderivatives.
arXiv preprint arXiv:2206.03918 (2022)

\bibitem{Moh22-archive}
Mohammadi, A.:
First-order variational analysis of non-amenable composite functions.
arXiv preprint arXiv:2204.01191 (2022)

\bibitem{Mordu18}
Mordukhovich B. S.:Variational Analysis and Applications. Springer, New York (2018)

\bibitem{Benko22}
Benko, M., Gfrerer, H., Ye, J. J., Zhang, J.,  Zhou, J.: Second-order optimality conditions for general nonconvex optimization problems and variational analysis of disjunctive systems. SIAM J. Optim. 33, 2625-2653 (2023).

\bibitem{Gfrerer13}
Gfrerer, H.:
On directional metric regularity, subregularity and optimality conditions for nonsmooth mathematical programs.
Set-valued Anal. 21, 151-176 (2013)


\bibitem{Gfrerer16-SIOPT}
Gfrerer, H., Outrata, J. V.: On {L}ipschitzian properties of implicit multifunctions.
SIAM J. Optim. 26, 2160-2189 (2016)

\bibitem{Herion05}
Henrion, R. Outrata, J. V.:Calmness of constraint systems with applications.
Math. Program. 104, 437-464 (2005)




\bibitem{Mehlitz21}
Mehlitz, P.: On the linear independence constraint qualification in disjunctive programming.
Optimization 69, 2241-2277 (2020)

%

\bibitem{Aze98}
Aze, D., Poliquin, R. A.:
Equicalmness and epiderivative that are pointwise limits. J. Optim. Theory App.
3, 555-573 (1998)

\bibitem{Xue00}, 
 Xue, G. L., Ye, Y. Y. :
 An efficient algorithm for minimizing a sum of p-norms. 
 SIAM J. Optim. 10, 551-579 (2000)
	
\bibitem{Andersen02}
 Andersen, E. D., Roos, C., Terlaky, T. :
 Notes on duality in second order and p-order cone optimization (2002)


\bibitem{BS2005}
Bonnans, J. F., Ram{\'\i}rez, C.  H{\'e}ctor:
Perturbation analysis of second-order cone programming problems. Math. Program. 104, 205-227 (2005)

\bibitem{MohammadiSpectral}
Mohammadi, A., Sarabi, M. :
Parabolic regulartiy of spectral functions.
Arxiv preprint arxiv: 2301.04240v2 (2023)









\end{thebibliography}
\end{document}